\documentclass{amsart}
\usepackage{amsmath,amssymb}
\usepackage{xypic} 

\newcommand{\Sym}{\mathfrak{S}}
\newcommand{\isom}{\xrightarrow{\sim}}
\newcommand{\inv}{^{-1}}

\newcommand{\dia}{\diamond}
\newcommand{\Cay}{\mathbb{O}}
\newcommand{\gmu}{\boldsymbol{\mu}}
\newcommand{\op}{\text{\textrm op}}
\newcommand{\gZ}{\mathbf{Z}}

\DeclareMathOperator{\Srd}{Srd}
\DeclareMathOperator{\charac}{char}
\DeclareMathOperator{\gGm}{\mathbf{G}_m}
\DeclareMathOperator{\gGO}{\mathbf{GO}}
\DeclareMathOperator{\gPGL}{\mathbf{PGL}}
\DeclareMathOperator{\gPGO}{\mathbf{PGO}}
\DeclareMathOperator{\gPGU}{\mathbf{PGU}}
\DeclareMathOperator{\gAut}{\mathbf{Aut}}
\DeclareMathOperator{\gInt}{\mathbf{Int}}

\DeclareMathOperator{\gH}{\mathbf{H}}
\DeclareMathOperator{\gSpin}{\mathbf{Spin}}

\DeclareMathOperator{\Nrd}{Nrd}

\DeclareMathOperator{\Orth}{O}

\DeclareMathOperator{\Spin}{Spin}

\DeclareMathOperator{\Id}{Id}

\DeclareMathOperator{\Int}{Int}

\DeclareMathOperator{\om}{\omega}
 
\DeclareMathOperator{\End}{End}
\DeclareMathOperator{\ad}{ad}
\DeclareMathOperator{\Ker}{Ker} 

\DeclareMathOperator{\SSym}{Sym} 
\DeclareMathOperator{\Aut}{Aut} 
\DeclareMathOperator{\Trd}{Trd} 
\DeclareMathOperator{\GO}{GO}
\DeclareMathOperator{\PGO}{PGO}

\theoremstyle{plain} %
\numberwithin{equation}{section}

\setcounter{secnumdepth}{1}

\newtheorem{theorem}[equation]{Theorem}
\newtheorem{thm}[equation]{Theorem}

\newtheorem{prop}[equation]{Proposition}
\newtheorem{proposition}[equation]{Proposition}

\newtheorem{cor}[equation]{Corollary}
\newtheorem{lemma}[equation]{Lemma}
\theoremstyle{definition} %
\numberwithin{equation}{section}
\newtheorem{example}[equation]{Example}

\newtheorem{defn}[equation]{Definition}
\newtheorem{defns}[equation]{Definitions}

\dedicatory{Dedicated with great friendship to Eva Bayer on the
  occasion of her $60^{th}$ birthday}

\title[Conjugacy classes of trialitarian automorphisms]{Conjugacy
  classes of trialitarian automorphisms and symmetric compositions} 
\author{V. Chernousov \and M.-A. Knus \and J.-P. Tignol}
\date{June 25, 2011}
\address{Department of Mathematical and Statistical Sciences\\
University of Alberta\\
632 Central Academic Building\\
Edmonton, AB  T6G 2G1, Canada}
\email{chernous@math.ualberta.ca}
\address{Department Mathematik\\
  ETH Zentrum\\
  CH-8092 Z\"urich\\
  Switzerland} \email{knus@math.ethz.ch}
\address{ICTEAM Institute,
  Universit\'e catholique de Louvain\\
  B-1348 Louvain-la-Neuve, Belgium\\
  and Zukunftskolleg, Universit\"at Konstanz\\
  D-78457 Konstanz, Germany} \email{jean-pierre.tignol@uclouvain.be}
\thanks{The first author 
  was partially supported by
the Canada Research Chairs Program and an~NSERC research grant, the third author
 by the~F.R.S.--FNRS (Belgium)}

\subjclass[2010]{20G15, 11E88}
\keywords{Triality, composition algebras, octonions.}

\begin{document}
\maketitle

\begin{abstract}
  The trialitarian automorphisms considered in this paper are the
  outer automorphisms of order~3 of adjoint classical groups of
  type~$\mathrm{D}_4$ over arbitrary fields. A one-to-one
  correspondence is established 
  between their conjugacy classes and similarity classes of symmetric
  compositions on $8$-dimensional quadratic spaces. Using the known
  classification of symmetric compositions, we distinguish two
  conjugacy classes of trialitarian automorphisms over algebraically
  closed fields. For type~I, the group of fixed points is of
  type~$\mathrm{G}_2$, whereas it is of type~$\mathrm{A}_2$ for
  trialitarian automorphisms of type~II.
\end{abstract}

\section{Introduction}
Among simple algebraic groups of classical type only the simple
adjoint algebraic groups $G = \gPGO^+(n)$ and the simple simply
connected algebraic groups $G = \gSpin(n)$, where $n$ is the norm of
an octonion algebra, admit outer automorphisms of order $3$, known as
trialitarian automorphisms, see \cite[(42.7)]{KMRT} or \cite{jactri}.
These groups are of type $\mathrm{D}_4$ and there is a split exact sequence of
algebraic groups
\begin{equation}
\label{eq:exactseq}
  1 \to \gInt(G) \to \gAut(G)
   \to \Sym_3 \to 1
\end{equation}
where the permutation group of three elements $\Sym_3$ is viewed as
the group of automorphisms of the Dynkin diagram of type $\mathrm{D}_4$:
\[
\begin{picture}(100,50)
\put(40,25){\circle{5}}
\put(35,15){}
\put(42.5,25){\line(1,0){15}}
\put(60,25){\circle{5}}
\put(52,15){}
\put(71,42){\circle{5}}
\put(76,41){}
\put(71,8){\circle{5}}
\put(76,5){}
\put(62,27){\line(3,5){7.5}}
\put(62,23){\line(3,-5){7.5}}
\end{picture}
\]

In view of the exact sequence \eqref{eq:exactseq}, all the
trialitarian automorphisms of $G$ can be obtained from a fixed one
$\rho$ by composing $\rho$ or $\rho^{-1}$ with inner
automorphisms. They are not necessarily conjugate to $\rho$,
however. Our goal is to classify trialitarian automorphisms defined
over an arbitrary field $F$ up to conjugation in the group
$\gAut(G)(F)$ of $F$-automorphisms of $G$. We achieve this goal by
relating trialitarian automorphisms to symmetric compositions and
using the known classification of composition algebras.

We take as guiding principle the analogous description of outer
automorphisms of order~$2$ of $\gPGL_n$ for $n\geq3$, which could be
termed \emph{dualitarian automorphisms}: they have the form $f\mapsto
\sigma(f)^{-1}$, where $\sigma$ is the adjoint involution of a
nonsingular symmetric or skew-symmetric bilinear form. Dualitarian
automorphisms on $\gPGL_n$ are 
thus in one-to-one correspondence with nonsingular symmetric or
skew-symmetric bilinear forms on 
an $n$-dimensional vector space up to scalar multiples, and
dualitarian automorphisms are conjugate if and only if the
corresponding bilinear forms are similar. There are two types of
dualitarian automorphisms, distinguished by the type of their groups
of fixed points, which can be either symplectic or (in characteristic
different from~$2$) orthogonal. In characteristic~$2$, the
non-symplectic case leads to group schemes that are not smooth.

Likewise, we set up a one-to-one correspondence between trialitarian
automorphisms of $\gPGO^+(n)$, for $n$ a $3$-fold Pfister quadratic
form, and symmetric compositions up to scalar multiples on the
underlying vector space of $n$, and use it to define a bijection
between conjugacy classes of trialitarian automorphisms and similarity
classes of symmetric compositions. We isolate two types of symmetric
compositions. Type~I is related to octonion algebras; the fixed
subgroups are of type~$\mathrm{G}_2$. When the characteristic is not~$3$,
type~II is related to central simple algebras of degree~$3$; the fixed
subgroups are of type~${}^1\!\mathrm{A}_2$ or ${}^2\!\mathrm{A}_2$. In
characteristic~$3$, the fixed subgroups under trialitarian
automorphisms of type~II are not smooth.

Triality for simple Lie groups first appears in the paper
\cite{Cartan25} of  \'E. Cartan, who already noticed that octonions
can be used to explicitly define trialitarian automorphisms.\footnote{  
We refer to \cite[pp.~510--511]{KMRT} and
\cite[\S3.8]{springerveldkamp} for historical comments on 
triality.} The observation that symmetric compositions are
particularly well suited for that purpose is due to M.~Rost.

As far as we know a complete classification of trialitarian
automorphisms  of simple groups of type $\mathrm{D}_4$ had only been obtained
over finite fields, in \cite[(9.1)]{gorensteinlyons:83}, and in 
\cite[Theorem~(4.7.1)]{gorensteinlyonssolomon:98} for the algebraic
closure of a finite field of characteristic different from $3$. A summary of
known  results for Lie algebras can be found in \cite{knus09}. 
Like for duality, there is also a projective geometric version of triality,
which is in fact older than Cartan's triality and goes back to
Study \cite{study:13}. A classification of geometric trialities, as well as
of  their groups of automorphisms,
was done by Tits  in \cite{Tits59}.

\medskip

If not explicitly mentioned $F$ denotes throughout the paper an arbitrary field.

\section{Similarities of quadratic spaces}

A quadratic space over $F$ is a finite-dimensional vector space $V$
over $F$ with a quadratic form $q\colon V\to F$. We always assume that
$q$ is nonsingular, in the sense that the \emph{polar bilinear form} $b_q$
defined by
\[
b_q(x,y)=q(x+y)-q(x)-q(y)\qquad\text{for $x$, $y\in V$}
\]
has radical $\{0\}$. We also assume throughout that $\dim V$ is \emph{even}.
Let $\ad_q$ denote the
involution on $\End_FV$ such that
\[
b_q\big(f (x), y ) = b_q(x, \ad_{q}(f)(y)\big)\qquad\text{for all
  $f\in\End_FV$ and $x$, $y\in V$.}
\]
This involution is said to be \emph{adjoint} to $q$. Let $\gGO(q)$ be
the $F$-algebraic group of similarities of $(V,q)$, whose group of
rational points $\gGO(q)(F)$ consists of linear maps $f\colon V\to V$
for which there exists a scalar $\mu(f)\in F^\times$, called the
\emph{multiplier} of $f$, such that
\[
q\bigl(f(x)\bigr)=\mu(f)q(x)\qquad\text{for all $x\in V$.}
\]
The center of $\gGO(q)$ is the multiplicative group $\gGm$,
whose rational points are viewed as homotheties. Let $\gPGO(q)$ be the
$F$-algebraic group of automorphisms of $\End_FV$ that commute with
the adjoint involution $\ad_q$. This group is identified with the
quotient $\gGO(q)/\gGm$, acting on $\End_FV$ by inner automorphisms:
for $f\in\gGO(q)(F)$, we let $[f]$ be the image of $f$ in
$\gGO(q)(F)/\gGm(F)$ and identify $[f]$ with
\[
\Int[f]\colon \End_F V\to \End_F V, \qquad \phi\mapsto f\phi f^{-1},
\]
see \cite[\S23]{KMRT}. For simplicity, we write
\[
\GO(q)=\gGO(q)(F)\qquad\text{and}\qquad
\PGO(q)=\gPGO(q)(F)=\GO(q)/F^\times.
\]

Let $C(V,q)$ be the Clifford algebra of the quadratic space $(V,q)$ and let 
$C_0(V,q)$ be the even Clifford algebra.  We let $\sigma$ be the
canonical involution of $C(V,q)$, such that $\sigma(x)=x$ for $x\in
V$, and use the same notation for its restriction to $C_0(V,q)$. 
Every similarity $f\in\GO(q)$ induces an automorphism $C_0(f)$ of
$(C_0(V,q),\sigma)$ such that 
\begin{equation} 
  \label{equ:simactionCo}
  C_0(f)(xy)=\mu(f)^{-1}f(x)f(y)\qquad\text{for $x$, $y\in V$,}
\end{equation}
see \cite[(13.1)]{KMRT}.
This automorphism depends only on the image $[f]=fF^\times$ of $f$ in
$\PGO(q)$, and we shall use the notation $C_0[f]$ for $C_0(f)$.
The similarity $f$ is \emph{proper} if
$C_0[f]$ fixes the center of $C_0(V,q)$ and 
\emph{improper} if it induces a nontrivial automorphism of
the center of $C_0(V,q)$ (see \cite[(13.2)]{KMRT}).
Proper similarities define an algebraic subgroup $\gGO^+(q)$ in
$\gGO(q)$, and we let $\gPGO^+(q)=\gGO^+(q)/\gGm$, a subgroup of
$\gPGO(q)$. The  groups $\gGO^+(q)$ and $\gPGO^+(q)$ are the connected
components of the identity in $\gGO(q)$ and $\gPGO(q)$ respectively,
see~\cite[\S23.B]{KMRT}. Conjugation by an improper similarity is an
outer automorphism of $\gPGO^+(q)$, since the induced automorphism on
the center of $C_0(V,q)$ is nontrivial. As pointed out in the
introduction, more outer automorphisms can be defined when the form
$q$ is the norm of an octonion algebra, i.e., a $3$-fold Pfister
form. In this case, we call the quadratic space a \emph{$3$-fold
  Pfister quadratic space}.

\section{Symmetric compositions}

Let $(S,n)$ be a quadratic space of dimension~$8$ over $F$.

\begin{defn} \label{def:symcomp}
A \emph{symmetric composition} on $(S,n)$ is
 an $F$-bilinear map
\[
\star\colon S\times S\to S, \qquad (x,y)\mapsto x\star y\quad\text{for
  $x$, $y\in S$}
\]
subject to the following conditions:
\begin{enumerate} \label{enu:defsym}
\item[(1)]
  there exists $\lambda_\star\in F^\times$, called the
  \emph{multiplier} of the symmetric composition~$\star$, such that
  \[
  n(x\star y)=\lambda_\star n(x) n(y)\qquad\text{for all $x$, $y\in
    S$;}
  \]
  \item[(2)]
  for all $x$, $y$, $z\in S$,
  \[
  b_n(x\star y, z) = b_n(x, y\star z).
  \]
\end{enumerate}
 A symmetric composition with multiplier $\lambda_\star =1$ is called
  \emph{normalized}.
\end{defn}

This definition of symmetric compositions  
is not identical to the one given in \cite[\S34]{KMRT}, where
$(S,n)$ can \emph{a priori} be a nonsingular quadratic space of
arbitrary finite dimension (but in fact $\dim S=1$, $2$, $4$ or $8$ by
a theorem of Hurwitz, see \cite[(33.28)]{KMRT}), and all the symmetric
compositions are normalized. 

\medskip

The following lemma is an immediate consequence of the definition of
a symmetric composition:

\begin{lemma} Let $\star$ be a symmetric composition on $(S,n)$
with multiplier $\lambda_\star $.
\begin{enumerate}
\item
For any scalar $\nu\in F^\times$,  $\star$ is  a symmetric composition on the
quadratic space $(S,\nu n)$ with
multiplier  $\nu^{-1}\lambda_\star$.
\item
For any scalar $\nu\in F^\times$, the bilinear map 
$\nu\cdot \star \colon (x,y) \mapsto \nu x\star y$
is a symmetric composition on the
quadratic space $(S,n)$ with
multiplier  $\nu^{2}\lambda_\star$.
\end{enumerate}
\end{lemma}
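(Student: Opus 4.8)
The plan is to verify directly that the two conditions of Definition~\ref{def:symcomp} persist under each of the indicated modifications, using only the bilinearity of the polar form and its behaviour under scaling of the quadratic form. The single computational input needed is that, for $\nu\in F^\times$, the polar bilinear form of the rescaled quadratic form $\nu n$ is $b_{\nu n}=\nu\,b_n$; this is immediate from the defining formula $b_q(x,y)=q(x+y)-q(x)-q(y)$. With that in hand, everything reduces to bookkeeping.

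For part~(1), fix $\nu\in F^\times$ and regard $\star$ as a map on the quadratic space $(S,\nu n)$. Condition~(2) is unaffected up to the overall scalar $\nu$: from $b_n(x\star y,z)=b_n(x,y\star z)$ one gets $b_{\nu n}(x\star y,z)=\nu b_n(x\star y,z)=\nu b_n(x,y\star z)=b_{\nu n}(x,y\star z)$. For condition~(1), rewrite $n(x\star y)=\lambda_\star n(x)n(y)$ using $n=\nu^{-1}(\nu n)$ to obtain $(\nu n)(x\star y)=\nu\lambda_\star n(x)n(y)=\nu^{-1}\lambda_\star\,(\nu n)(x)\,(\nu n)(y)$. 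Hence $\star$ is a symmetric composition on $(S,\nu n)$ with multiplier $\nu^{-1}\lambda_\star$.

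For part~(2), keep the form $n$ and consider the map $\nu\cdot\star$. Bilinearity of $b_n$ in each argument gives $b_n(\nu x\star y,z)=\nu b_n(x\star y,z)=\nu b_n(x,y\star z)=b_n\bigl(x,\nu(y\star z)\bigr)$, which is exactly condition~(2) for $\nu\cdot\star$. For condition~(1), $n(\nu x\star y)=\nu^2 n(x\star y)=\nu^2\lambda_\star n(x)n(y)$, so $\nu\cdot\star$ is a symmetric composition on $(S,n)$ with multiplier $\nu^2\lambda_\star$.

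Since each verification is a one-line manipulation, there is no real obstacle; the only point demanding a moment's attention is keeping straight, in part~(1), which quadratic form — $n$ or $\nu n$ — is being used, so that the multiplier is extracted correctly. (One may also note that the two operations are compatible: combining (1) and (2) shows that $\nu\cdot\star$ is a normalized symmetric composition on $(S,\sqrt{\nu^2\lambda_\star}\,n)$ whenever the relevant square root lies in $F^\times$, which foreshadows the passage between symmetric compositions and their normalized representatives used later.)
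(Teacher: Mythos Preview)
Your proof is correct and matches the paper's approach: the paper simply states that the lemma ``is an immediate consequence of the definition of a symmetric composition'' and gives no further argument, and your direct verification of conditions~(1) and~(2) is exactly what is meant.

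One small slip in your closing parenthetical (which is not part of the proof proper): applying~(1) to the composition $\nu\cdot\star$, which by~(2) has multiplier $\nu^2\lambda_\star$ on $(S,n)$, shows that $\nu\cdot\star$ is normalized on $(S,\nu^2\lambda_\star\,n)$, not on $(S,\sqrt{\nu^2\lambda_\star}\,n)$. No square root is needed, and indeed the paper's Lemma~\ref{lem:2.1} uses precisely this: $\star$ is a normalized composition on $(S,\lambda_\star n)$.
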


\begin{lemma} 
  \label{lem:2.1} 
  Under condition (1), condition (2) of the definition of a symmetric
  composition is equivalent to
  \[
  x\star(y\star x) = \lambda_\star n(x)y=(x\star y)\star x
  \qquad\text{for all $x$, $y\in S$.}
  \]
\end{lemma}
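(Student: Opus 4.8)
The plan is to exploit the nonsingularity of $b_n$, which lets us replace an identity of bilinear/trilinear forms by an identity of vectors. First I would assume condition~(2) and derive the two quadratic identities. Fixing $x$, $y\in S$ and letting $z$ range over $S$, condition~(2) gives $b_n\bigl(x\star(y\star x),z\bigr)=b_n\bigl(x\star y,(y\star x)\diamond z\bigr)$? — no; more directly, apply (2) twice: $b_n\bigl(x\star(y\star x),z\bigr)$ I rewrite by putting the first argument through (2) in the form $b_n(a\star b,z)=b_n(a,b\star z)$ with $a=x$, $b=y\star x$, obtaining $b_n\bigl(x,(y\star x)\star z\bigr)$; then I would want to recognize $(y\star x)\star z$ via (2) again. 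The cleaner route: compute $b_n\bigl(x\star(y\star x),z\bigr)$ and $b_n\bigl(\lambda_\star n(x)\,y,z\bigr)=\lambda_\star n(x)\,b_n(y,z)$ and show they agree for all $z$. Using (2), $b_n\bigl(x\star(y\star x),z\bigr)=b_n\bigl(x,(y\star x)\star z\bigr)$, and again $b_n\bigl(x,(y\star x)\star z\bigr)=b_n\bigl((y\star x),??\bigr)$ does not directly help, so instead I would move $z$ to the outside: $b_n\bigl(x,(y\star x)\star z\bigr)=b_n\bigl(x\star(y\star x),z\bigr)$ is circular, so the key is to use the \emph{multiplier} condition~(1) to get a handle on $n(y\star x)$ and on the "norm form" of the left-multiplication operator. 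Concretely, I expect the efficient argument to go through the linearized form of condition~(1).

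So the main step is linearization of~(1). Replacing $x$ by $x+x'$ in $n(x\star y)=\lambda_\star n(x)n(y)$ yields
\[
b_n(x\star y,\,x'\star y)=\lambda_\star\,b_n(x,x')\,n(y)\qquad\text{for all }x,x',y\in S,
\]
and similarly in the second slot $b_n(x\star y,\,x\star y')=\lambda_\star\,n(x)\,b_n(y,y')$. Now combine this with~(2): for arbitrary $x,y,z$,
\[
b_n\bigl(x\star(y\star x),\,z\bigr)\overset{(2)}{=}b_n\bigl(x,\,(y\star x)\star z\bigr).
\]
On the other hand, applying~(2) to the right-hand side of the linearized~(1) with the substitution $x'\star y\rightsquigarrow$ a generic vector is what pins things down: write the linearized identity as $b_n\bigl(x\star y,\,w\bigr)$ where $w=x'\star y$ and use~(2) to turn $b_n(x\star y,w)$ into $b_n(x,y\star w)$, giving $b_n\bigl(x,\,y\star(x'\star y)\bigr)=\lambda_\star b_n(x,x')\,n(y)$ for all $x$; since $b_n$ is nonsingular this forces $y\star(x'\star y)=\lambda_\star n(y)\,x'$. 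Relabeling $y\leftrightarrow x$, $x'\to y$ gives the first desired identity $x\star(y\star x)=\lambda_\star n(x)\,y$. The identity $(x\star y)\star x=\lambda_\star n(x)\,y$ is obtained symmetrically, starting instead from the second linearized form of~(1) (linearizing in the first variable of $n(x\star y')=\lambda_\star n(x)n(y')$, or equivalently applying~(2) on the other side).

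For the converse, assume $x\star(y\star x)=\lambda_\star n(x)\,y=(x\star y)\star x$ for all $x,y$. I would polarize the middle identity $x\star(y\star x)=\lambda_\star n(x)y$ in $x$: replace $x$ by $x+z$ and subtract the $x$-only and $z$-only instances, obtaining
\[
x\star(y\star z)+z\star(y\star x)=\lambda_\star b_n(x,z)\,y\qquad\text{for all }x,y,z\in S.
\]
Pairing with a vector $u$ via $b_n$ and using that $b_n$ is symmetric, together with the analogous polarization of $(x\star y)\star x=\lambda_\star n(x)y$, should let me show $b_n(x\star y,z)-b_n(x,y\star z)$ vanishes: apply $b_n(-,z)$ to the first polarized identity with $z$ renamed, and $b_n(x,-)$ to the second, and compare. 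The main obstacle — and the only place requiring care — is bookkeeping of the polarizations and making sure no factor of $2$ is implicitly divided by, since the paper explicitly allows $\operatorname{char}F=2$; this is why I route everything through bilinear identities $b_n(\cdot,\cdot)$ and the nonsingularity of $b_n$ rather than through any quadratic expression, and why I use polarization (which never divides by $2$) rather than the "restitution" of a quadratic form. Once the char-free linearized identities are in hand, both implications are short.
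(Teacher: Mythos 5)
Your proof takes a genuinely different route from the paper's. The paper dispatches Lemma~\ref{lem:2.1} in one line: it rescales $n$ to $\lambda_\star n$, which makes $\star$ a normalized composition, and then cites the identity $x\star(y\star x)=n(x)y=(x\star y)\star x$ from \cite[(34.1)]{KMRT}. You give a direct, self-contained computation instead. Your forward direction is correct: linearize (1) to get $b_n(x\star y,\,x'\star y)=\lambda_\star b_n(x,x')\,n(y)$, rewrite the left side via (2) as $b_n\bigl(x,\,y\star(x'\star y)\bigr)$, and use nonsingularity of $b_n$ to conclude $y\star(x'\star y)=\lambda_\star n(y)\,x'$; the second identity follows symmetrically from the other slot of the linearization read against (2) in the form $b_n(a,b\star c)=b_n(a\star b,c)$.

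The converse is where there is a genuine gap. The recipe you sketch --- pair the polarized identity $x\star(y\star u)+u\star(y\star x)=\lambda_\star b_n(x,u)\,y$ with $b_n(-,z)$, pair the other polarized identity with $b_n(x,-)$, and compare --- does not obviously produce the quantity $\Phi(x,y,z)=b_n(x\star y,z)-b_n(x,y\star z)$. If you carry out the comparison you find differences of the form $b_n\bigl(x\star(y\star w),z\bigr)-b_n\bigl((x\star y)\star w,z\bigr)$ (associator-type terms), not $\Phi$. The cleaner route notes that the linearization of (1) makes $r_x$ a similarity with multiplier $\lambda_\star n(x)$, so $\ad_n(r_x)=\lambda_\star n(x)\,r_x^{-1}$, while the identity $x\star(y\star x)=\lambda_\star n(x)y$ gives $\ell_x r_x=\lambda_\star n(x)\Id$; combining these yields $\ad_n(r_x)=\ell_x$, which is exactly (2). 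But this only works when $r_x$ is invertible, i.e.\ $x$ anisotropic, and what you actually get for all $x$ is $\lambda_\star n(x)\,\Phi(u,x,w)=0$. Passing from this to $\Phi\equiv0$ requires either a spanning argument by anisotropic vectors or a double polarization of $n(x)\Phi(u,x,w)=0$ in $x$ followed by a nondegeneracy argument --- more than a matter of bookkeeping, and not covered by your sketch. The overall plan is salvageable, but the converse needs to be worked out in full.
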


\begin{proof} 
  The claim follows by applying  \cite[(34.1)]{KMRT} to the
  (normalized) composition $\star$ on $(S,\lambda_\star n)$.
\end{proof}

Since the symmetric compositions do not change when the quadratic form
$n$ is scaled, we may and will always assume without loss of
generality that $n$ represents~$1$. It is then a $3$-fold Pfister
form, by~\cite[(33.18), (33.29)]{KMRT}.

\begin{example}
  \label{ex:cayley}
  Let $(\Cay,n)$ be an octonion algebra with norm $n$,
  multiplication $(x,y)\mapsto x\cdot y$,  identity $1$, and conjugation
  $x\mapsto\overline{x}$. The multiplication
  \[
  x\star y= \overline{x}\cdot \overline{y}
  \]
  defines a normalized symmetric composition on $(\Cay,n)$,
  called the \emph{para-octonion composition} (see for example
  \cite[\S 34.A]{KMRT}). Observe that 
  $\overline{x \star y} =  \overline{y} \star \overline{x}$ for all
  $x,\,y \in \Cay$. 
\end{example}

More examples---and a complete classification of symmetric
compositions---are given in
Section~\ref{section:classificationsymcomp}.

\begin{defns}
  Let $(S,n)$ be a $3$-fold Pfister quadratic space and let $\star$
  and $\diamond$ be symmetric compositions on $(S,n)$. A
  \emph{similarity} $f\colon\star\to\diamond$ is an element $f\in\GO(n)$
  such that
  \[
  f(x\star y) =
  f(x)\dia f(y)\qquad\text{for all $x$, $y\in S$.}
  \]
  The multipliers of $f$, $\star$ and $\dia$ are then related by
  $\lambda_\star=\lambda_\dia \mu(f)$.  Similarities with
  multiplier~$\mu(f)=1$ are called \emph{isomorphisms}. In particular,
  similarities between symmetric compositions with the same multiplier
  are isomorphisms.

  The \emph{opposite} of the symmetric composition $\star$ on $(S,n)$
  is the symmetric composition $\star^\op$ on $(S,n)$ defined by
  \[
  x\star^\op y= y\star x\qquad\text{for $x$, $y\in S$.}
  \]
  The multiplier of $\star^\op$ is the same as the multiplier of
  $\star$.

  Symmetric compositions $\star$ and $\dia$ are said to be \emph{similar}
  if there is a similarity $\star\to\dia$; they are said to be
  \emph{antisimilar} if $\star^\op$ and $\dia$ are similar.
\end{defns}

\begin{prop} 
  \label{prop:normalize}
  Let $n$ be a $3$-fold quadratic Pfister form on a vector space
  $S$, and let $\star$ be a symmetric composition on $(S,n)$. There
  exists up to isomorphism a unique normalized composition $\dia$ on
  $(S,n)$ similar to $\star$.
\end{prop}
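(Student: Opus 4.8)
The plan is to use the freedom granted by the two parts of the first unnumbered lemma to rescale $\star$ into a normalized composition, and then to show that any two such rescalings are isomorphic. Given $\star$ with multiplier $\lambda_\star$, part~(2) of that lemma says $\nu\cdot\star$ is a symmetric composition on $(S,n)$ with multiplier $\nu^2\lambda_\star$. So the first step is to find $\nu\in F^\times$ with $\nu^2\lambda_\star=1$, i.e.\ $\nu^2=\lambda_\star^{-1}$. This is not always possible directly, so the key observation must be that $\lambda_\star$ is forced to be a square in $F^\times$ whenever $\star$ is a symmetric composition on a fixed $3$-fold Pfister quadratic space $(S,n)$; granting that, put $\dia=\nu\cdot\star$ for a square root $\nu$ of $\lambda_\star^{-1}$, which is normalized.

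To see that $\lambda_\star\in F^{\times2}$, I would evaluate condition~(1) at a vector on which $n$ takes a nonzero value: since we have arranged that $n$ represents~$1$, pick $e\in S$ with $n(e)=1$. Then from Lemma~\ref{lem:2.1}, $e\star(e\star e)=\lambda_\star n(e)e=\lambda_\star e$, and more usefully $n(e\star e)=\lambda_\star n(e)^2=\lambda_\star$. Now I must argue $n(e\star e)$ is a square. The cleanest route is to show that left multiplication $L_e\colon x\mapsto e\star x$ is a similarity of $(S,n)$ with multiplier $\lambda_\star n(e)=\lambda_\star$: indeed $n(e\star x)=\lambda_\star n(e)n(x)=\lambda_\star n(x)$. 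But $S$ has dimension~$8$, so $\det L_e$ is well-defined up to squares, and relating $(\det L_e)^2$ to $\lambda_\star^{\dim S}=\lambda_\star^8$ via $n(L_e(x))=\lambda_\star n(x)$ only yields $\lambda_\star^4\in F^{\times2}$, hence $\lambda_\star\in F^{\times2}$ after all (since $\lambda_\star^4=(\lambda_\star^2)^2$ forces nothing, but $\lambda_\star^4\cdot\lambda_\star^{-4}$... ) — so I would instead use the Clifford-algebra or spinor-norm structure, or simply invoke that $\star$ on $(S,\lambda_\star n)$ is \emph{normalized}, whence $(S,\lambda_\star n)$ is itself a $3$-fold Pfister form by the cited \cite[(33.18),(33.29)]{KMRT}; two $3$-fold Pfister forms $n$ and $\lambda_\star n$ being isometric (both represent~$1$ and a Pfister form multiplied by a represented scalar is isometric to itself only when that scalar is a similarity factor — but every Pfister form \emph{is} multiplicative, so $\lambda_\star n\cong n$ iff $\lambda_\star$ is a value of $n$, which it is, being $=n(e\star e)$). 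This shows $\lambda_\star\in D(n)=G(n)$, and by the multiplicativity of Pfister forms $\lambda_\star$ is in fact represented, but to get a \emph{square} I use that the multiplier of a similarity $L_e$ of a Pfister form lies in the subgroup generated by represented values modulo squares — in the Pfister case this group is trivial modulo squares, so $\lambda_\star\in F^{\times2}$. \emph{This squareness of the multiplier is the main obstacle}, and the argument above via multiplicativity of Pfister forms is the route I would polish.

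For uniqueness up to isomorphism, suppose $\dia_1$ and $\dia_2$ are normalized compositions on $(S,n)$ both similar to $\star$. Similarity is transitive and symmetric (the inverse of a similarity $f\colon\dia_1\to\dia_2$ is a similarity $\dia_2\to\dia_1$ with reciprocal multiplier), so $\dia_1$ and $\dia_2$ are similar to each other, say via $f\in\GO(n)$ with $f(x\dia_1 y)=f(x)\dia_2 f(y)$. By the relation recorded in the Definitions block, $\lambda_{\dia_1}=\lambda_{\dia_2}\mu(f)$, and since both multipliers equal~$1$ we get $\mu(f)=1$, i.e.\ $f$ is an isomorphism. Hence $\dia_1\cong\dia_2$, proving both existence and uniqueness and completing the proof.
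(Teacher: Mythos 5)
Your proof of uniqueness is correct and is essentially the paper's. The existence part, however, has a genuine gap: it rests on the claim that the multiplier $\lambda_\star$ is a square in $F^\times$, and this is false in general. Observe first that rescaling $\star\mapsto\nu\cdot\star$ changes the multiplier from $\lambda_\star$ to $\nu^2\lambda_\star$, i.e.\ only within a square class, so scalar rescaling can normalize $\star$ \emph{only} when $\lambda_\star\in F^{\times2}$. But if $\dia$ is a normalized composition on $(S,n)$ (e.g.\ a para-octonion product) and $f\in\GO(n)$ is a similarity with non-square multiplier $\mu(f)$ — such $f$ exists whenever $G(n)\neq F^{\times2}$, e.g.\ when $n$ is the hyperbolic Pfister form, where $G(n)=F^\times$ — then $x\star y=f\bigl(f^{-1}(x)\dia f^{-1}(y)\bigr)$ is again a symmetric composition on $(S,n)$, with multiplier $\lambda_\star=\mu(f)^{-1}\lambda_\dia=\mu(f)^{-1}\notin F^{\times2}$. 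So $\lambda_\star$ genuinely ranges over $G(n)$, not over $F^{\times2}$, and no scalar multiple of such a $\star$ is normalized.

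Both of your attempted justifications fail for identifiable reasons. The determinant computation gives, for $\dim S=8$, only $\det(\ell_u^\star)=\pm\mu(\ell_u^\star)^4$, hence $\lambda_\star^4\in F^{\times2}$, which carries no information. And the statement that ``the multiplier of a similarity of a Pfister form lies in a subgroup that is trivial modulo squares'' is not correct: for a Pfister form one has $G(n)=D(n)$, and $D(n)/F^{\times2}$ is highly nontrivial in general (for the split form it is all of $F^\times/F^{\times2}$).

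What the paper does instead is replace scalar rescaling by conjugation through the left/right translation operators: with $u$ a norm-one vector, it sets
\[
x\dia y=\lambda_\star^{-2}\,\ell_u^\star\bigl(r_u^\star(x)\star r_u^\star(y)\bigr).
\]
This is a symmetric composition on $(S,n)$, and $\lambda_\star^{-1}r_u^\star\colon\dia\to\star$ is a similarity of multiplier $\lambda_\star^{-1}$, so $\lambda_\dia=\lambda_\star\cdot\lambda_\star^{-1}=1$. The crucial point is that a \emph{similarity} $\dia\to\star$ is allowed to have an arbitrary multiplier in $G(n)$, whereas a scalar rescaling has multiplier a square; the construction via $\ell_u^\star,r_u^\star$ exploits this extra freedom and makes no squareness assumption. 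If you want to repair your argument along your original lines, this replacement of ``multiply by a scalar'' with ``conjugate by $r_u^\star$'' is the missing idea.
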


\begin{proof}
  Let $u \in S$ be such that $n(u)=1$. Consider the maps
  $\ell_u^\star$, $r_u^\star\colon S\to S$ defined by
  \[
  \ell_u^\star(x)=u\star x\qquad\text{and}\qquad r_u^\star(x)=x\star u
  \qquad\text{for $x\in S$.}
  \]
  Define a new multiplication $\dia$ on $S$ by
  \[
  x\dia y= \lambda_\star^{-2}\ell_u^\star\bigl(r_u^\star(x)\star
  r_u^\star(y)\bigr) \qquad\text{for $x$, $y\in S$.}
  \]
  Condition~(1) for a symmetric composition yields
  \[
  n\bigl(\ell_u^\star(x)\bigr) = \lambda_\star n(x) =
  n\bigl(r_u^\star(x)\bigr) \qquad\text{for all $x\in S$.}
  \]
  It is then easy to check that $\dia$ is a normalized symmetric
  composition on $(S,n)$. Moreover, by Lemma~\ref{lem:2.1}, we have
  \[
  \ell_u^\star\circ r_u^\star = r_u^\star\circ\ell_u^\star =
  \lambda_\star\Id_S.
  \]
  \relax From the definition of $\dia$, it follows that
  \[
  \lambda_\star^{-1}r_u^\star(x\dia y) = \bigl(\lambda_\star^{-1}
  r_u^\star(x)\bigr) \star \bigl(\lambda_\star^{-1} r_u^\star(y)\bigr)
  \qquad\text{for all $x$, $y\in S$,}
  \]
  hence $\lambda_\star^{-1}r_u^\star\colon\dia\to\star$ is a
  similarity. If $\dia'$ is another normalized composition similar to
  $\star$, then $\dia$ and $\dia'$ are similar, hence isomorphic since
  they have the same multiplier.
\end{proof}

\section{From symmetric compositions to trialitarian automorphisms}

Throughout this section, we fix a $3$-fold Pfister quadratic space
$(S,n)$ and a symmetric composition $\star$ on $(S,n)$ with multiplier
$\lambda_\star$. We show how to associate to this composition a
trialitarian automorphism $\rho_\star$ of $\gPGO^+(n)$ defined over
$F$, i.e., an outer automorphism of order~$3$ in
$\gAut\bigl(\gPGO^+(n)\bigr)(F)$.

For each element $x\in S$ we define linear maps
$\ell^\star_x$, $r^\star_x\colon S\to S$ by 
\[
 \ell_x^\star (y) = x \star y \qquad \text{and}\qquad r_x^\star(y) = y \star x \qquad 
 \text{for $y \in S$.}
 \]
Consider 
$
\begin{pmatrix}
  0&\lambda\inv _\star r_x^\star\\ \ell_x^\star&0
\end{pmatrix}\in M_2(\End S) = \End _F(S\oplus S)$. By Lemma~\ref{lem:2.1}, we have
\[
\begin{pmatrix}
  0 & \lambda\inv _\star r_x^\star\\ \ell_x^\star&0
\end{pmatrix}^2= n(x)\cdot \Id_{S\oplus S}.
\]
Therefore, by the universal property of Clifford algebras, the map

\[
x\mapsto
\begin{pmatrix}
  0&\lambda_\star\inv r_x^\star\\ \ell_x^\star&0
\end{pmatrix}, \quad \text{$x \in S$}
\]
induces an $F$-algebra homomorphism 
\[
  \alpha^\star \colon C(S,n)\to \End _F(S\oplus S).
\]

\begin{prop}
  \label{prop:triasimilituderevone}
  The map $ \alpha^\star $ is an isomorphism of $F$-algebras
  \[
  \alpha^\star \colon C(S,n) \isom \End_F(S \oplus S).
  \]
  It restricts to an isomorphism of $F$-algebras with involution
  \[
  \alpha^\star_0\colon \big(C_0(S,n),\sigma\big) \isom (\End_F
  S,\ad_{n})\times (\End_F S, \ad_{n})
  \]
  such that for $x$, $y\in S$
  \[
  \alpha^\star_0(x\cdot y) = (\lambda_\star^{-1}r_x^\star \ell_y^\star,
  \lambda_\star^{-1} \ell_x^\star r_y^\star).
  \]
\end{prop}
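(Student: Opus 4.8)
The plan is to exploit the universal property of the Clifford algebra together with a dimension count. First, since $C(S,n)$ has dimension $2^8 = 256 = \dim_F \End_F(S\oplus S)$, to prove that $\alpha^\star$ is an isomorphism it suffices to show it is surjective, or equivalently (as the source is simple — $n$ being a $3$-fold Pfister form, $C(S,n)$ is a central simple algebra over $F$) that $\alpha^\star\neq 0$, which is clear since $\alpha^\star(x)\neq0$ for $x$ with $n(x)\neq0$. Actually I would argue surjectivity directly: picking $x$, $x'\in S$ with $b_n(x,x')$ suitably normalized, the products $\alpha^\star(x)\alpha^\star(x')$ produce operators of block-diagonal form $\bigl(\lambda_\star^{-1}r_x^\star\ell_{x'}^\star,\lambda_\star^{-1}\ell_x^\star r_{x'}^\star\bigr)$, and one checks these, as $x,x'$ vary, span $\End_F S\times\End_F S$; combined with the off-diagonal elements $\alpha^\star(x)$ this gives all of $M_2(\End S)$. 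The cleanest route, though, is simplicity of $C(S,n)$: a nonzero homomorphism from a central simple $F$-algebra to an algebra of the same dimension is an isomorphism.

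Next, the restriction to the even part. By construction $\alpha^\star$ sends products $x\cdot y$ of two vectors to $\operatorname{diag}\bigl(\lambda_\star^{-1}r_x^\star\ell_y^\star,\ \lambda_\star^{-1}\ell_x^\star r_y^\star\bigr)$, and since $C_0(S,n)$ is generated by such products, $\alpha^\star$ carries $C_0(S,n)$ into the block-diagonal subalgebra $\End_F S\times\End_F S$; equality follows because $\alpha^\star$ is bijective and both $C_0(S,n)$ and $\End_F S\times\End_F S$ have dimension $128$. This also establishes the displayed formula for $\alpha^\star_0(x\cdot y)$, by definition of $\alpha^\star_0$ as the restriction. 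The key computation here is verifying that $\alpha^\star(x)\alpha^\star(y)$ indeed has the claimed block-diagonal shape, which is immediate from multiplying the two off-diagonal $2\times2$ matrices.

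The remaining — and most delicate — point is compatibility with involutions: that $\alpha^\star_0$ intertwines $\sigma$ with $\ad_n\times\ad_n$. For this I would use condition~(2) of Definition~\ref{def:symcomp}, which says $b_n(x\star y,z)=b_n(x,y\star z)$; this translates exactly into the adjointness relations $\ad_n(\ell_x^\star)=r_x^\star$ and $\ad_n(r_x^\star)=\ell_x^\star$ for every $x\in S$. Granting this, apply $\sigma$ to a generator $x\cdot y$: since $\sigma$ fixes vectors, $\sigma(x\cdot y)=y\cdot x$, so $\alpha^\star_0(\sigma(x\cdot y))=\bigl(\lambda_\star^{-1}r_y^\star\ell_x^\star,\ \lambda_\star^{-1}\ell_y^\star r_x^\star\bigr)$. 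On the other hand $(\ad_n\times\ad_n)\bigl(\alpha^\star_0(x\cdot y)\bigr)=\bigl(\lambda_\star^{-1}\ad_n(\ell_y^\star)\ad_n(r_x^\star),\ \lambda_\star^{-1}\ad_n(r_y^\star)\ad_n(\ell_x^\star)\bigr)=\bigl(\lambda_\star^{-1}r_y^\star\ell_x^\star,\ \lambda_\star^{-1}\ell_y^\star r_x^\star\bigr)$, using that $\ad_n$ reverses products. The two agree, and since such $x\cdot y$ generate $C_0(S,n)$ as an $F$-algebra, $\alpha^\star_0$ is an isomorphism of algebras with involution. The main obstacle is thus the bookkeeping in the involution step — getting the order of the factors and the interchange of $\ell$ and $r$ correct — but it reduces entirely to the symmetry axiom~(2) and the anti-automorphism property of $\ad_n$.
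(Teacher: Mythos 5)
Your proposal is correct and follows essentially the same route as the paper: injectivity from simplicity of $C(S,n)$ plus a dimension count, restriction to the even part by noting that products $x\cdot y$ land in the block-diagonal subalgebra, and then verification on generators that $\alpha_0^\star$ intertwines the involutions. The only cosmetic difference is that you verify $\ad_n(\ell_x^\star)=r_x^\star$ directly from axiom~(2) (via symmetry of $b_n$: $b_n(x\star y,z)=b_n(z,x\star y)=b_n(z\star x,y)=b_n(y,z\star x)$), whereas the paper invokes Lemma~\ref{lem:2.1}; both are valid one-line derivations of the same adjointness relation.
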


\begin{proof}
  This is shown in \cite[(35.1)]{KMRT}. For completeness, we reproduce
  the easy argument.  The map $\alpha^\star$ is injective since the
  Clifford algebra $C(S, n)$ is simple. It restricts to an $F$-algebra
  embedding $C_0(S, n)\hookrightarrow (\End S)\times(\End S)$. Since
  $\dim S=8$, dimension count shows that this embedding is an
  isomorphism. Using Lemma~\ref{lem:2.1}, it is easy to check that the
  involution $\sigma$ corresponds to $\ad_n\times \ad_n$ under
  $\alpha_0^\star$.
\end{proof}

For $f$, $g$, $h\in\GO(n)$, define the $F$-algebra automorphism
$\Theta(g,h)$ of $(\End_FS)\times(\End_FS)$ by
\[
\Theta(g,h)\colon
(\varphi,\psi)\mapsto(g\psi g^{-1},h\varphi h^{-1})
\]
and consider the following diagrams:
\begin{equation*}
  \tag*{$D_\star^+(f,g,h)$}
  \xymatrix{
  C_0(S,n) \ar[r]^(.35){\alpha_0^\star} \ar[d]_{C_0[f]} &
  (\End_FS)\times(\End_FS) \ar[d]^{\Int[g]\times\Int[h]}\\
  C_0(S,n) \ar[r]^(.35){\alpha_0^\star}& (\End_FS)\times(\End_FS)
  }
\end{equation*}
and
\begin{equation*}
  \tag*{$D_\star^-(f,g,h)$}
  \xymatrix{
  C_0(S,n) \ar[r]^(.35){\alpha_0^\star} \ar[d]_{C_0[f]} &
  (\End_FS)\times(\End_FS) \ar[d]^{\Theta(g,h)}\\
  C_0(S,n) \ar[r]^(.35){\alpha_0^\star}& (\End_FS)\times(\End_FS).
  }
\end{equation*}

\begin{lemma}
  \label{lem:3}
  Let $\star$ be a symmetric composition on $(S,n)$.
  For $f$, $g$, $h\in\GO(n)$,
  the following statements are equivalent:
  \begin{enumerate}
  \item[($a^+$)]
  there exists a scalar $\lambda\in F^\times$ such that
  \[
  \lambda f(x\star y) = g(x)\star h(y)\qquad\text{for all $x$, $y\in
    S$;}
  \]
  \item[($b^+$)]
  there exists a scalar $\mu\in F^\times$ such that
  \[
  \mu g(x\star y) = h(x)\star f(y) \qquad\text{for all $x$, $y\in S$;}
  \]
  \item[($c^+$)]
  there exists a scalar $\nu\in F^\times$ such that
  \[
  \nu h(x\star y) = f(x)\star g(y) \qquad\text{for all $x$, $y\in S$;}
  \]
  \item[($d^+$)]
  the diagram $D_\star^+(f,g,h)$ commutes;
  \item[($e^+$)]
  the diagram $D_\star^+(g,h,f)$ commutes;
  \item[($f^+$)]
  the diagram $D_\star^+(h,f,g)$ commutes.
  \end{enumerate}
  When they hold, the scalars $\lambda$, $\mu$, $\nu$ and the
  multipliers of $f$, $g$, $h$ are related by
  \[
  \lambda\mu=\mu(h),\quad \mu\nu=\mu(f),\quad \lambda\nu=\mu(g).
  \]
  Moreover, the similarities $f$, $g$, and $h$ are all proper in this
  case.

  Likewise, for $f$, $g$, $h\in\GO(n)$ the following statements are
  equivalent:
  \begin{enumerate}
  \item[($a^-$)]
  there exists a scalar $\lambda\in F^\times$ such that
  \[
  \lambda f(x\star y)= h(y)\star g(x)\qquad\text{for all $x$, $y\in
    S$;}
  \]
  \item[($b^-$)]
  there exists a scalar $\mu\in F^\times$ such that
  \[
  \mu g(x\star y) = f(y)\star h(x)\qquad\text{for all $x$, $y\in S$;}
  \]
  \item[($c^-$)]
  there exists a scalar $\nu\in F^\times$ such that
  \[
  \nu h(x\star y) = g(y)\star f(x)\qquad\text{for all $x$, $y\in S$;}
  \]
  \item[($d^-$)]
  the diagram $D_\star^-(f,h,g)$ commutes;
  \item[($e^-$)]
  the diagram $D_\star^-(g,f,h)$ commutes;
  \item[($f^-$)]
  the diagram $D_\star^-(h,g,f)$ commutes.
  \end{enumerate}
  When they hold, the scalars $\lambda$, $\mu$, $\nu$ and the
  multipliers of $f$, $g$, $h$ are related by
  \[
  \lambda\mu=\mu(h),\quad \mu\nu=\mu(f),\quad \lambda\mu= \mu(g).
  \]
  Moreover, the similarities $f$, $g$, and $h$ are all improper in
  this case.
\end{lemma}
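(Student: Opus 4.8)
The plan is to prove the ``$+$'' equivalences first and then deduce the ``$-$'' case by a twisting argument. The natural cycle to establish is
\[
(a^+)\Longrightarrow(b^+)\Longrightarrow(c^+)\Longrightarrow(a^+),
\]
together with $(a^+)\Longleftrightarrow(d^+)$; the cyclic symmetry $(d^+)\Leftrightarrow(e^+)\Leftrightarrow(f^+)$ then follows because $D_\star^+(g,h,f)$ is literally the diagram one writes down when one runs the same argument starting from $(b^+)$ instead of $(a^+)$. For the implication $(a^+)\Rightarrow(b^+)$ I would start from $\lambda f(x\star y)=g(x)\star h(y)$ and apply Lemma~\ref{lem:2.1}: substituting cleverly (e.g. replacing $y$ by $y\star x$ and using $x\star(y\star x)=\lambda_\star n(x)y$, then using condition~(2) to move the composition around) lets one isolate an identity of the form $\mu g(x\star y)=h(x)\star f(y)$. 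The bookkeeping of the scalar is the routine part: comparing multipliers on both sides of $\lambda f(x\star y)=g(x)\star h(y)$ via condition~(1) gives $\lambda^2\mu(f)\lambda_\star=\mu(g)\mu(h)\lambda_\star$, i.e. $\lambda^2\mu(f)=\mu(g)\mu(h)$, and the three such relations together with the definitions $\lambda\mu=\mu(h)$, etc., are forced; I would record these at the end once all three multiplicative identities are in hand.

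For the translation into diagrams, the key computation is Proposition~\ref{prop:triasimilituderevone}: $\alpha_0^\star(x\cdot y)=(\lambda_\star^{-1}r_x^\star\ell_y^\star,\ \lambda_\star^{-1}\ell_x^\star r_y^\star)$. Commutativity of $D_\star^+(f,g,h)$ means $C_0[f]$, when transported through $\alpha_0^\star$, equals $\Int[g]\times\Int[h]$; since $C_0(S,n)$ is generated by products $x\cdot y$ with $x,y\in S$, it suffices to check this on such elements. Using \eqref{equ:simactionCo}, $C_0[f](x\cdot y)=\mu(f)^{-1}f(x)\cdot f(y)$, so transporting gives the pair $\bigl(\mu(f)^{-1}\lambda_\star^{-1}r_{f(x)}^\star\ell_{f(y)}^\star,\ \mu(f)^{-1}\lambda_\star^{-1}\ell_{f(x)}^\star r_{f(y)}^\star\bigr)$, while $\bigl(\Int[g]\times\Int[h]\bigr)$ applied to $\alpha_0^\star(x\cdot y)$ gives $\bigl(\lambda_\star^{-1}g\,r_x^\star\ell_y^\star\,g^{-1},\ \lambda_\star^{-1}h\,\ell_x^\star r_y^\star\,h^{-1}\bigr)$. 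Matching the first components amounts to $r_{f(x)}^\star\ell_{f(y)}^\star=\mu(f)\,g\,r_x^\star\ell_y^\star\,g^{-1}$ as operators on $S$, which (polarizing and using that the $r^\star$ and $\ell^\star$ span enough of $\End S$) is equivalent to a bilinear identity $f(z\star x)\star \text{(something)}$—more precisely it unwinds to $(c^+)$-type relations, and similarly the second component unwinds to another. The cleanest route is to show $(d^+)\Leftrightarrow\bigl[(b^+)\text{ and }(c^+)\bigr]$ directly from these component identities, then close the loop using $(a^+)\Rightarrow(b^+)\Rightarrow(c^+)\Rightarrow(a^+)$ so that each of $(a^+)$ through $(f^+)$ individually implies all the others.

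For the ``$-$'' statements, the observation is that the map $\Theta(g,h)$ differs from $\Int[g]\times\Int[h]$ by the swap of the two factors, and that this swap is exactly the automorphism induced on $C_0(S,n)\cong(\End S)\times(\End S)$ by an improper similarity. Concretely, if $\kappa\colon S\to S$ is any improper similarity of $n$ (for instance one coming from the conjugation map in the octonion picture of Example~\ref{ex:cayley}), then $C_0[\kappa]$ swaps the two components of $\alpha_0^\star$; so the $D_\star^-$ diagrams for $(f,g,h)$ are the $D_\star^+$ diagrams for $(f\kappa,\ldots)$ up to this swap, and the $(a^-)$–$(c^-)$ identities are obtained from $(a^+)$–$(c^+)$ by precomposing/postcomposing with $\kappa$ and using that $\kappa$ interchanges $\ell$ and $r$ for the composition $x\star^{\op}y=y\star x$ (this is where the $y\star x$ order on the right-hand sides of $(a^-)$–$(c^-)$ comes from). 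I expect the main obstacle to be purely organizational: keeping the three operator-identities, the swap, and the scalar relations consistent across all six diagrams without sign or multiplier errors—the reversal of order in the $(-)$ case means one must be careful that the role of $g$ and $h$ is exchanged in the diagrams $D_\star^-(f,h,g)$ versus the naive $D_\star^-(f,g,h)$, as the statement already indicates. Properness/improperness of $f$, $g$, $h$ then drops out for free: in the $(+)$ case the transported automorphism is $\Int[g]\times\Int[h]$, which fixes each factor and hence fixes the center of $C_0(S,n)$, so $f$ is proper; in the $(-)$ case $\Theta(g,h)$ swaps the factors, hence acts nontrivially on the center, so $f$ is improper.
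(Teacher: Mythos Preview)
Your cycle $(a^+)\Rightarrow(b^+)\Rightarrow(c^+)\Rightarrow(a^+)$ via Lemma~\ref{lem:2.1} is exactly what the paper does (it multiplies $\lambda f(x\star y)=g(x)\star h(y)$ on the left by $h(y)$, uses $h(y)\star(g(x)\star h(y))=\lambda_\star n(h(y))g(x)$, and inverts $r_y^\star$), and your derivation of $(a^+),(b^+),(c^+)\Rightarrow(d^+)$ by checking the components of $\alpha_0^\star$ on generators $x\cdot y$ is also the paper's route.

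The genuine gap is in the converse $(d^+)\Rightarrow(b^+),(c^+)$. Commutativity of $D_\star^+(f,g,h)$ gives you the two component identities
\[
r_{f(x)}^\star\ell_{f(y)}^\star=\mu(f)\,g\,r_x^\star\ell_y^\star\,g^{-1},
\qquad
\ell_{f(x)}^\star r_{f(y)}^\star=\mu(f)\,h\,\ell_x^\star r_y^\star\,h^{-1},
\]
and you propose to ``polarize'' these to recover $(b^+)$ and $(c^+)$. But notice that the first identity involves only $f$ and $g$, the second only $f$ and $h$, whereas each of $(b^+)$ and $(c^+)$ involves all three of $f,g,h$. The products $r_x^\star\ell_y^\star$ live in the even Clifford algebra, and there is no evident way to peel the factor $r_x^\star$ off from $\ell_y^\star$ at this level: the individual operators $r_x^\star$, $\ell_y^\star$ sit in the \emph{odd} part of $C(S,n)$, which $C_0[f]$ does not touch. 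The paper's argument supplies exactly this missing step. One defines a second Clifford map
\[
\beta\colon C(S,n)\to\End_F(S\oplus S),\qquad
x\longmapsto
\begin{pmatrix}0&\lambda_\star^{-1}r_{f(x)}^\star\\ \mu(f)^{-1}\ell_{f(x)}^\star&0\end{pmatrix},
\]
and applies Skolem--Noether to the automorphism $\beta\circ(\alpha^\star)^{-1}$ of the full matrix algebra $\End_F(S\oplus S)$. Since this automorphism preserves the diagonal $(\End_FS)\times(\End_FS)$, it equals $\Int\bigl(\begin{smallmatrix}\varphi&0\\0&\psi\end{smallmatrix}\bigr)$ for some invertible $\varphi,\psi$; comparing the \emph{off-diagonal} blocks now gives $\varphi\, r_x^\star\,\psi^{-1}=r_{f(x)}^\star$ and $\psi\,\ell_x^\star\,\varphi^{-1}=\mu(f)^{-1}\ell_{f(x)}^\star$ separately, i.e.\ $\varphi(y\star x)=\psi(y)\star f(x)$ and $\psi(x\star y)=\mu(f)^{-1}f(x)\star\varphi(y)$. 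Matching the restriction to $C_0$ with $(d^+)$ then forces $[\varphi]=[g]$ and $[\psi]=[h]$, and these two equations are precisely $(b^+)$ and $(c^+)$. This passage through the full Clifford algebra is the idea your outline is missing; the obstacle is not organizational.

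For the minus case, your reduction by an improper $\kappa$ is workable in principle, but for a general symmetric composition $\star$ you do not have a canonical $\kappa$ (the octonion conjugation is special to Example~\ref{ex:cayley}); you only know that $C_0[\kappa]$ transported through $\alpha_0^\star$ has the form $\Theta(\kappa_1,\kappa_2)$ for some $\kappa_1,\kappa_2\in\GO(n)$ you must track. The paper avoids this and simply reruns the same Skolem--Noether argument with the swap built into $\Theta$. Your properness argument is fine: $(d^+)$ shows $f$ is proper because $\Int[g]\times\Int[h]$ fixes the center of $C_0(S,n)$, and the cyclic symmetry via $(e^+),(f^+)$ handles $g$ and $h$.
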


\begin{proof}
  This is essentially proved in \cite[(35.4)]{KMRT}. We give a proof
  for the reader's convenience.

  ($a^+$)$\Rightarrow$($b^+$) Multiplying each side of ($a^+$) on the
  left by $h(y)$ and using Lemma~\ref{lem:2.1}, we obtain
  \[
  \lambda h(y)\star f(x\star y) = \lambda_\star n\bigl(h(y)\bigr) g(x)
  \qquad\text{for $x$, $y\in S$.}
  \]
  If $y$ is anisotropic, the map $r_y^\star$ is a bijection whose
  inverse is $\lambda_\star^{-1}n(y)^{-1}\ell_y^\star$. Letting $X=y$
  and $Y=x\star y$, we derive from the preceding equation
  \[
  \lambda h(X)\star f(Y)= \mu(h) g(X\star Y) \qquad\text{for $X$,
    $Y\in S$ with $Y$ anisotropic.}
  \]
  Since generic vectors in $S$ are anisotropic, ($b^+$) follows with
  $\mu=\mu(h)\lambda^{-1}$. Similar arguments yield
  ($b^+$)$\Rightarrow$($c^+$) 
  with $\nu=\mu(f)\mu^{-1}$ and ($c^+$)$\Rightarrow$($a^+$) with
  $\lambda=\mu(g)\nu^{-1}$.

  Now, assume ($a^+$), ($b^+$), and ($c^+$) hold. From ($b^+$) and
  ($c^+$), and from $\mu\nu=\mu(f)$, we readily derive 
  \[
  g\bigl((y\star z)\star x\bigr) = \mu^{-1} h(y\star z)\star f(x) =
  \mu(f)^{-1} \bigl(f(y)\star g(z)\bigr)\star f(x)
  \]
  and
  \[
  h\bigl(x\star(z\star y)\bigr) = \nu^{-1}f(x)\star g(z\star y) =
  \mu(f)^{-1} f(x)\star\bigl(h(z)\star f(y)\bigr)
  \]
  for all $x$, $y$, $z\in S$. These equations mean that diagram
  $D_\star^+(f,g,h)$ commutes, hence ($d^+$) holds. Similar computations
  show that ($e^+$) and ($f^+$) hold.

  Now, assume ($d^+$) holds. The map
  \[
  x\mapsto
  \begin{pmatrix}
    0&\lambda_\star^{-1}r_{f(x)}^\star\\
    \mu(f)^{-1} \ell_{f(x)}^\star&0
  \end{pmatrix}
  \]
  also yields by the universal property of Clifford algebras an
  $F$-algebra isomorphism $\beta\colon C(S,n)\isom \End_F(S\oplus
  S)$. The automorphism $\beta\circ(\alpha^\star)^{-1}$ of
  $\End_F(S\oplus S)$ is inner by the Skolem--Noether theorem, and it
  preserves $(\End_FS)\times(\End_FS)$ diagonally embedded, hence
  \[
  \beta\circ(\alpha^\star)^{-1} = \Int
  \begin{pmatrix}
    \varphi&0\\0&\psi
  \end{pmatrix}
  \qquad\text{for some invertible $\varphi$, $\psi\in\End_FS$.}
  \]
  It follows that for $x\in S$
  \[
  \varphi\circ r_x^\star\circ\psi^{-1}=r_{f(x)}^\star
  \qquad\text{and}\qquad \psi\circ\ell_x^\star\circ\varphi^{-1} =
  \mu(f)^{-1}\ell_{f(x)}^\star,
  \]
  which means that for $x$, $y\in S$
  \begin{equation}
    \label{eq:phipsi}
    \varphi(y\star x) = \psi(y)\star f(x) \qquad\text{and}\qquad
    \psi(x\star y) = \mu(f)^{-1}f(x)\star\varphi(y).
  \end{equation}
  For $x$, $y\in S$ we have
  \[
  \beta(x\cdot y) = (\lambda_\star^{-1}\mu(f)^{-1} r_{f(x)}^\star
  \ell_{f(x)}^\star, \lambda_\star^{-1}\mu(f)^{-1}\ell_{f(x)}^\star
  r_{f(x)}^\star) = \alpha_0^\star\circ C_0[f](x\cdot y),
  \]
  hence ($d^+$) yields
  \[
  \Int
  \begin{pmatrix}
    \varphi&0\\0&\psi
  \end{pmatrix}
  =
  \Int
  \begin{pmatrix}
    g&0\\0&h
  \end{pmatrix}.
  \]
  It follows that $[\varphi]=[g]$ and $[\psi]=[h]$, hence
  \eqref{eq:phipsi} yields~($c^+$) and ($b^+$). Similarly,
  ($e^+$)$\Rightarrow$($a^+$), ($c^+$), and
  ($f^+$)$\Rightarrow$($a^+$), ($b^+$). Thus, all the statements
  ($a^+$)--($f^+$) are equivalent. When they hold, ($d^+$) shows that
  $C_0[f]$ is the identity on the center of $C_0(S,n)$, hence $f$ is
  proper. Similarly, ($e^+$) shows that $g$ is proper and ($f^+$) that
  $h$ is proper.

  The proof of the second part is similar. Details are left to the reader.
\end{proof}

\begin{cor}
  \label{cor:autoproper}
  Let $\star$ be a symmetric composition on $(S,n)$. Every
  automorphism $\star\to\star$ is a proper isometry.
\end{cor}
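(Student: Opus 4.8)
The plan is to obtain the corollary as the ``diagonal'' special case $f=g=h$ of Lemma~\ref{lem:3}. Let $f\colon\star\to\star$ be an automorphism, so that $f\in\GO(n)$ and $f(x\star y)=f(x)\star f(y)$ for all $x$, $y\in S$. First I would observe that this is precisely statement~($a^+$) of Lemma~\ref{lem:3} with $g=h=f$ and the scalar $\lambda=1$. Hence all of the equivalent statements ($a^+$)--($f^+$) hold, and the final clause of the first part of Lemma~\ref{lem:3} gives at once that $f$ --- which here plays the role of each of $f$, $g$, $h$ simultaneously --- is a \emph{proper} similarity.

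It then remains to see that $f$ is moreover an \emph{isometry}, i.e.\ that $\mu(f)=1$. I would argue directly: applying $n$ to both sides of $f(x\star y)=f(x)\star f(y)$ and using condition~(1) of Definition~\ref{def:symcomp} on each side together with $n\circ f=\mu(f)\,n$ yields
\[
\mu(f)\,\lambda_\star\,n(x)\,n(y)=\lambda_\star\,\mu(f)^2\,n(x)\,n(y)\qquad\text{for all }x,\,y\in S.
\]
Choosing $x$ and $y$ anisotropic (generic vectors of $S$ are anisotropic, as already used in the proof of Lemma~\ref{lem:3}) forces $\mu(f)=\mu(f)^2$, hence $\mu(f)=1$. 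Alternatively, one reads this off the multiplier relations of Lemma~\ref{lem:3}: with $f=g=h$ and $\lambda=1$, the relations $\lambda\mu=\mu(h)$, $\mu\nu=\mu(f)$, $\lambda\nu=\mu(g)$ give successively $\mu=\mu(f)$, then $\nu=1$, then $\mu(f)=1$.

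I do not expect any genuine obstacle here. The only point worth noting is that the defining property of an automorphism of $\star$ is exactly the instance $g=h=f$, $\lambda=1$ of condition~($a^+$), so that both conclusions --- properness and $\mu(f)=1$ --- are immediate consequences of Lemma~\ref{lem:3}.
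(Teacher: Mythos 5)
Your proof is correct and takes essentially the same approach as the paper: both treat $f(x\star y)=f(x)\star f(y)$ as the instance $g=h=f$, $\lambda=1$ of condition~($a^+$) in Lemma~\ref{lem:3} and deduce properness from the last clause of that lemma. The explicit check that $\mu(f)=1$ is a sound addition which the paper leaves implicit; it also drops out immediately from the general multiplier relation $\lambda_\star=\lambda_\dia\,\mu(f)$ for a similarity $f\colon\star\to\dia$, specialized to $\dia=\star$.
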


\begin{proof}
  For any automorphism $f$, condition~($a^+$) of Lemma~\ref{lem:3}
  holds with $\lambda=1$ and $g=h=f$.
\end{proof}

\begin{thm}
  \label{thm:4}
  Let $\star$ be a symmetric composition on $(S,n)$.
  \begin{enumerate}
  \item[(1)]
  For any $f\in\GO^+(n)$, there exist $g$, $h\in\GO^+(n)$ such that
  the equivalent conditions $(a^+)$--$(f^+)$ in Lemma~\ref{lem:3}
  hold. 
  \item[(2)]
  For any improper similarity $f\in\GO(n)$, there exist improper
  similarities $g$, $h\in\GO(n)$ such that the equivalent conditions
  $(a^-)$--$(f^-)$ in Lemma~\ref{lem:3} hold.
  \end{enumerate}
  In each case, the similarities $g$ and $h$ are uniquely determined
  up to multiplication by scalars, so $[g]$ and 
  $[h]\in\PGO(n)$ are uniquely determined.
\end{thm}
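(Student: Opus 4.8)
The plan is to prove existence and uniqueness separately, reducing both to the algebra isomorphism $\alpha_0^\star$ of Proposition~\ref{prop:triasimilituderevone} together with the Skolem--Noether theorem. For part~(1), start with $f\in\GO^+(n)$. Since $f$ is proper, $C_0[f]$ fixes the center of $C_0(S,n)$, so under $\alpha_0^\star$ the automorphism $\alpha_0^\star\circ C_0[f]\circ(\alpha_0^\star)^{-1}$ of $(\End_FS)\times(\End_FS)$ fixes each factor rather than swapping them. Hence it has the form $\Int[g]\times\Int[h]$ for some invertible $g$, $h\in\End_FS$, uniquely determined up to scalars; this is precisely condition $(d^+)$ of Lemma~\ref{lem:3}. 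It remains only to check that $g$ and $h$ can be chosen in $\GO^+(n)$. By Lemma~\ref{lem:3} (applied once $g$, $h$ are known to lie in $\GO(n)$), the relations $\lambda\mu=\mu(h)$, etc., together with $(a^+)$ show the multiplier compatibility; and the fact that $\alpha_0^\star$ is an isomorphism of algebras \emph{with involution} $\sigma\leftrightarrow\ad_n\times\ad_n$ forces $\Int[g]$ and $\Int[h]$ to commute with $\ad_n$, i.e.\ $[g]$, $[h]\in\PGO(n)$, so one may pick representatives $g$, $h\in\GO(n)$. Properness of $g$ and $h$ then follows from the last clause of Lemma~\ref{lem:3}.

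For part~(2), the argument is identical except that an improper $f$ induces a nontrivial automorphism of the center of $C_0(S,n)$, so $\alpha_0^\star\circ C_0[f]\circ(\alpha_0^\star)^{-1}$ \emph{swaps} the two factors $(\End_FS)\times(\End_FS)$. After composing with the swap, Skolem--Noether again produces invertible $g$, $h$ with the resulting automorphism equal to $\Theta(g,h)$, which is exactly the commutativity of $D_\star^-(f,h,g)$, i.e.\ condition $(d^-)$. The involution-compatibility of $\alpha_0^\star$ again pins $[g]$, $[h]$ down in $\PGO(n)$, and the final clause of the second half of Lemma~\ref{lem:3} gives that $g$, $h$ are improper.

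The uniqueness claim is the cleanest part: if $(g,h)$ and $(g',h')$ both satisfy $(d^+)$ (resp.\ $(d^-)$), then $\Int[g]=\Int[g']$ and $\Int[h]=\Int[h']$ as automorphisms of $\End_FS$, so $g'g^{-1}$ and $h'h^{-1}$ are central in $\End_FS$, hence scalars; thus $[g]=[g']$ and $[h]=[h']$ in $\PGO(n)$. Alternatively one can invoke the uniqueness built into Skolem--Noether directly.

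I expect the main obstacle to be the bookkeeping that ensures $g$ and $h$ actually land in $\GO(n)$ (not merely in $\mathbf{GL}(S)$) and carry the correct multipliers and proper/improper type. The existence of \emph{some} invertible $g$, $h$ is immediate from Skolem--Noether; the content is that the compatibility of $\alpha_0^\star$ with the involutions $\sigma$ and $\ad_n\times\ad_n$, recorded in Proposition~\ref{prop:triasimilituderevone}, translates "$\Int[g]\times\Int[h]$ commutes with $\ad_n\times\ad_n$" into "$[g]$, $[h]\in\PGO(n)$", and that the multiplier relations of Lemma~\ref{lem:3} then force the $\GO^+$ (resp.\ improper) membership. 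Once this translation is in hand, everything else is the formal equivalence $(d^+)\Leftrightarrow(a^+)$ (resp.\ $(d^-)\Leftrightarrow(a^-)$) already supplied by Lemma~\ref{lem:3}.
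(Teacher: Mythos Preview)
Your proposal is correct and follows essentially the same route as the paper: transport $C_0[f]$ through $\alpha_0^\star$, use that it fixes or swaps the two factors according to whether $f$ is proper or improper, invoke Skolem--Noether and the involution-compatibility of $\alpha_0^\star$ to land $[g],[h]$ in $\PGO(n)$, and then read off properness/improperness and uniqueness from Lemma~\ref{lem:3}. The paper's own proof is terser but the logical skeleton is identical.
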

  
\begin{proof}
(1)
The automorphism
$C_0[f]$ of $C_0(S,n)$ preserves the canonical involution $\sigma$ and
restricts to the identity on the center. Therefore, we may find $g$,
$h\in\GO(n)$ such that the diagram $D_\star^+(f,h,g)$ commutes. The elements
$[g]$, $[h]\in\PGO(n)$ are uniquely determined by this condition (or
by any other in the list ($a^+$)--($f^+$) from Lemma~\ref{lem:3}),
and Lemma~\ref{lem:3} shows that $g$ and $h$ are proper
similarities.

(2)
If $f$ is improper, then $C_0[f]$ restricts to the nontrivial
automorphism of the center of $C_0(S,n)$, hence it fits in a diagram
$D_\star^-(f,h,g)$ for some similarities $g$, $h\in\GO(n)$, which are
improper by Lemma~\ref{lem:3}.
\end{proof}

Note that scaling $g$ and/or $h$ we may change the scalars $\lambda$,
$\mu$, $\nu$ in conditions ($a^+$)--($c^+$) or ($a^-$)--($c^-$). We
may for instance choose $g$ and $h$ so that $\lambda=1$, or, as in
\cite[(35.4)]{KMRT}, so that $\lambda=\mu(f)^{-1}$, $\mu=\mu(g)^{-1}$,
and $\nu=\mu(h)^{-1}$. In this case, $\mu(f)\mu(g)\mu(h)=1$.

\medskip

In view of Theorem~\ref{thm:4}, we may define a map
\[
\rho_\star\colon\PGO^+(n)\to\PGO^+(n)
\]
by carrying any $[f]\in\PGO^+(n)$ to the unique $[g]\in\PGO^+(n)$ such
that the diagram $D_\star^+(f,g,h)$ commutes for some $h\in\GO^+(n)$.
Since $\rho_\star$ is defined in a functorial way, it actually defines a
map of $F$-algebraic groups
\[
\rho_\star\colon\gPGO^+(n)\to\gPGO^+(n).
\]

\begin{thm}
  \label{thm:defcorresp}
  The map $\rho_\star$ is an outer automorphism of order~$3$ of
  $\gPGO^+(n)$
  and $\rho_\star^{-1} =\rho_{\star^{\op}}$.
\end{thm}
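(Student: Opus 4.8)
The plan is to verify the three assertions about $\rho_\star$ in order: that it is an automorphism of $\gPGO^+(n)$, that it is \emph{outer}, and that it has order exactly $3$ with inverse $\rho_{\star^{\op}}$.

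First, to see that $\rho_\star$ is a group homomorphism and in fact an automorphism, I would use the diagrammatic characterization directly. Given $[f_1]$, $[f_2]\in\PGO^+(n)$ with associated pairs $([g_1],[h_1])$ and $([g_2],[h_2])$ coming from commuting diagrams $D_\star^+(f_1,g_1,h_1)$ and $D_\star^+(f_2,g_2,h_2)$, one stacks the two diagrams vertically: since $C_0[f_1 f_2]=C_0[f_1]\circ C_0[f_2]$ (functoriality of the even Clifford construction) and $\Int[g_1 g_2]=\Int[g_1]\circ\Int[g_2]$, $\Int[h_1 h_2]=\Int[h_1]\circ\Int[h_2]$, the outer rectangle is $D_\star^+(f_1 f_2, g_1 g_2, h_1 h_2)$ and commutes. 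By the uniqueness clause of Theorem~\ref{thm:4}, this forces $\rho_\star([f_1][f_2])=[g_1 g_2]=[g_1][g_2]=\rho_\star([f_1])\rho_\star([f_2])$, so $\rho_\star$ is a homomorphism of $F$-algebraic groups; it carries $[\Id]$ to $[\Id]$ since $D_\star^+(\Id,\Id,\Id)$ trivially commutes. Bijectivity will follow once we know $\rho_\star^3=\Id$, which we address last.

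Next I would establish that $\rho_\star$ is outer. The point is that $\rho_\star$ does \emph{not} respect the Clifford-algebra structure in the same way an inner automorphism would: an inner automorphism $\Int[f]$ of $\gPGO^+(n)$ induces $C_0[f]$ on $C_0(S,n)$ acting on its center trivially and commuting with the given identification, whereas by the very definition of $\rho_\star$, the diagram $D_\star^+(f,g,h)$ shows that $C_0[\rho_\star(f)]=C_0[g]$ is conjugated through $\alpha_0^\star$ to $\Theta$-twisted data that \emph{permutes the two factors} $(\End_F S)\times(\End_F S)$. Concretely, the vector representation of $\gPGO^+(n)$ and the two half-spin representations are cyclically permuted by $\rho_\star$ (this is exactly the content of the relations $(a^+)$, $(b^+)$, $(c^+)$ of Lemma~\ref{lem:3}, in which $f$, $g$, $h$ occupy cyclically rotated roles). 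Since these three $8$-dimensional representations of the simply connected cover $\gSpin(n)$ are pairwise non-isomorphic, no inner automorphism can realize this permutation; hence $\rho_\star$ induces a nontrivial element of $\Sym_3$ in the sequence \eqref{eq:exactseq}, i.e.\ it is outer. Alternatively, and perhaps more cleanly, one invokes that the only automorphisms of $\gPGO^+(n)$ acting trivially on the Dynkin diagram are inner, together with the fact that $\rho_\star$ moves the vertex classes of the diagram.

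Finally, and this is the step I expect to be the main obstacle, I would compute $\rho_\star^3$ and identify $\rho_\star^{-1}$. Iterating the defining diagram three times: if $D_\star^+(f,g,h)$ commutes, then $(e^+)$ and $(f^+)$ of Lemma~\ref{lem:3} tell us that $D_\star^+(g,h,f)$ and $D_\star^+(h,f,g)$ also commute, so $\rho_\star([f])=[g]$, $\rho_\star([g])=[h]$, $\rho_\star([h])=[f]$; therefore $\rho_\star^3([f])=[f]$ for all $[f]$, giving $\rho_\star^3=\Id$. Combined with outerness (so $\rho_\star\neq\Id$), this shows $\rho_\star$ has order exactly $3$, and in particular $\rho_\star$ is an automorphism with $\rho_\star^{-1}=\rho_\star^2$. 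To recognize $\rho_\star^2=\rho_{\star^{\op}}$, I would compare the defining conditions for the two compositions: condition $(a^+)$ for $\star^{\op}$ reads $\lambda f(y\star x)=g(y)\star h(x)$, which after relabelling is precisely condition $(a^-)$ for $\star$ with the roles of $g$ and $h$ (and the sense of composition) interchanged. Tracking this through the equivalences $(a^-)$--$(f^-)$ of Lemma~\ref{lem:3} shows that the pair assigned to $[f]$ by $\rho_{\star^{\op}}$ is $([h],[g])$ whenever $D_\star^+(f,g,h)$ commutes, i.e.\ $\rho_{\star^{\op}}([f])=[h]=\rho_\star([g])=\rho_\star^2([f])=\rho_\star^{-1}([f])$. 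The care needed here is purely bookkeeping: one must be consistent about which of the three slots is ``input'' and which is ``output'' in $D^+$ versus $D^-$, and about the correspondence between $(a^-)$ with composition $\star$ and $(a^+)$ with composition $\star^{\op}$; once the dictionary is fixed, the identity $\rho_\star^{-1}=\rho_{\star^{\op}}$ drops out.
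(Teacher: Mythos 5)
Your proof is correct in substance and follows the same strategy as the paper: use the equivalences of Lemma~\ref{lem:3} and the cyclic structure of the diagrams $D_\star^+(f,g,h)\Rightarrow D_\star^+(g,h,f)\Rightarrow D_\star^+(h,f,g)$ to get $\rho_\star^3=\Id$, verify multiplicativity, and argue outerness. Your multiplicativity argument (stacking diagrams and using functoriality of $C_0$) is a legitimate variant of the paper's, which instead multiplies the scalar relations from $(a^+)$ directly: from $\lambda_i f_i(x\star y)=g_i(x)\star h_i(y)$ one gets $\lambda_1\lambda_2\,f_1f_2(x\star y)=g_1g_2(x)\star h_1h_2(y)$, hence $\rho_\star[f_1f_2]=[g_1g_2]$. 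Both routes are fine. For outerness the paper simply cites \cite[(35.6)]{KMRT} with a parenthetical remark that $\rho_\star$ lifts to an automorphism of $\gSpin(n)$ acting nontrivially on the center; your argument via the cyclic permutation of the vector and two half-spin representations (equivalently, the action on the Dynkin diagram) is essentially the same mechanism made explicit. Notably, you also spell out the verification of $\rho_\star^{-1}=\rho_{\star^{\op}}$, which the paper's proof does not actually display; this is a genuine completion.

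One bookkeeping slip worth correcting: you assert that condition $(a^+)$ for $\star^{\op}$ is ``precisely condition $(a^-)$ for $\star$ with $g$ and $h$ interchanged.'' That identification cannot hold, since $(a^-)$ forces $f$, $g$, $h$ to be \emph{improper} while $\rho_{\star^{\op}}$ is defined on $\gPGO^+(n)$ using the $(+)$ family. The correct unpacking is: $(a^+)$ for $\star^{\op}$ reads $\lambda f(x\star^{\op} y)=g'(x)\star^{\op}h'(y)$, i.e.\ $\lambda f(y\star x)=h'(y)\star g'(x)$, and after relabelling $x\leftrightarrow y$ this is $(a^+)$ for $\star$ with the roles of the two slots swapped, i.e.\ commutativity of $D_\star^+(f,h',g')$. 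Matching with the given commuting $D_\star^+(f,g,h)$ gives $h'=g$ and $g'=h$, whence $\rho_{\star^{\op}}[f]=[g']=[h]=\rho_\star^2[f]=\rho_\star^{-1}[f]$, which is exactly your final conclusion. So your end result is right; only the claimed passage through $(a^-)$ is wrong.
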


\begin{proof}  
  Since commutation of $D_\star^+(f,g,h)$ implies that $D_\star^+(g,h,f)$ and
  $D_\star^+(h,f,g)$ commute, we have
  \[
  \rho_\star[g]=[h]\qquad\text{and}\qquad
  \rho_\star[h] = [f],
  \]
  hence $\rho_\star^3=\Id$. Now, let $f_1$, $f_2\in\GO^+(n)$ and
  assume $g_1$, $g_2$, $h_1$, $h_2\in\GO^+(n)$ are such that
  $D_\star^+(f_1,g_1,h_1)$ and $D_\star^+(f_2,g_2,h_2)$ commute. By
  Lemma~\ref{lem:3} we may find scalars $\lambda_1$, $\lambda_2\in
  F^\times$ such that for all $x$, $y\in S$
  \[
  \lambda_1f_1(x\star y)=g_1(x)\star h_1(y)\qquad\text{and}\qquad
  \lambda_2f_2(x\star y)=g_2(x)\star h_2(y),
  \]
  hence
  \[
  \lambda_1\lambda_2f_1f_2(x\star y) = \lambda_1f_1\bigl(g_2(x)\star
  h_2(y)\bigr) = g_1g_2(x)\star h_1h_2(y).
  \]
  Therefore, $\rho_\star[f_1f_2]=[g_1g_2]$, showing that
  $\rho_\star$ is an automorphism. We refer to \cite[(35.6)]{KMRT} for
  the fact that $\rho_\star$ is an outer algebraic group
  automorphism. (That it is outer also follows from the
  fact that $\rho_\star$ lifts to an automorphism of $\gSpin(n)$ that
  is not the identity on the center, see~\eqref{equ:exactseqspin}).
\end{proof}

The definition of $\rho_\star$ above depends on the isomorphism
$\alpha_0^\star$ through the diagram $D_\star^+(f,g,h)$. We next show
that, conversely, the automorphism $\rho_\star$ determines
$\alpha_0^\star$.

\begin{prop}
  \label{prop:rhoalpha}
  Let $\star$ and $\dia$ be symmetric compositions on $(S,n)$. We have
  $\rho_\star=\rho_\dia$ if and only if $\alpha^\star_0=\alpha^\dia_0$.
\end{prop}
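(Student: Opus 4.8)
The plan is to prove the two implications separately. From right to left there is nothing to prove: the diagram $D_\star^+(f,g,h)$ involves $\star$ only through the horizontal maps $\alpha_0^\star$, so if $\alpha_0^\star=\alpha_0^\dia$ then $D_\star^+(f,g,h)$ and $D_\dia^+(f,g,h)$ are literally the same diagram for all $f$, $g$, $h$, and hence $\rho_\star$ and $\rho_\dia$ are defined by one and the same condition. So the content is the converse, and I would prove it by showing that $\gamma:=\alpha_0^\dia\circ(\alpha_0^\star)^{-1}$ --- an automorphism of the $F$-algebra with involution $B:=(\End_FS,\ad_n)\times(\End_FS,\ad_n)$, since $\alpha_0^\star$ and $\alpha_0^\dia$ are isomorphisms by Proposition~\ref{prop:triasimilituderevone} --- is the identity.

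The first step is to turn the hypothesis $\rho:=\rho_\star=\rho_\dia$ into a commutation property of $\gamma$. Given $[f]\in\gPGO^+(n)$, put $[g]=\rho[f]$ and $[h]=\rho[g]$. Then $D_\star^+(f,g,h)$ commutes by the definition of $\rho=\rho_\star$ (see the proof of Theorem~\ref{thm:defcorresp}), and likewise $D_\dia^+(f,g,h)$ commutes since $\rho=\rho_\dia$; that is,
\[
\alpha_0^\star\circ C_0[f]=(\Int[g]\times\Int[h])\circ\alpha_0^\star,\qquad
\alpha_0^\dia\circ C_0[f]=(\Int[g]\times\Int[h])\circ\alpha_0^\dia .
\]
Eliminating $C_0[f]$ between these two identities shows that $\gamma$ commutes with $\Int[g]\times\Int[h]$. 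As $\rho$ is an automorphism of the algebraic group $\gPGO^+(n)$ defined precisely by such commuting diagrams, these are identities of morphisms of $F$-schemes, and since $[g]=\rho[f]$ ranges over $\gPGO^+(n)$ as $[f]$ does, we conclude that $\gamma$ commutes with $\Int[g]\times\Int\rho[g]$ for every $[g]\in\gPGO^+(n)$ (over every $F$-algebra).

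The second step is a case distinction on whether $\gamma$, viewed as an automorphism of $B\cong M_8(F)\times M_8(F)$, preserves or interchanges the two factors. If it interchanges them, compatibility of $\gamma$ with the involution forces $\gamma(a,b)=(\Int[p](b),\Int[q](a))$ for some $[p]$, $[q]\in\gPGO(n)(F)$, and feeding this into the commutation relation yields $[p]\,\rho[g]\,[p]^{-1}=[g]$ for all $[g]\in\gPGO^+(n)$, i.e., $\rho^{-1}=\Int[p]$ on $\gPGO^+(n)$. If $[p]$ is proper this makes $\rho$ inner; if $[p]$ is improper then $[p]^2$ is proper, so $\rho^{-2}=\Int([p]^2)$ is inner, hence $\rho=\rho^{-2}$ is inner (as $\rho^3=\Id$) --- either way contradicting Theorem~\ref{thm:defcorresp}. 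So $\gamma$ preserves the two factors, $\gamma=\Int[p]\times\Int[q]$ with $[p]$, $[q]\in\gPGO(n)(F)$, and the commutation relation now says precisely that $[p]$ and $[q]$ centralize $\gPGO^+(n)$. A proper element centralizing $\gPGO^+(n)$ lies in its center, which is trivial because $\gPGO^+(n)$ is adjoint; an improper one would induce the identity automorphism of $\gPGO^+(n)$, contradicting the fact recalled in Section~2 that conjugation by an improper similarity is outer. Hence $[p]=[q]=1$, $\gamma=\Id$, and $\alpha_0^\star=\alpha_0^\dia$.

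The step I expect to be the real obstacle is ruling out the factor-swapping case. That is exactly where one must use that $\rho$ is genuinely \emph{trialitarian}, i.e., outer of order $3$: a factor-swap would exhibit $\rho$ as conjugation by a similarity, which for a hypothetical order-$2$ (``dualitarian'') analogue would be harmless, but for order $3$ is impossible because it would then force a power of $\rho$ to be inner. Once that case is excluded, identifying $\gamma$ with a pair of inner automorphisms and invoking the triviality of the center of the adjoint group $\gPGO^+(n)$ is routine.
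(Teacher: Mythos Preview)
Your proof is correct and follows essentially the same approach as the paper: both set $\gamma=\alpha_0^\dia\circ(\alpha_0^\star)^{-1}$, deduce from $\rho_\star=\rho_\dia$ that $\gamma$ commutes with $\Int(\rho_\star[f])\times\Int(\rho_\star^2[f])$ for all $[f]$, rule out the factor-swapping case by showing it would make $\rho_\star$ inner (the paper does this in one stroke via $\rho_\star=\Int[s_1^2]$ with $s_1^2$ proper, where you split on the parity of $p$), and finish the factor-preserving case using triviality of the center of $\gPGO^+(n)$ together with outerness of conjugation by improper similarities.
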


\begin{proof}
  If $\alpha^\star_0=\alpha^\diamond_0$, then the diagrams
  $D_\star^+(f,g,h)$ and $D_\dia^+(f,g,h)$ coincide for all $f$, $g$,
  $h\in\GO^+(n)$, hence $\rho_\star=\rho_\dia$. Conversely, suppose that
  $\rho_\star=\rho_\diamond$. Then $\alpha^\diamond_0 \circ (\alpha^\star_0)^{-1}$
  is an automorphism of $(\End S)\times(\End S)$ that commutes with
  the involution $\ad_{n}\times\ad_{n}$ and that makes the
  following diagram commute for any $f\in\GO^+(n)$
  \begin{equation}
    \label{eq:diag2}
    \xymatrix{
    (\End S)\times(\End S)\ar[rr]^{\alpha^\diamond_0 \circ (\alpha^\star_0)^{-1}}
    \ar[d]_{\Int(\rho_\star[f])\times \Int(\rho^2_\star[f])} && (\End
    S)\times (\End S) \ar[d]^{\Int(\rho_\star[f])\times
      \Int(\rho^2_\star[f])}\\
    (\End S)\times(\End S) \ar[rr]^{\alpha^\diamond_0\circ (\alpha^\star_0)^{-1}} &&
    (\End S)\times(\End S).
    }
  \end{equation}
  Suppose first that $\alpha^\diamond_0\circ (\alpha^\star_0)^{-1}$ exchanges the
  two factors, so there exist automorphisms $\theta_1$, $\theta_2$ of $(\End
  S, \ad_{n})$ such that
  \[
  \alpha^\diamond_0 \circ (\alpha^\star_0)^{-1}(a,b)=\bigl(\theta_1(b),\theta_2(a)\bigr)
  \qquad\text{for all $a$, $b\in\End S$.}
  \]
  Automorphisms of $(\End S,\ad_{n})$ are inner automorphisms
  induced by similarities of $(S,n)$, so we may find $s_1$,
  $s_2\in\GO(n)$ such that $\theta_i=\Int[s_i]$ for $i=1$,
  $2$. By commutativity of diagram~\eqref{eq:diag2}, we have
  \[
  \rho_\star[f]\cdot[s_1]=[s_1]\cdot\rho_\star^2[f] 
  \quad\text{and}\quad
  \rho_\star^2[f]\cdot [s_2]=[s_2]\cdot
  \rho_\star[f]
  \qquad\text{for all $f\in\GO^+(n)$}.
  \]
  Thus, $\Int[s_1]\circ\rho_\star^2=\rho_\star$, hence
  $\Int[s_1]=\rho_\star^2$ and therefore
  $\rho_\star=\Int[s_1^2]$. Since the square of any similarity is a
  proper similarity, it follows that $\rho_\star$ is an inner
  automorphism of $\gPGO^+(n)$, a contradiction. Therefore,
  $\alpha^\diamond_0\circ (\alpha^\star_0)^{-1}$ preserves the two factors, and we
  have
  \[
  \alpha^\diamond_0 \circ (\alpha^\star_0)^{-1}=\Int[s_1]\times\Int[s_2]
  \]
  for some $s_1$, $s_2\in\GO(n)$. Commutativity of
  diagram~\eqref{eq:diag2} yields
  \[
  \rho_\star[f]\cdot [s_1]=[s_1]\cdot \rho_\star[f]
  \quad\text{and}\quad \rho_\star^2[f]\cdot [s_2]=[s_2]\cdot
  \rho_\star^2[f]
  \qquad\text{for all $f\in \GO^+(n)$.}
  \]
  Therefore, $[s_1]$ and $[s_2]$ centralize $\PGO^+(n)$. It follows
  that $s_1$ and $s_2$ are proper similarities, since conjugation by
  an improper similarity is an outer automorphism of
  $\PGO^+(n)$. Since the center 
  of $\PGO^+(n)$ is trivial, it follows that $[s_1]=[s_2]=1$,
  hence $\alpha^\diamond_0 \circ (\alpha^\star_0)^{-1}=\Id$.
\end{proof}

\section{The one-to-one correspondence}

As in the preceding section, we fix a $3$-fold Pfister quadratic space
$(S,n)$ over~$F$.
Our goal is to show that the map
$\star\mapsto \rho_\star$ defines a one-to-one correspondence between
symmetric compositions on $(S,n)$ up to a scalar factor and
trialitarian automorphisms of $\gPGO^+(n)$ defined over $F$. 

\medskip

Recall that given a symmetric composition $\star$ on $(S,n)$, linear
operators $\ell_x^\star$ and $r_x^\star$ on $S$ are defined for any $x\in
S$ by 
\[
\ell_x^\star(y)=x\star y\qquad\text{and}\qquad r_x^\star(y)=y\star x
\qquad\text{for $y\in S$.}
\]
If $x$ is anisotropic, condition~(1) in the definition of symmetric
compositions shows that $\ell_x^\star$ and $r_x^\star$ are
similarities of $(S,n)$ with multiplier $\lambda_\star n(x)$. 
We shall see in
Corollary~\ref{cor:improper} below that $\ell_x^\star$ and $r_x^\star$
are improper similarities for every anisotropic vector $x\in S$. At
this stage, we can at least prove:

\begin{lemma}
  \label{lem:eqimproper}
  The following conditions are equivalent:
  \begin{enumerate}
  \item[(a)]
  there exists an anisotropic vector $x\in S$ such that $\ell_x^\star$
  is an improper similarity;
  \item[(b)]
  there exists an anisotropic vector $x\in S$ such that $r_x^\star$ is
  an improper similarity;
  \item[(c)]
  for every anisotropic vector $x\in S$, the similarity $\ell_x^\star$
  is improper;
  \item[(d)]
  for every anisotropic vector $x\in S$, the similarity $r_x^\star$ is
  improper. 
  \end{enumerate}
\end{lemma}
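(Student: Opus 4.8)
The plan is to deduce (a)$\Leftrightarrow$(b) and (c)$\Leftrightarrow$(d) directly from Lemma~\ref{lem:2.1}, and to obtain (a)$\Leftrightarrow$(c) from a connectedness argument; since $n$ represents~$1$ there exist anisotropic vectors, so (c)$\Rightarrow$(a) and (d)$\Rightarrow$(b) hold trivially, and these three facts together give the equivalence of all four conditions.

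First I would fix an anisotropic $x\in S$. By Lemma~\ref{lem:2.1}, $\ell_x^\star\circ r_x^\star=\lambda_\star n(x)\Id_S$, so $r_x^\star=\lambda_\star n(x)(\ell_x^\star)\inv$. As $\gGO^+(n)$ is a subgroup of $\gGO(n)$ containing the homotheties ($\gGm$ is its center), $r_x^\star$ is proper if and only if $(\ell_x^\star)\inv$ is, if and only if $\ell_x^\star$ is. This gives (a)$\Leftrightarrow$(b) and (c)$\Leftrightarrow$(d).

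Next I would use that the proper similarities form the identity component $\gGO^+(n)$, which is open and closed in $\gGO(n)$. For anisotropic $x$, condition~(1) of Definition~\ref{def:symcomp} gives $n\bigl(\ell_x^\star(y)\bigr)=\lambda_\star n(x)n(y)$, while $\ell_x^\star r_x^\star=\lambda_\star n(x)\Id_S$ shows $\ell_x^\star$ is invertible; hence $\ell_x^\star\in\gGO(n)$. The assignment $x\mapsto\ell_x^\star$ is linear, and the above is stable under scalar extension, so it defines a morphism $\varphi$ from the variety $U$ of anisotropic vectors in $S$ to $\gGO(n)$. Over an algebraic closure, $U$ is the complement of a quadric in affine $8$-space, hence irreducible, so $\varphi(U_{\bar F})$ is connected and lies entirely in $\gGO^+(n)$ or entirely in its complement. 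Reading this off at the $F$-rational points $x\in U(F)$ (which exist because $n$ represents~$1$): either $\ell_x^\star$ is proper for every anisotropic $x\in S$, or $\ell_x^\star$ is improper for every anisotropic $x\in S$. This is (a)$\Leftrightarrow$(c), and with the previous step the lemma follows.

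The main thing to be careful about is that the connectedness step works over an arbitrary field: one needs $\gGO^+(n)$ to be genuinely open and closed in $\gGO(n)$ in all characteristics---this holds for the even-dimensional nonsingular quadratic space $(S,n)$ by \cite[\S23.B]{KMRT}---and one should phrase the irreducibility of $U$ after base change to $\bar F$ so that finite or characteristic-$2$ ground fields cause no trouble, then evaluate at the rational point $x$. An alternative for (a)$\Leftrightarrow$(c) is to show that $\ell_x^\star\circ(\ell_y^\star)\inv$ is always proper, since up to a scalar it is the second component of $\alpha_0^\star(x\cdot y)$ with $x\cdot y$ in the even Clifford group, but that argument rests on essentially the same connectedness input.
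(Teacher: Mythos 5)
Your proof is correct and takes essentially the same approach as the paper: the equivalences (a)$\Leftrightarrow$(b) and (c)$\Leftrightarrow$(d) follow from $r_x^\star\circ\ell_x^\star$ being a scalar times the identity (Lemma~\ref{lem:2.1}), and (a)$\Leftrightarrow$(c) follows from an irreducibility/connectedness argument forcing $x\mapsto\ell_x^\star$ to land in a single coset of $\gGO^+(n)$. The only cosmetic difference is that the paper connects two anisotropic vectors by the line $t\mapsto x(1-t)+yt$ and invokes the irreducibility of $\mathbb{A}^1$, whereas you work directly with the irreducibility of the whole variety of anisotropic vectors.
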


\begin{proof}
  By Lemma~\ref{lem:2.1} we have for all $x\in S$
  \[
  r_x^\star\circ\ell_x^\star=\lambda_\star n(x)\cdot\Id_S,
  \]
  and $\lambda_\star n(x)\cdot \Id_S$ is a proper similarity. Thus
  $r_x^\star$ and  $\ell_x^\star$ are either both proper or
  improper,  proving
  (a)$\iff$(b) and (c)$\iff$(d). Since (c)~$\Rightarrow$~(a) is clear,
  it only remains to show (a)~$\Rightarrow$~(c). For this, we use a
  homotopy argument. Let $x$, $y\in S$ be anisotropic vectors, and let
  $t$ be an indeterminate over $F$. Consider the vector  $u(t)= x(1-t)+yt\in
  S(t)=S_{F(t)}$. It gives rise to a rational morphism  
  \[
  \phi \colon {\mathbb A^1} \dashrightarrow \gGO(S,n), 
  \]
  induced by $t \mapsto \ell^\star_{u(t)}$,
  of the affine line over $F$ to $ \gGO(S,n)$.
   Note that $\phi$ is defined at $a$ if the vector $u(a)= x(1-a)+ya$ is anisotropic.
  Since $\GO(S,n)$ contains an improper similarity $g$ of $S$,
  the variety of  $\gGO(S,n)$ 
  is the disjoint union of two irreducible subvarieties  $\gGO^+(S,n)$
  and  $g\gGO^+(S,n)$.  
  Since $\mathbb A ^1$ is
  irreducible the image of $\phi$ is contained either in
  $\gGO^+(S,n)$ or in $g\gGO^+(S,n)$. It follows immediately 
  that the similarities $\phi(0) =x$ and $\phi(1) =y$ are both proper
  or both improper.
  \end{proof}

\begin{lemma}
  \label{lem:fond}
  Let $f$, $g\in \GO(n)$ and let $\star$ be a symmetric composition on
  $(S,n)$. The following conditions are equivalent:
  \begin{enumerate}
  \item[(a)]
  the map $\diamond\colon S\times S\to S$ defined by
  \[
  x\diamond y=f(x)\star g(y)\qquad\text{for $x$, $y\in S$}
  \]
  is a symmetric composition on $(S,n)$;
  \item[(b)]
  for all $x$, $y\in S$, 
  \[
  f(x)\star g(y)= \mu(g) g^{-1}\bigl(x\star f(y)\bigr);
  \]
  \item[(c)]
  $f$ and $g$ are proper similarities such that
  \[
  \rho^2_\star[f]\cdot \rho_\star[f]\cdot[f]=1\quad\text{and}\quad
  [g]=\rho_\star^2[f]^{-1}\quad\text{in $\PGO^+(n)$}.
  \]
  \end{enumerate}
  When they hold, the multiplier of $\diamond$ is
  $\lambda_\diamond=\mu(f)\mu(g)\lambda_\star$ and
  \[
  \rho_\diamond=\Int[f^{-1}]\circ\rho_\star=
  \Int[g^{-1}]\circ\rho_\star\circ\Int[f]. 
  \]
  Moreover, assuming \emph{(a)--(c)} hold, the following conditions on
  $[h]\in\PGO^+(n)$ are equivalent:
  \begin{enumerate}
  \item[(i)]
  $\rho_\diamond=
  \Int[h^{-1}]\circ\rho_\star\circ\Int[h]$,
  \item[(ii)]
  $[f] =\rho_\star[h]\inv \cdot [h]  = [h] \cdot \rho_\dia[h]\inv $.
  \end{enumerate}
\end{lemma}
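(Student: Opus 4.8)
The plan is to establish the equivalences (a)$\iff$(b)$\iff$(c) first, then read off the formula for $\rho_\diamond$, and finally treat the equivalence (i)$\iff$(ii) using that formula.

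For (a)$\iff$(b): condition~(1) for $\diamond$ amounts to $\mu(f)\mu(g)=\lambda_\diamond/\lambda_\star$, which is automatic once we know $\diamond$ satisfies the Pfister-norm condition; the real content is condition~(2), i.e. $b_n(x\diamond y,z)=b_n(x,y\diamond z)$ for all $x,y,z$. Writing this out and using that $f$, $g$ lie in $\GO(n)$ together with the associativity identity of Lemma~\ref{lem:2.1} for $\star$, one transforms the symmetry condition for $\diamond$ into the pointwise identity in~(b). Concretely, I would apply $b_n(f(x)\star g(y),z)=b_n(f(x),g(y)\star z)$ (condition~(2) for $\star$), then move $f$ and $g$ across $b_n$ using their multipliers, and compare with $b_n(x,f(y)\star g(z))$; matching the two forces $g^{-1}(x\star f(y))=\mu(g)^{-1}f(x)\star g(y)$, which is~(b). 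Since $b_n$ is nonsingular this is genuinely an equivalence.

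For (b)$\iff$(c): rewrite (b) as $\mu(g)^{-1}g\bigl(f(x)\star g(y)\bigr)=x\star f(y)$, i.e. $g(f(x)\star g(y))=\mu(g)(x\star f(y))$ for all $x,y$. This is exactly condition~$(b^+)$ of Lemma~\ref{lem:3} applied to the triple $(g^{-1},\,?,\,?)$—more precisely, I would recognize it as saying that a suitable diagram $D_\star^+$ commutes. The cleanest route: (b) says $\mu(g)^{-1}g\circ\ell^\star_{f(x)}\circ g = \ell^\star_x\circ f$ as operators, after substituting $y\mapsto y$; unwinding, this is the statement that $D_\star^+(f,g^{-1}\!,\,h)$ commutes for an appropriate $h$, equivalently (by the definition of $\rho_\star$) that $\rho_\star[f]=[g^{-1}]$ together with $\rho_\star[g^{-1}]=[h]$ and $\rho_\star[h]=[f]$. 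The condition $\rho_\star^3=\Id$ (Theorem~\ref{thm:defcorresp}) then gives $\rho_\star^2[f]\cdot\rho_\star[f]\cdot[f]=1$ and $[g]=\rho_\star^2[f]^{-1}$, which is~(c); conversely (c) lets us run the argument backwards, using Lemma~\ref{lem:3} to produce the scalar in~(b). Properness of $f$, $g$ comes for free from the final assertion of Lemma~\ref{lem:3}. The multiplier formula $\lambda_\diamond=\mu(f)\mu(g)\lambda_\star$ drops out of condition~(1) for $\diamond$.

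For the formula $\rho_\diamond=\Int[f^{-1}]\circ\rho_\star=\Int[g^{-1}]\circ\rho_\star\circ\Int[f]$: apply the definition of $\rho_\diamond$ directly. Given $[p]\in\PGO^+(n)$, choose $q$, $s$ with $D_\diamond^+(p,q,s)$ commuting; using $x\diamond y=f(x)\star g(y)$ and relation~(b), translate the defining identity $\lambda p(x\diamond y)=q(x)\diamond s(y)$ into an identity of the form $\lambda' (fpf^{-1})(x\star y)=(\cdots)(x)\star(\cdots)(y)$, so that $\rho_\star[fpf^{-1}]$ is computed in terms of $[q]$; comparing yields $\rho_\diamond[p]=[f^{-1}]\rho_\star[fpf^{-1}]\cdot\text{(correction)}$, and a short computation using $\rho_\star[f]=[g^{-1}]$ collapses this to $\Int[f^{-1}]\circ\rho_\star$ and to $\Int[g^{-1}]\circ\rho_\star\circ\Int[f]$ (the two being equal precisely because $\rho_\star\circ\Int[f]=\Int[\rho_\star[f]]\circ\rho_\star=\Int[g^{-1}]\circ\rho_\star$ wait—more carefully, $\rho_\star\circ\Int[f]=\Int[\rho_\star[f]]^{-1}\cdot$? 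I would verify the identity $\rho_\star\circ\Int[f]\circ\rho_\star^{-1}=\Int[\rho_\star[f]]$, valid for any automorphism, and combine with $\rho_\star[f]=[g^{-1}]$).

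Finally, (i)$\iff$(ii): by the formula just proved applied with $h$ in place of $f$, the composition $\star^{\diamond}:(x,y)\mapsto h(x)\star h(y)$ is a symmetric composition whose associated automorphism is $\Int[h^{-1}]\circ\rho_\star\circ\Int[h]$, provided $[h]$ satisfies condition~(c) for $\star$ with $g=h$, i.e. $\rho_\star^2[h]\cdot\rho_\star[h]\cdot[h]=1$ and the two occurrences of the operator agree; but here both slots carry the same $h$, so condition~(b) for the pair $(h,h)$ must hold—this is a constraint, not automatic. The correct reading: condition~(i) says $\rho_\diamond$ and $\rho_\star$ are conjugate by $[h]$; since we already have $\rho_\diamond=\Int[f^{-1}]\circ\rho_\star$, condition (i) becomes $\Int[f^{-1}]\circ\rho_\star=\Int[h^{-1}]\circ\rho_\star\circ\Int[h]=\Int[h^{-1}\cdot\rho_\star(h)^{-1}]\circ\rho_\star$ after using $\rho_\star\circ\Int[h]=\Int[\rho_\star[h]]\circ\rho_\star$. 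Since $\gPGO^+(n)$ has trivial center, $\Int$ is injective, so this says $[f^{-1}]=[h^{-1}]\cdot\rho_\star[h]^{-1}$, i.e. $[f]=\rho_\star[h]\cdot[h]$—I will double-check the order; the stated form is $[f]=\rho_\star[h]^{-1}\cdot[h]$, so I would recompute the conjugation identity with care about whether $\rho_\star$ acts on the left or right, getting $\rho_\star\circ\Int[h]\circ\rho_\star^{-1}=\Int[\rho_\star[h]]$ hence $\Int[h^{-1}]\circ\rho_\star\circ\Int[h]=\Int[h^{-1}\rho_\star[h]]\circ\rho_\star$ would give $[f]=[h^{-1}\rho_\star[h]]^{-1}$... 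In any case the passage from (i) to the equation is: conjugation identity $+$ centerlessness $+$ injectivity of $\Int$. The second equality in (ii), $[f]=[h]\cdot\rho_\diamond[h]^{-1}$, then follows by substituting $\rho_\diamond=\Int[f^{-1}]\circ\rho_\star$ and using the first equality, or symmetrically by running the same computation with the roles of $\star$ and $\diamond$ interchanged (legitimate since (a)--(c) are symmetric in an appropriate sense, $\star$ being recovered from $\diamond$ via $f^{-1}$, $g^{-1}$).

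\textbf{Expected main obstacle.} The only delicate point is bookkeeping: keeping straight the directions of all the conjugations and which of $\rho_\star$, $\rho_\star^2$ appears where, since $\rho_\star$ is order~$3$ and non-commuting with the $\Int$'s. Once the single identity $\rho_\star\circ\Int[p]\circ\rho_\star^{-1}=\Int[\rho_\star[p]]$ is fixed as a lemma and used uniformly, together with triviality of the center of $\gPGO^+(n)$ (so that $\Int$ is injective on $\PGO^+(n)$), every step reduces to a mechanical manipulation in the group, with Lemma~\ref{lem:3} supplying the scalars and properness assertions.
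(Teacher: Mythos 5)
Your overall structure is the same as the paper's: (a)$\iff$(b) via nonsingularity of $b_n$, (b)$\iff$(c) via the diagram $D_\star^+$ and Lemma~\ref{lem:3}, the formula for $\rho_\diamond$ by a direct computation, and (i)$\iff$(ii) via the conjugation identity $\rho_\star\circ\Int[h]\circ\rho_\star^{-1}=\Int(\rho_\star[h])$ together with centerlessness of $\gPGO^+(n)$. Two points need repair.

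First, your diagram identification in (b)$\Rightarrow$(c) is off. From (b), substituting $y\mapsto f^{-1}(y)$ gives
\[
\mu(g)\,g^{-1}(x\star y)=f(x)\star gf^{-1}(y),
\]
which is condition $(a^+)$ of Lemma~\ref{lem:3} for the triple $(g^{-1},\,f,\,gf^{-1})$, i.e.\ the diagram $D_\star^+(g^{-1},f,gf^{-1})$ commutes, so $\rho_\star[g^{-1}]=[f]$ — equivalently $\rho_\star^2[f]=[g^{-1}]$. You wrote $D_\star^+(f,g^{-1},h)$ with $\rho_\star[f]=[g^{-1}]$, which would give $[g]=\rho_\star[f]^{-1}$, contradicting the $[g]=\rho_\star^2[f]^{-1}$ you then state. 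The latter is correct, but it is not derived from what precedes it; you appear to have copied it from the target statement rather than from your intermediate relation. This is exactly the bookkeeping hazard you flagged, and it needs to be fixed consistently: the cocycle relation $\rho_\star^2[f]\cdot\rho_\star[f]\cdot[f]=1$ should also be derived from $\rho_\star^2[g^{-1}]=[gf^{-1}]$ (the second relation supplied by the commuting diagram) rather than asserted.

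Second, and more substantially, the direction (c)$\Rightarrow$(b) is not dispatched by \emph{``running the argument backwards.''} Condition (c) gives, via Lemma~\ref{lem:3}, that $D_\star^+(g^{-1},f,gf^{-1})$ commutes, hence produces scalars $\lambda$, $\mu$, $\nu$ with $\lambda g^{-1}(x\star y)=f(x)\star gf^{-1}(y)$, etc.\ — but Lemma~\ref{lem:3} only gives \emph{some} scalar $\lambda$; it does not give $\lambda=\mu(g)$, which is the specific value needed in (b). And (b) with any other $\lambda$ would \emph{fail} to make $\diamond$ a symmetric composition: tracing the argument for (a)$\iff$(b), the bilinearity condition for $\diamond$ forces $\lambda\mu(g)^{-1}=1$. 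The paper closes this gap with a genuine computation, using all three relations ($a^+$)--($c^+$) in sequence to show
\[
\lambda\mu\nu\,x\star y = \lambda\mu\nu\,gf^{-1}\circ f\circ g^{-1}(x\star y)=x\star y,
\]
hence $\lambda\mu\nu=1$, which combined with $\lambda^2\mu\nu=\mu(g)$ yields $\lambda=\mu(g)$. Your sketch omits this step entirely, and it is not a formality.

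The remaining pieces — (a)$\iff$(b), the formula $\rho_\diamond=\Int[f^{-1}]\circ\rho_\star$, and (i)$\iff$(ii) — match the paper's approach, though the paper's computation of $\rho_\diamond$ is cleaner than what you outline: for $\varphi\in\GO^+(n)$, write $\varphi(x\star y)=\varphi'(x)\star\varphi''(y)$ and observe directly that $\varphi(x\diamond y)=(f^{-1}\varphi'f)(x)\diamond(g^{-1}\varphi''g)(y)$, so $\rho_\diamond[\varphi]=[f^{-1}\varphi'f]$; no \emph{``correction''} term is needed.
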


\begin{proof}
  We first show (a)$\iff$(b). Since $f$ and $g$ are similarities and
  $\star$ is a symmetric composition, we have
  \begin{equation}
    \label{eq:multdiam}
    n\bigl(f(x)\star g(y)\bigr)=\lambda_\star \mu(f)\mu(g) n(x) n(y)
    \qquad\text{for all $x$, $y\in S$,}
  \end{equation}
  hence the map $\diamond$ satisfies condition~(1) in the definition
  of symmetric compositions. Therefore, (a) is equivalent to
  \[
  b_n(f(x)\star g(y),z)= b_n\bigl(x,f(y)\star g(z)\bigr)
  \qquad\text{for all $x$, $y$, $z\in S$.}
  \]
  Since $\star$ is a symmetric composition and $g$ is a similarity, we
  may rewrite the right side as
  \[
  b_n\bigl(x\star f(y),g(z)\bigr)=\mu(g) b_n\bigl(g^{-1}\bigl(x\star
  f(y)\bigr), z\bigr).
  \]
  Therefore, (a) is equivalent to
  \[
  b_n(f(x)\star g(y),z)=\mu(g) b_n\bigl(g^{-1}\bigl(x\star
  f(y)\bigr),z\bigr) \qquad\text{for all $x$, $y$, $z\in S$.}
  \]
  Since $n$ is nonsingular, this condition is also equivalent to~(b).

  Now, assume~(b) holds. Fixing an anisotropic vector $x\in S$ and
  considering each side of the equation in~(b) as a function of $y$,
  we have
  \[
  \ell_{f(x)}^\star\circ g=\mu(g) g^{-1}\circ\ell_x^\star\circ f.
  \]
  By Lemma~\ref{lem:eqimproper}, the similarities $\ell_{f(x)}^\star\circ
  g$ and $\mu(g) g^{-1}\circ\ell_x^\star$ are both proper or both
  improper, hence this equation shows that $f$ is proper. Likewise,
  fixing an anisotropic vector $y$ and considering each side of the
  equation in~(b) as a function of $x$, we have
  \[
  r_{g(y)}^\star\circ f= \mu(g)g^{-1}\circ r^\star_{f(y)}.
  \]
  Since by Lemma~\ref{lem:eqimproper}
  the similarities $r_{g(y)}^\star$
  and $r_{f(y)}^\star$ are either both proper or both improper,
  this equation shows that $g$ is proper because $f$ is proper.

  By Lemma~\ref{lem:3}, condition~(b) is equivalent to the
  commutativity of diagram $D_\star^+(g^{-1},f,gf^{-1})$, hence to
  \begin{equation}
    \label{eq:eqc}
    \rho_\star[g^{-1}]=[f]\quad\text{and}\quad \rho_\star^2[g^{-1}]=[gf^{-1}].
  \end{equation}
  Since $\rho_\star^3=\Id$, it follows that
  $[g]=\rho_\star^2[f]^{-1}$ and
  $\rho_\star^2[f]\cdot\rho_\star[f]\cdot[f]=1$. We have thus proved
  (b)~$\Rightarrow$~(c).
  Conversely, we readily deduce~\eqref{eq:eqc} from (c), hence 
  the diagram $D_\star^+(g^{-1},f,gf^{-1})$ commutes. By
  Lemma~\ref{lem:3}, we may find $\lambda$, $\mu$, $\nu\in F^\star$
  such that for all $x$, $y\in S$
  \begin{align}
    \label{eq:g}
    \lambda\, g^{-1}(x\star y) & =f(x)\star gf^{-1}(y),\\
    \label{eq:g1}
    \mu \, f(x\star y) & = gf^{-1}(x)\star g^{-1}(y),\\
    \label{eq:g2}
    \nu\, gf^{-1}(x\star y) & = g^{-1}(x) \star f(y),
  \end{align}
  and
  \[
  \lambda\mu=\mu(gf^{-1}),\quad \mu\nu=\mu(g^{-1}),\quad
  \lambda\nu=\mu(f).
  \]
  These last equations yield $\lambda^2\mu\nu=\mu(g)$. To obtain~(b)
  from~\eqref{eq:g}, it suffices to prove $\lambda=\mu(g)$, wich
  amounts to $\lambda\mu\nu=1$. For this, observe that
  \[
  \lambda\mu\nu\,x\star y = \lambda\mu\nu\,gf^{-1}\circ f\circ
  g^{-1}(x\star y)\qquad\text{for all $x$, $y\in S$.}
  \]
  Compute the right side using successively \eqref{eq:g},
  \eqref{eq:g1}, and \eqref{eq:g2}:
  \begin{align*}
    \lambda\mu\nu\,gf^{-1}\circ f\circ g^{-1}(x\star y) & = \mu\nu\,
    gf^{-1}\circ f\bigl(f(x)\star gf^{-1}(y)\bigr)\\
    & = \nu\,gf^{-1}\bigl(g(x)\star f^{-1}(y)\bigr)\\
    & = x\star y.
  \end{align*}
  Thus, $\lambda\mu\nu\,x\star y = x\star y$ for all $x$, $y\in S$,
  hence $\lambda\mu\nu=1$ and it follows that (c)$\Rightarrow$(b).
  
  Now, assume (a), (b), and (c) hold. The equation
  $\lambda_\diamond=\mu(f)\mu(g)\lambda_\star$ easily follows
  from~\eqref{eq:multdiam}. For $\varphi\in\GO^+(n)$
  Theorem~\ref{thm:4} yields
  $\varphi'$, $\varphi''\in\GO^+(n)$ such that
  \[
  \varphi(x\star
  y)=\varphi'(x)\star\varphi''(y)\qquad\text{for all $x$, $y\in S$,}
  \]
  so $\rho_\star[\varphi]=[\varphi']$. Then
  \[
  \varphi(x\diamond y) =
  \varphi\bigl(f(x)\star g(y)\bigr) = \varphi'f(x)\star \varphi''g(y)
  = f^{-1}\varphi'f(x)\diamond g^{-1}\varphi''g(y),
  \]
  hence $\rho_\diamond[\varphi]=[f^{-1}\varphi' f]$. It follows that
  $\rho_\diamond= \Int[f^{-1}]\circ\rho_\star$. From~(c) it is
  easily derived that $[f]^{-1}=[g^{-1}]\cdot\rho_\star[f]$, hence we
  also have $\rho_\diamond = \Int[g^{-1}]\circ\rho_\star\circ\Int[f]$.

  Finally, (i) holds if and only if $\rho_\dia=\Int\bigl([h]^{-1}\cdot
  \rho_\star[h]\bigr)\circ\rho_\star$. This equation is equivalent
  to~(ii) since $\rho_\dia=\Int[f^{-1}]\circ\rho_\star$ and the center
  of $\gPGO^+(n)$ is trivial.
\end{proof}

\begin{thm}
  \label{thm:oneonecor}
  The assignment $\star\mapsto\rho_\star$ defines a one-to-one
  correspondence between symmetric compositions on $(S,n)$ up to
  scalars and trialitarian automorphisms of $\gPGO^+(n)$ defined over $F$.
\end{thm}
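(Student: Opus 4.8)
The plan is to prove surjectivity and injectivity of $\star\mapsto[\rho_\star]$ separately, the word ``one-to-one correspondence'' here meaning that the assignment descends to a bijection between the set of scalar-equivalence classes $\{\nu\cdot\star : \nu\in F^\times\}$ of symmetric compositions and the set of trialitarian automorphisms of $\gPGO^+(n)$ over $F$.

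\emph{Well-definedness.} First I would check that $\rho_\star$ depends only on the scalar class of $\star$. By Lemma~\ref{lem:fond} applied with $f=g=\nu\inv\Id_S$ (a proper isometry after rescaling, or directly: the map $x\dia y=\nu x\star y$ is $\nu\cdot\star$), one sees that $\rho_{\nu\cdot\star}=\rho_\star$; alternatively one checks directly from the diagram $D^+_\star(f,g,h)$ that replacing $\star$ by $\nu\cdot\star$ rescales $\alpha^\star_0$ in a way that cancels in $\Int[g]\times\Int[h]$, so $\rho_{\nu\cdot\star}=\rho_\star$. Thus the map is well defined on scalar classes.

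\emph{Injectivity.} Suppose $\rho_\star=\rho_\dia$ for two symmetric compositions on $(S,n)$. By Proposition~\ref{prop:normalize} we may, after replacing each by a similar normalized composition (which does not change $\rho$, by Lemma~\ref{lem:fond}, since a similarity $\star\to\star'$ has the form above and induces $\rho_{\star'}=\Int[f^{-1}]\circ\rho_\star\circ\cdots$; more simply, similar compositions differ by a scalar together with an isomorphism, and one uses Lemma~\ref{lem:fond} to track $\rho$), assume $\star$ and $\dia$ are both normalized. By Proposition~\ref{prop:rhoalpha}, $\rho_\star=\rho_\dia$ forces $\alpha^\star_0=\alpha^\dia_0$. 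Now recover the composition from $\alpha_0$: the formula in Proposition~\ref{prop:triasimilituderevone} gives $\alpha^\star_0(x\cdot y)=(r^\star_x\ell^\star_y,\ell^\star_x r^\star_y)$ (multipliers are $1$), and specializing appropriately---e.g. evaluating the first component at a vector, or using $\ell^\star_x$ on a suitable element---one extracts $\ell^\star_x$, hence $x\star y$, from $\alpha^\star_0$. Therefore $\star=\dia$, and two arbitrary symmetric compositions with $\rho_\star=\rho_\dia$ are similar, hence scalar multiples of one another after the normalization step is undone. This shows injectivity on scalar classes.

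\emph{Surjectivity.} Let $\rho$ be any trialitarian automorphism of $\gPGO^+(n)$ over $F$. Fix a reference symmetric composition $\star$ on $(S,n)$ (one exists---take a para-octonion composition as in Example~\ref{ex:cayley}). By the split exact sequence~\eqref{eq:exactseq}, $\rho$ and $\rho_\star$ are both outer of order~$3$ mapping to generators of $\Sym_3/\mathfrak A_3$... more precisely to $3$-cycles in $\Sym_3$; replacing $\star$ by $\star^\op$ if necessary (which inverts $\rho_\star$, by Theorem~\ref{thm:defcorresp}), we may assume $\rho$ and $\rho_\star$ have the same image in $\Sym_3$, so $\rho=\Int[f^{-1}]\circ\rho_\star$ for some $[f]\in\PGO^+(n)$; lift to $f\in\GO^+(n)$. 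The condition that $\rho$ has order~$3$ translates, via $\rho^3=\Id$ and the relation $\rho_\star\circ\Int[\varphi]=\Int[\rho_\star\varphi]\circ\rho_\star$, into $\rho_\star^2[f]\cdot\rho_\star[f]\cdot[f]=1$ in $\PGO^+(n)$. This is precisely condition~(c) of Lemma~\ref{lem:fond} (with $g$ determined by $[g]=\rho_\star^2[f]^{-1}$): that lemma then produces a symmetric composition $\dia$ on $(S,n)$, namely $x\dia y=f(x)\star g(y)$, with $\rho_\dia=\Int[f^{-1}]\circ\rho_\star=\rho$. Hence $\rho$ lies in the image, proving surjectivity.

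\emph{Main obstacle.} The delicate point is surjectivity: one must verify that \emph{every} trialitarian automorphism over $F$---not merely those inner-twisted from $\rho_\star$ by a cocycle---actually arises, i.e., that the cohomological obstruction to writing $\rho=\Int[f^{-1}]\circ\rho_\star$ with $[f]\in\PGO^+(n)(F)$ vanishes. This uses that the sequence~\eqref{eq:exactseq} is split and that $\gInt(\gPGO^+(n))=\gPGO^+(n)$ is connected, so that any $F$-automorphism inducing a $3$-cycle on the Dynkin diagram differs from a fixed such automorphism by an element of $\gPGO^+(n)(F)$ (after possibly composing with $\op$ to match the $3$-cycle); the order-$3$ condition is then exactly the triality cocycle relation handled by Lemma~\ref{lem:fond}(c). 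Once that translation is made, Lemma~\ref{lem:fond} does all the work.
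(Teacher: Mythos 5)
Your well-definedness and surjectivity arguments match the paper's proof closely: same reduction via $\star^\op$, same cocycle relation $\rho_\star^2[f]\cdot\rho_\star[f]\cdot[f]=1$, and same invocation of Lemma~\ref{lem:fond} to produce $\dia$ with $\rho_\dia=\rho$. That part is fine.

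The injectivity argument has two genuine gaps. First, the reduction to normalized compositions is not legitimate: by Proposition~\ref{prop:class}, replacing $\star$ by a similar normalized composition $\overline\star$ changes $\rho_\star$ to a \emph{conjugate} $\Int[h]\circ\rho_\star\circ\Int[h]^{-1}$, not to itself. So after normalizing both $\star$ and $\dia$ you would know only that $\rho_{\overline\star}$ and $\rho_{\overline\dia}$ are conjugate, which is not the hypothesis you need; the equality $\rho_\star=\rho_\dia$ is destroyed by the reduction. Second, the step ``recover the composition from $\alpha_0$'' by ``specializing appropriately'' is not actually carried out, and as stated it asserts more than is true: $\alpha_0^\star=\alpha_0^\dia$ together with both compositions being normalized does \emph{not} force $\star=\dia$ (for example $\dia=(-1)\cdot\star$ is normalized and has the same $\alpha_0$), only that they are scalar multiples. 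To deduce the scalar-multiple conclusion from $\alpha_0^\star=\alpha_0^\dia$ requires a real argument. The paper's route is to fix an anisotropic $x$, set $\varphi=(\ell_x^\dia)^{-1}\ell_x^\star$, deduce $z\dia y=\lambda_\dia\lambda_\star^{-1}\varphi(z\star y)$, then show $\varphi$ is proper (using Lemma~\ref{lem:eqimproper}/Lemma~\ref{lem:fond} to rule out the improper case), apply Theorem~\ref{thm:4} to write $\varphi(z\star y)=\mu(\varphi)\varphi'(z)\star\varphi''(y)$, and invoke Lemma~\ref{lem:fond} once more to get $\rho_\dia=\Int[\varphi'^{-1}]\circ\rho_\star$, whence $[\varphi']=1$ and finally $[\varphi]=1$ because $[\varphi']=\rho_\star[\varphi]$. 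None of these steps is present in your sketch; without them the injectivity direction is unproven.
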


\begin{proof}
  We first show that the map is onto.
  Let $\tau$ be a trialitarian automorphism of $\gPGO^+(n)$ over $F$ and let
  $\star$ be a symmetric composition on $(S,n)$, so $\rho_\star$ also
  is a trialitarian automorphism. In view of the exact
  sequence~\eqref{eq:exactseq}, 
  we have either $\tau \equiv\rho_\star$ or $\tau\equiv\rho_\star^{-1}$
  $\bmod\Int\bigl(\PGO^+(n)\bigr)$. Substituting $\star^\op$ for
  $\star$ if necessary, we may assume
  $\tau\equiv\rho_\star\bmod\Int\bigl(\PGO^+(n)\bigr)$, hence there
  exists $f\in\GO^+(n)$ such that
  \[
  \tau=\Int[f^{-1}]\circ\rho_\star.
  \]
  Since $\tau^3=\rho_\star^3=\Id$, we must have
  $[f]^{-1}\cdot\rho_\star[f]^{-1}\cdot\rho_\star^2[f]^{-1}=1$. Let
  $g\in\GO^+(n)$ be such that
  $[g]=\rho_\star^2[f]^{-1}$. Lemma~\ref{lem:fond} then shows that
  $\tau=\rho_\diamond$ for the symmetric composition $\diamond$
  defined by
  \[
  x\diamond y = f(x)\star g(y)\qquad\text{for $x$, $y\in S$.}
  \]

  Now, let $\star$ and $\dia$ be two symmetric compositions on
  $(S,n)$. Suppose $\dia$ is a scalar multiple of $\star$, say there
  exists $\mu\in F^\times$ such that $x\dia y= \mu x\star y$ for all
  $x$, $y\in S$. Then $\lambda_\dia=\mu^2\lambda_\star$, and for all
  $x\in S$ we have $\ell_x^\dia=\mu\ell_x^\star$ and
  $r_x^\dia=\mu r_x^\star$. Therefore, $\alpha_0^\dia=\alpha_0^\star$,
  hence $\rho_\dia=\rho_\star$ by
  Proposition~\ref{prop:rhoalpha}. Thus, symmetric compositions that
  are scalar multiples of each other define the same trialitarian
  automorphism, and it only remains to show the converse: if
  $\star$ and $\diamond$ are symmetric compositions such that
  $\rho_\star=\rho_\diamond$, then $\star$ and $\diamond$ are
  multiples of each other. By Proposition~\ref{prop:rhoalpha}, the hypothesis
  $\rho_\star=\rho_\diamond$ implies $\alpha^\star_0=\alpha^\diamond_0$,
  hence
  \[
  \lambda_\star^{-1}\ell_x^\star r_y^\star = \lambda_\diamond^{-1}
  \ell_x^\diamond r_y^\diamond \qquad\text{for all $x$, $y\in S$.}
  \]
  Fix an anisotropic vector $x\in S$ and let
  $\varphi=(\ell_x^\diamond)^{-1}\ell_x^\star$, so
  \[
  \lambda_\diamond\lambda_\star^{-1} \varphi r_y^\star=r_y^\diamond
  \qquad\text{for all $y\in S$.}
  \]
  This equation means that
  \begin{equation}
    \label{eq:phix}
    z\diamond y=\lambda_\diamond\lambda_\star^{-1} \varphi(z\star y)
    \qquad\text{for all $y$, $z\in S$.}
  \end{equation}
  The map $\varphi$ is a similarity since $\ell_x^\diamond$ and
  $\ell_x^\star$ are similarities. Its multiplier is
  \[
  \mu(\varphi)=\mu(\ell_x^\diamond)^{-1}\mu(\ell_x^\star)=
  \lambda_\diamond^{-1}\lambda_\star.
  \]
  Suppose first $\varphi$ is
  improper. Theorem~\ref{thm:4} then yields improper
  similarities $\varphi'$, $\varphi''$ such that
  \[
  \varphi(z\star y)=\mu(\varphi) \varphi'(y)\star\varphi''(z)
  \qquad\text{for all $y$, $z\in S$},
  \]
  hence
  \[
  z\diamond y=
  \varphi''(z)\star^\op \varphi'(y) \qquad\text{for all $y$, $z\in
    S$.}
  \]
  Lemma~\ref{lem:fond} yields a contradiction since $\varphi'$ and
  $\varphi''$ are improper. Therefore, $\varphi$ is proper and
  Theorem~\ref{thm:4} yields $\varphi'$,
  $\varphi''\in\GO^+(n)$ such that
  \[
  \varphi(z\star y)=\mu(\varphi) \varphi'(z)\star \varphi''(y)
  \qquad\text{for all $y$, $z\in S$,}
  \]
  hence
  \[
  z\diamond y=
  \varphi'(z)\star\varphi''(y) \qquad\text{for all $y$, $z\in S$.}
  \]
  By Lemma~\ref{lem:fond} it follows that
  $\rho_\diamond=\Int[{\varphi'}^{-1}]\circ\rho_\star$. Since by
  hypothesis $\rho_\star=\rho_\diamond$, this equation implies
  $[\varphi']=1$. But $[\varphi']=\rho_\star[\varphi]$, so also
  $[\varphi]=1$. Equation~\eqref{eq:phix} then shows that $\diamond$
  and $\star$ are scalar multiples of each other.
\end{proof}

\section{Classification of conjugacy classes of trialitarian automorphisms}

We next show that under the one-to-one correspondence of
Theorem~\ref{thm:oneonecor} similarity of symmetric
compositions corresponds to conjugacy of trialitarian automorphisms.

\begin{prop}
  \label{prop:class}
  Let $h\colon \dia \to \star$ be a similarity of symmetric
  compositions on $(S,n)$. Then
  \begin{equation} 
  \label{equ:rhoconj1} 
    \rho_\star=
    \Int[h]\circ\rho_\dia\circ\Int[h]^{-1}.
  \end{equation} 
  Conversely, if~\eqref{equ:rhoconj1} holds for some $h\in\GO(n)$,
  then some scalar multiple of $h$ is a similarity
  $\dia\to\star$.
\end{prop}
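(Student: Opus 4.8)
My plan is to prove the two implications separately: the forward one carries the real content, and the converse will then follow formally from it together with the uniqueness half of Theorem~\ref{thm:oneonecor}.

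For the forward direction I would transport the relation that \emph{defines} $\rho_\star$ (condition $(a^+)$ of Lemma~\ref{lem:3}) along the similarity $h$. First note that if $h\colon\dia\to\star$ is a similarity then so is $h^{-1}\colon\star\to\dia$, that is,
\[
h^{-1}(x\star y)=h^{-1}(x)\dia h^{-1}(y)\qquad\text{for all $x$, $y\in S$.}
\]
Now fix $[f]\in\PGO^+(n)$ and use Theorem~\ref{thm:4} to choose $g$, $k\in\GO^+(n)$ and $\lambda\in F^\times$ with $\lambda f(x\star y)=g(x)\star k(y)$ for all $x$, $y$, so that $\rho_\star[f]=[g]$. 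Replacing $x$, $y$ by $h(x)$, $h(y)$ and then composing both sides with $h^{-1}$ turns this identity into
\[
\lambda\,(h^{-1}fh)(x\dia y)=(h^{-1}gh)(x)\dia(h^{-1}kh)(y)\qquad\text{for all $x$, $y\in S$,}
\]
which is exactly condition $(a^+)$ of Lemma~\ref{lem:3} for the composition $\dia$ applied to the triple $(h^{-1}fh,\,h^{-1}gh,\,h^{-1}kh)$. The lemma then guarantees that these three maps are proper similarities, so they lie in $\GO^+(n)$, and the definition of $\rho_\dia$ yields $\rho_\dia[h^{-1}fh]=[h^{-1}gh]$. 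Since $[h^{-1}fh]=\Int[h]^{-1}([f])$ and $[h^{-1}gh]=\Int[h]^{-1}(\rho_\star[f])$ and $[f]$ was arbitrary, this says $\rho_\dia\circ\Int[h]^{-1}=\Int[h]^{-1}\circ\rho_\star$, that is, \eqref{equ:rhoconj1} on rational points; as the whole computation goes through over an arbitrary extension of $F$, it gives the asserted equality of $F$-morphisms $\gPGO^+(n)\to\gPGO^+(n)$.

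For the converse, suppose \eqref{equ:rhoconj1} holds for some $h\in\GO(n)$. The idea is to push $\dia$ forward through $h$: set
\[
x\odot y=h\bigl(h^{-1}(x)\dia h^{-1}(y)\bigr)\qquad\text{for $x$, $y\in S$.}
\]
A routine check, using that $h$ is a similarity of $(S,n)$ and that $\dia$ satisfies (1)--(2) of Definition~\ref{def:symcomp}, shows that $\odot$ is again a symmetric composition on $(S,n)$ (with multiplier $\mu(h)^{-1}\lambda_\dia$) and that, by construction, $h\colon\dia\to\odot$ is a similarity. Applying the forward direction to $h\colon\dia\to\odot$ gives $\rho_\odot=\Int[h]\circ\rho_\dia\circ\Int[h]^{-1}$, which by hypothesis equals $\rho_\star$. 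By Theorem~\ref{thm:oneonecor}, $\odot$ must then be a scalar multiple of $\star$: there is $\mu\in F^\times$ with $x\odot y=\mu\,(x\star y)$ for all $x$, $y$. Hence $h(x\dia y)=h(x)\odot h(y)=\mu\,\bigl(h(x)\star h(y)\bigr)$, and bilinearity of $\star$ shows immediately that $\mu h\colon\dia\to\star$ is a similarity, which is the desired scalar multiple of $h$.

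I expect no genuine obstacle here; the one point to watch in the forward direction is carrying out the substitution $x\mapsto h(x)$, $y\mapsto h(y)$ followed by composition with $h^{-1}$ accurately, and then invoking Lemma~\ref{lem:3} so as to know that $\rho_\dia$ is actually pinned down by the transported identity, not merely compatible with it. It is worth emphasizing that no properness hypothesis on $h$ enters anywhere: if $h$ is improper then $\Int[h]$ is an outer automorphism of $\gPGO^+(n)$, but the transport $\odot$ is still a symmetric composition on $(S,n)$ and the appeal to Theorem~\ref{thm:oneonecor} is unaffected, so the statement holds for all $h\in\GO(n)$.
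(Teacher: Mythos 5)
Your argument is correct and follows essentially the same route as the paper: the forward implication is the same substitution-and-transport of condition $(a^+)$ through $h$, and the converse invokes the uniqueness half of Theorem~\ref{thm:oneonecor} after a twist. The only (inessential) difference is in the converse step, where you push $\dia$ forward through $h$ to get $\odot$ and compare it to $\star$, whereas the paper pulls $\star$ back through $h^{-1}$ to get $\rhd$ and compares it to $\dia$.
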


\begin{proof}
  Suppose first $h\colon\dia\to\star$ is a similarity of symmetric
  compositions, so
  \begin{equation}
    \label{eq:hsim}
    h(x\dia y)=h(x)\star h(y)\qquad\text{for all $x$, $y\in S$.}
  \end{equation}
  Let $\varphi\in\GO^+(n)$. Theorem~\ref{thm:4} yields $\varphi'$,
  $\varphi''\in\GO^+(n)$ such that
  \[
  \varphi(x\star y)=\varphi'(x)\star\varphi''(y)\qquad\text{for all
    $x$, $y\in S$,}
  \]
  and $\rho_\star[\varphi]=[\varphi']$ by definition. Applying
  $\varphi$ to each side of~\eqref{eq:hsim}, we find
  \[
  \varphi h(x\dia y) = \varphi'h(x)\star \varphi''h(y).
  \]
  The right side is
  \[
  hh^{-1}\varphi'h(x)\star hh^{-1}\varphi''h(y)= h\bigl(
  h^{-1}\varphi'h(x)\dia h^{-1}\varphi''h(y)\bigr),
  \]
  hence
  \[
  h^{-1}\varphi h(x\dia y) = h^{-1}\varphi'h(x)\dia
  h^{-1}\varphi''h(y) \qquad\text{for all $x$, $y\in S$.}
  \]
  Therefore, $\rho_\dia\bigl([h^{-1}\varphi h]\bigr) = [h^{-1}]\cdot
  \rho_\star[\varphi]\cdot [h]$, proving~\eqref{equ:rhoconj1}.

  Conversely, assume \eqref{equ:rhoconj1} holds, and set
  \[
  x\rhd y = h^{-1}\bigl(h(x)\star h(y)\bigr) \qquad\text{for $x$,
    $y\in S$,}
  \]
  so $\rhd$ is a symmetric composition similar to $\star$ under
  $h$. The arguments above show that
  \[
  \rho_\rhd=\Int[h^{-1}]\circ\rho_\star\circ\Int[h] = \rho_\dia,
  \]
  hence Theorem~\ref{thm:oneonecor} implies that $\dia$ is a multiple
  of $\rhd$. If $\lambda\in F^\times$ is such that $x\rhd
  y=\lambda\,x\dia y$ for all $x$, $y\in S$, then $\lambda\,h(x\dia
  y)=h(x)\star h(y)$, hence
  \[
  \lambda^{-1}h(x\dia y)= \lambda^{-1}h(x)\star \lambda^{-1}h(y)
  \qquad\text{for all $x$, $y\in S$.}
  \]
  This equation shows that $\lambda^{-1}h\colon\dia\to\star$ is a
  similarity.
\end{proof}

For the following statement, recall from
Proposition~\ref{prop:normalize} that every symmetric composition is
similar to a normalized symmetric composition, which is unique up to
isomorphism.

\begin{thm} \label{thm:bigclassifrev}
Let $\star$ and $\dia$ be symmetric compositions on $(S,n)$, and let
$\overline\star$, $\overline\dia$ be normalized symmetric
compositions similar to $\star$ and to $\dia$ respectively.
The following conditions are equivalent:
\begin{enumerate}
\item[(a)]
$\star$ and $\dia$ are similar;
\item[(b)]
$\overline\star$ and $\overline\dia$ are isomorphic;
\item[(c)]
$\rho_\star$ and $\rho_\dia$ are conjugate in
$\Aut\bigl(\gPGO^+(n)\bigr)$ over $F$;
\item[(d)]
$\rho_{\overline\star}$ and $\rho_{\overline\dia}$ are conjugate in
$\Aut\bigl(\gPGO^+(n)\bigr)$ over $F$.
\end{enumerate}
\end{thm}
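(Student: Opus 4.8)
The plan is to prove the cyclic chain of implications (a)$\Rightarrow$(c)$\Rightarrow$(d)$\Rightarrow$(b)$\Rightarrow$(a), which leverages the machinery already established. The equivalence (a)$\iff$(b) is essentially formal: if $\star$ and $\dia$ are similar, then $\overline\star$ and $\overline\dia$ are normalized compositions similar to the same thing, hence similar to each other, and since they share the multiplier $1$, any similarity between them is an isomorphism (as noted in the \emph{Definitions} following Example~\ref{ex:cayley}); conversely, an isomorphism $\overline\star\to\overline\dia$ chains with the given similarities $\overline\star\to\star$ and $\overline\dia\to\dia$ (and their inverses) to produce a similarity $\star\to\dia$. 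So the real content is linking similarity to conjugacy, and here Proposition~\ref{prop:class} does almost all the work.

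First I would prove (a)$\Rightarrow$(c): if $h\colon\dia\to\star$ is a similarity, then Proposition~\ref{prop:class} gives $\rho_\star=\Int[h]\circ\rho_\dia\circ\Int[h]^{-1}$, so $\rho_\star$ and $\rho_\dia$ are conjugate in $\Aut(\gPGO^+(n))(F)$ via the inner automorphism $\Int[h]$; note $[h]\in\PGO(n)=\PGO(n)(F)$ is defined over $F$, so the conjugation takes place over $F$ as required. Next, (c)$\Rightarrow$(d): since $\overline\star$ is similar to $\star$, the already-established implication (a)$\Rightarrow$(c) applied to this similarity shows $\rho_{\overline\star}$ is conjugate to $\rho_\star$ over $F$; likewise $\rho_{\overline\dia}$ is conjugate to $\rho_\dia$ over $F$; composing with the assumed conjugacy $\rho_\star\sim\rho_\dia$ yields $\rho_{\overline\star}\sim\rho_{\overline\dia}$ over $F$. (Conjugacy is an equivalence relation, so this is just transitivity.)

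Then (d)$\Rightarrow$(b) is where the converse half of Proposition~\ref{prop:class} enters, and this is the step I expect to require the most care. Suppose $\rho_{\overline\star}=\Int[k]\circ\rho_{\overline\dia}\circ\Int[k]^{-1}$ for some $[k]\in\PGO(n)$; lifting to $k\in\GO(n)$, the converse part of Proposition~\ref{prop:class} shows that some scalar multiple $\lambda^{-1}k$ is a similarity $\overline\dia\to\overline\star$. Since $\overline\dia$ and $\overline\star$ are both \emph{normalized} (multiplier $1$), the multiplier relation $\lambda_{\overline\dia}=\lambda_{\overline\star}\,\mu(\lambda^{-1}k)$ from the \emph{Definitions} forces $\mu(\lambda^{-1}k)=1$, so $\lambda^{-1}k\colon\overline\dia\to\overline\star$ is in fact an isomorphism, giving (b). Finally (b)$\Rightarrow$(a): an isomorphism $j\colon\overline\star\to\overline\dia$ composes with the given similarities to yield a similarity $\star\to\overline\star\to\overline\dia\to\dia$ (using that similarities are composable and invertible, since $\GO(n)$ is a group), so $\star$ and $\dia$ are similar. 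The chief subtlety throughout is making sure every conjugating element and every similarity is rational over $F$ — but Proposition~\ref{prop:class} is stated for $h\in\GO(n)=\GO(n)(F)$, so each application stays within $F$-rational automorphisms, and the argument goes through without passing to a field extension.
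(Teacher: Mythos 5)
Your overall structure — chaining (a)$\Rightarrow$(c)$\Rightarrow$(d)$\Rightarrow$(b)$\Rightarrow$(a), with Proposition~\ref{prop:class} carrying the load — is reasonable, and most of the links are sound. However there is a genuine gap in the step (d)$\Rightarrow$(b), which is precisely the hard direction of the theorem.

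You open that step by writing ``Suppose $\rho_{\overline\star}=\Int[k]\circ\rho_{\overline\dia}\circ\Int[k]^{-1}$ for some $[k]\in\PGO(n)$.'' But condition (d) only asserts that $\rho_{\overline\star}$ and $\rho_{\overline\dia}$ are conjugate by \emph{some} $\phi\in\Aut\bigl(\gPGO^+(n)\bigr)(F)$; it does not say that $\phi$ has the form $\Int[k]$ with $k\in\GO(n)$. Recall the exact sequence $1\to\gInt(G)\to\gAut(G)\to\Sym_3\to1$: the conjugating element $\phi$ can a priori project to any element of $\Sym_3$, e.g.\ $\phi$ might be of the form $\Int[g]\circ\rho_{\overline\dia}$, which is not an inner automorphism coming from $\PGO(n)$. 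So the converse half of Proposition~\ref{prop:class} does not apply directly to the raw hypothesis of (d). The paper closes this gap with an explicit reduction: every coset of $\Aut\bigl(\gPGO^+(n)\bigr)$ modulo $\Int\bigl(\gPGO^+(n)\bigr)$ is represented over $F$ by one of $\{\Id,\rho_\dia,\rho_\dia^2,\Int[f],\Int[f]\circ\rho_\dia,\Int[f]\circ\rho_\dia^2\}$ for a fixed improper similitude $f$; since $\phi\circ\rho_\dia\circ\phi^{-1}$ is unchanged if $\phi$ is multiplied on the right by $\rho_\dia^{\pm1}$, one may normalize $\phi$ to lie in the coset of $\Id$ or of $\Int[f]$, and then $\phi=\Int[h]$ for some $h\in\GO(n)$. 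Only after this reduction can Proposition~\ref{prop:class} be invoked. Your proof needs this argument (or an equivalent one) inserted at the start of the (d)$\Rightarrow$(b) step. The remaining steps — (a)$\iff$(b), (a)$\Rightarrow$(c) via the forward half of Proposition~\ref{prop:class}, (c)$\Rightarrow$(d) by transitivity, (b)$\Rightarrow$(a) by composing similarities, and the observation that a similarity between normalized compositions is automatically an isomorphism — are all correct and match the paper's reasoning.
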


\begin{proof}
  It is clear from the definition of $\overline\star$ and
  $\overline\dia$ that (a)$\iff$(b). Since $\star$ and
  $\overline\star$ are similar, Proposition~\ref{prop:class} shows
  that $\rho_\star$ and $\rho_{\overline\star}$ are conjugate in
  $\Aut\bigl(\gPGO^+(n)\bigr)$ over $F$. Similarly, $\rho_\dia$ and
  $\rho_{\overline\dia}$ are conjugate, hence it is clear that
  (c)$\iff$(d). Proposition~\ref{prop:class} also yields
  (a)$\Rightarrow$(c), so it only remains to show (c)$\Rightarrow$(a).

  Suppose $\phi$ is an automorphism of $\gPGO^+(n)$ defined over $F$
  such that
  \[
  \rho_\star=\phi\circ\rho_\dia\circ\phi^{-1}.
  \]
  Let $f\in\GO(n)$ be an improper similitude. The restriction of
  $\Int[f]$ to $\gPGO^+(n)$ is an outer automorphism whose square is
  inner, hence each coset of $\Aut\bigl(\gPGO^+(n)\bigr)$ modulo
  $\Int\bigl(\gPGO^+(n)\bigr)$ is represented  over $F$ by an element from the
  set
  \[
  \{\Id,\;\rho_\dia,\;\rho_\dia^2,\;\Int[f],\;\Int[f]\circ\rho_\dia,\;
  \Int[f]\circ\rho_\dia^2\}.
  \]
  Since $\phi\circ\rho_\dia\circ\phi^{-1}$ does not change when $\phi$
  is multiplied on the right by $\rho_\dia$ or $\rho_\dia^{-1}$, we
  may assume the coset of $\phi$ is represented by $\Id$ or by
  $\Int[f]$, hence $\phi=\Int[h]$ for some $h\in\GO(n)$. It follows
  from Proposition~\ref{prop:class} that $\star$ and $\dia$ are
  similar. 
\end{proof}

Theorem~\ref{thm:bigclassifrev} shows that the correspondence
$\star\mapsto\rho_\star$ induces a one-to-one correspondence between
similarity classes of symmetric compositions, or isomorphism classes
of normalized symmetric compositions, and conjugacy classes of
trialitarian automorphisms over $F$ in $\Aut\bigl(\gPGO^+(n)\bigr)$.

\begin{cor}
  \label{cor:bigclassifrev}
  \begin{enumerate}
  \item Two symmetric compositions $\star$ and $\dia$ on $(S,n)$ are
    similar or antisimilar if and only if the subgroups generated by
    $\rho_\star$ and $\rho_\dia$ in $\Aut\bigl(\gPGO^+(n)\bigr)$ are
    conjugate.
  \item Two normalized symmetric compositions $\star$ and $\dia$ on
    $(S,n)$ are isomorphic or anti-isomorphic if and only if the
    subgroups generated by $\rho_\star$ and $\rho_\dia$ in
    $\Aut\bigl(\gPGO^+(n)\bigr)$ are conjugate.
  \end{enumerate}
\end{cor}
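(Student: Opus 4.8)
The plan is to deduce Corollary~\ref{cor:bigclassifrev} directly from Theorem~\ref{thm:bigclassifrev} together with the identity $\rho_\star^{-1}=\rho_{\star^{\op}}$ from Theorem~\ref{thm:defcorresp}. The key observation is that the subgroup of $\Aut\bigl(\gPGO^+(n)\bigr)$ generated by a trialitarian automorphism $\rho_\star$ is the three-element set $\{\Id,\,\rho_\star,\,\rho_\star^2\}=\{\Id,\,\rho_\star,\,\rho_{\star^{\op}}\}$, since $\rho_\star^2=\rho_\star^{-1}=\rho_{\star^{\op}}$. Thus the subgroup generated by $\rho_\star$ depends only on the unordered pair $\{\rho_\star,\rho_{\star^{\op}}\}$, and two such subgroups $\langle\rho_\star\rangle$ and $\langle\rho_\dia\rangle$ are conjugate in $\Aut\bigl(\gPGO^+(n)\bigr)$ if and only if conjugation carries the pair $\{\rho_\star,\rho_{\star^{\op}}\}$ to the pair $\{\rho_\dia,\rho_{\dia^{\op}}\}$, i.e.\ if and only if $\rho_\star$ is conjugate to $\rho_\dia$ or to $\rho_{\dia^{\op}}$.

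For part~(1), I would argue as follows. Suppose $\langle\rho_\star\rangle$ and $\langle\rho_\dia\rangle$ are conjugate by some $\phi\in\Aut\bigl(\gPGO^+(n)\bigr)(F)$. A group isomorphism of a cyclic group of order $3$ either fixes a chosen generator or inverts it, so $\phi\rho_\star\phi^{-1}$ equals $\rho_\dia$ or $\rho_\dia^{-1}=\rho_{\dia^{\op}}$. In the first case Theorem~\ref{thm:bigclassifrev} (equivalence of (c) and (a)) gives that $\star$ and $\dia$ are similar; in the second case it gives that $\star$ and $\dia^{\op}$ are similar, i.e.\ $\star$ and $\dia$ are antisimilar. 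Conversely, if $\star$ and $\dia$ are similar, then by Theorem~\ref{thm:bigclassifrev} $\rho_\star$ and $\rho_\dia$ are conjugate, so $\langle\rho_\star\rangle$ and $\langle\rho_\dia\rangle$ are conjugate; and if $\star$ and $\dia$ are antisimilar, then $\star^{\op}$ and $\dia$ are similar, so $\rho_{\star^{\op}}=\rho_\star^2$ is conjugate to $\rho_\dia$, whence again $\langle\rho_\star\rangle=\langle\rho_\star^2\rangle$ is conjugate to $\langle\rho_\dia\rangle$. Here I use Theorem~\ref{thm:defcorresp} to identify $\rho_{\star^{\op}}$ with $\rho_\star^{-1}$ and hence with $\rho_\star^2$, and the elementary fact that a cyclic group of order $3$ is generated by either of its nonidentity elements.

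Part~(2) follows from part~(1) by invoking Proposition~\ref{prop:normalize} and the bijective correspondence already established: for normalized symmetric compositions, being similar is the same as being isomorphic (similarities between compositions with equal multipliers are isomorphisms), and the opposite of a normalized composition is again normalized with the same multiplier, so antisimilar becomes anti-isomorphic. Thus substituting ``isomorphic'' for ``similar'' and ``anti-isomorphic'' for ``antisimilar'' in the argument of part~(1) yields part~(2) verbatim.

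I do not expect any serious obstacle here; the statement is essentially a formal consequence of Theorem~\ref{thm:bigclassifrev} and the relation $\rho_\star^{-1}=\rho_{\star^{\op}}$. The only point requiring a word of care is the translation between ``the subgroups $\langle\rho_\star\rangle$, $\langle\rho_\dia\rangle$ are conjugate'' and ``$\rho_\star$ is conjugate to $\rho_\dia$ or to $\rho_\dia^{-1}$''; this rests on the trivial group-theoretic fact that an automorphism of $\mathbf{Z}/3\mathbf{Z}$ sends a generator to a generator and that the two generators of $\langle\rho_\dia\rangle$ are precisely $\rho_\dia$ and $\rho_{\dia^{\op}}$.
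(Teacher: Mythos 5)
Your proof is correct and takes essentially the same route as the paper: the paper's two-sentence argument is precisely that $\langle\rho_\star\rangle$ and $\langle\rho_\dia\rangle$ are conjugate iff $\rho_\star$ is conjugate to $\rho_\dia$ or to $\rho_\dia^{-1}=\rho_{\dia^\op}$, and then cites Theorem~\ref{thm:bigclassifrev}. You have simply spelled out the group-theoretic reduction (order-3 cyclic groups, generator-to-generator) and the passage from (1) to (2) via the fact that similarities between normalized compositions are isomorphisms and that opposites of normalized compositions are normalized, all of which the paper leaves implicit.
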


\begin{proof}
  The subgroups generated by $\rho_\star$ and $\rho_\dia$ are
  conjugate if and only if $\rho_\star$ is conjugate to $\rho_\dia$ or
  to $\rho_\dia^{-1}$. Since $\rho_\dia^{-1}=\rho_{\dia^\op}$ by
  Theorem~\ref{thm:defcorresp}, the corollary follows from
  Theorem~\ref{thm:bigclassifrev}. 
\end{proof}

To complete this section, we discuss fixed points of trialitarian
automorphisms, which will be used to classify trialitarian
automorphisms in Section~\ref{section:classificationsymcomp}. For any
symmetric composition $\star$ on $(S,n)$, we let $\gAut(\star)$ be the
$F$-algebraic group of automorphisms of $\star$, whose group of
$F$-rational points $\Aut(\star)$ consists of the isomorphisms
$f\colon\star\to\star$. Corollary~\ref{cor:autoproper} shows that
$\gAut(\star)\subset \gGO^+(n)$.  

\begin{thm}
\label{thm:fixedpoints}
Let $\star$ be a symmetric composition on $(S,n)$. The canonical map
$\phi\colon\gGO^+(n)\to\gPGO^+(n)$ induces an isomorphism from
$\gAut(\star)$ to the subgroup $\gPGO^+(n)^{\rho_\star}$ of
$\gPGO^+(n)$ fixed under the trialitarian automorphism $\rho_\star$.
\end{thm}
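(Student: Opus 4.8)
The plan is to analyze what it means for $[\varphi]\in\PGO^+(n)$ to be fixed by $\rho_\star$ in terms of a lift $\varphi\in\GO^+(n)$, and to show this forces $\varphi$ to be (a scalar multiple of) an automorphism of $\star$. First I would fix $\varphi\in\GO^+(n)$ and use Theorem~\ref{thm:4}(1) to produce $\varphi'$, $\varphi''\in\GO^+(n)$, unique up to scalars, with $\varphi(x\star y)=\varphi'(x)\star\varphi''(y)$ for all $x$, $y\in S$; by definition $\rho_\star[\varphi]=[\varphi']$ and $\rho_\star^2[\varphi]=[\varphi'']$. Thus $[\varphi]\in\gPGO^+(n)^{\rho_\star}$ exactly when $[\varphi']=[\varphi'']=[\varphi]$, i.e. $\varphi'=c\varphi$ and $\varphi''=c'\varphi$ for some $c$, $c'\in F^\times$. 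Substituting into the identity gives $\varphi(x\star y)=cc'\,\varphi(x)\star\varphi(y)$, so after rescaling $\varphi$ by a square root adjustment we can ask whether $cc'$ can be normalized to $1$.

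To pin down $cc'$ I would compare multipliers: applying $\mu$ to both sides of $\varphi(x\star y)=cc'\,\varphi(x)\star\varphi(y)$ and using condition~(1) for $\star$ together with $\mu(\varphi)$ on each side yields $\mu(\varphi)\lambda_\star = (cc')^2\lambda_\star\mu(\varphi)^2$, hence $(cc')^2=\mu(\varphi)^{-1}$. On the other hand, the scalar relations at the end of Lemma~\ref{lem:3} (with $f=g=h=\varphi$ up to scalars, $\lambda=c$, $\mu=c'$, and the remaining entry forced by $\rho_\star^3=\Id$) give $\mu(h)=\lambda\mu$ type relations among $c$, $c'$ and $\mu(\varphi)$ that, combined with the multiplier relation above, force $cc'=1$ (the two possible signs being absorbed by the freedom to replace $\varphi$ by $c^{-1}\varphi$, which replaces $(c,c')$ by $(1,cc')$). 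Therefore some scalar multiple $\varphi_0=c^{-1}\varphi$ satisfies $\varphi_0(x\star y)=\varphi_0(x)\star\varphi_0(y)$, i.e. $\varphi_0\in\Aut(\star)$, and $[\varphi_0]=[\varphi]$. This shows $\phi$ maps $\gAut(\star)$ onto $\gPGO^+(n)^{\rho_\star}$ at the level of rational points, and by the functoriality already used to define $\rho_\star$ it does so as a morphism of algebraic groups over any base, giving surjectivity of $\gAut(\star)\to\gPGO^+(n)^{\rho_\star}$.

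For injectivity of $\phi$ restricted to $\gAut(\star)$, I would observe that the kernel of $\gGO^+(n)\to\gPGO^+(n)$ is the central $\gGm$ of homotheties, so an element of $\gAut(\star)$ in the kernel is a homothety $x\mapsto cx$; imposing $c(x\star y)=(cx)\star(cy)=c^2(x\star y)$ forces $c=1$. Hence $\phi|_{\gAut(\star)}$ is a monomorphism with image exactly $\gPGO^+(n)^{\rho_\star}$, and since both are closed subgroup schemes of $\gPGO^+(n)$ it is an isomorphism onto $\gPGO^+(n)^{\rho_\star}$. I expect the main obstacle to be the bookkeeping in the second step: showing that the two scalars $c$, $c'$ arising from $\rho_\star[\varphi]=[\varphi]$ and $\rho_\star^2[\varphi]=[\varphi]$ can simultaneously be normalized to $1$, rather than just their product. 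The key is that the three instances of the defining relation (for $\varphi$, $\varphi'$, $\varphi''$) are cyclically linked through $\rho_\star^3=\Id$ via the Lemma~\ref{lem:3} scalar identities, which over-determines the system enough to kill the ambiguity once one uses that $\mu(\varphi)$ is the common multiplier.
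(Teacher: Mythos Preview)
Your approach is the paper's in outline: show that the restriction of $\phi$ to $\gAut(\star)$ has trivial kernel and hits exactly the $\rho_\star$-fixed subgroup. The kernel computation is fine. But the surjectivity step has two problems.

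First, the rescaling is both over-complicated and misstated. Once you have
\[
\varphi(x\star y)=c\varphi(x)\star c'\varphi(y)=cc'\,\varphi(x)\star\varphi(y),
\]
simply set $\varphi_0=cc'\,\varphi$ (not $c^{-1}\varphi$). Bilinearity of $\star$ gives
\[
\varphi_0(x\star y)=(cc')^2\,\varphi(x)\star\varphi(y)=\varphi_0(x)\star\varphi_0(y),
\]
so $\varphi_0\in\Aut(\star)$ immediately---no multiplier identities, no Lemma~\ref{lem:3}. Your claim that replacing $\varphi$ by $c^{-1}\varphi$ sends $(c,c')$ to $(1,cc')$ is incorrect (the new pair is $(1,c'c^{-1})$ up to the scalar ambiguity in $\varphi'$, $\varphi''$), and the attempt to force $cc'=1$ from $(cc')^2=\mu(\varphi)^{-1}$ cannot succeed since $\mu(\varphi)$ need not be $1$ before rescaling. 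The paper does exactly the one-line version: it writes the fixed-point condition as $f_0(x\star y)=\alpha f_0(x)\star\beta f_0(y)$ and takes $f=\alpha\beta f_0$.

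Second, and more structurally, the statement is about algebraic group schemes, and surjectivity on $F$-rational points does not by itself give an isomorphism $\gAut(\star)\to\gPGO^+(n)^{\rho_\star}$. The map $\gGO^+(n)\to\gPGO^+(n)$ is not surjective on $A$-points for a general $F$-algebra $A$; a lift exists only after a faithfully flat extension $A\to B$. The paper therefore shows that $\psi$ is a quotient map in the sense of \cite[Theorem~15.5]{waterhouse:79}: given $g\in\gPGO^+(n)^{\rho_\star}(A)$, lift to $f_0\in\gGO^+(n)(B)$ over some faithfully flat $B$, then rescale as above to land in $\gAut(\star)(B)$. Trivial kernel together with the quotient-map property then yields the isomorphism via \cite[Corollary~15.4]{waterhouse:79}. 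Your sentence ``by the functoriality already used to define $\rho_\star$ it does so as a morphism of algebraic groups over any base'' elides this, and the closing remark about ``both being closed subgroup schemes of $\gPGO^+(n)$'' is off target since $\gAut(\star)$ lives in $\gGO^+(n)$, not in $\gPGO^+(n)$.
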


\begin{proof}
If $f\in\Aut(\star)$, then $\rho_\star[f]=[f]$, hence $\phi$ gives
rise to a canonical map
\[
\psi\colon\gAut(\star)\to\gPGO^+(n)^{\rho_\star}.
\]
To prove that $\psi$ is an isomorphism it suffices to show
that $\Ker \psi=1$ and that $\psi$ is a quotient map 
(see \cite[Corollary~15.4]{waterhouse:79}).
The kernel of $\psi$ represents the functor which takes a commutative
$F$-algebra $A$ to $\Ker[\gAut(\star)(A)\to
\gPGO^+(n)^{\rho_\star}(A)]$. Let $a$ be in this kernel. Note that 
$\Ker\psi\subset \Ker\,\phi=\gGm$. Hence $a$ is a homothety $r\Id_{S_A}$ for
some $r\in A^{\times}$. But such a map is in 
$\gAut(\star)(A)$ if and only if $r=1$. Thus $\Ker\,\phi$ represents
the trivial functor and so $\Ker\,\phi=1$.
To see that $\psi$ is a quotient map we need to show that for every commutative
$F$-algebra $A$ and every $g\in \gPGO^+(n)^{\rho_\star}(A)$ there exists a
faithfully flat extension $A\to B$ and an element $f\in \gAut(\star)(B)$ such that
$\psi(f)=g$ (see~\cite[Theorem~15.5]{waterhouse:79}). Since $\phi$ is
a quotient map we can find an $F$-algebra $B$ and an element $f_0\in
\gGO^+(n)(B)$ such that $\phi(f_0)=g$. 
Since $g$ is fixed by $\rho_\star$ there are scalars $\alpha$,
$\beta\in B^\times$ such that
\[
f_0(x\star y) = \alpha f_0(x)\star \beta f_0(y)\qquad\text{for all $x$,
  $y\in S_B$.}
\]
Let $f=\alpha\beta f_0$.
Then $f$ is an automorphism of $\star$ and
$\psi(f)=\phi(f_0)=g$.
\end{proof}

\section{The twist of a symmetric composition}
\label{sec:twisting}

As in the preceding sections, we fix a $3$-fold Pfister quadratic
space $(S,n)$ over $F$. If
$\star$ and $\dia$ are symmetric compositions on $(S,n)$, then the
trialitarian automorphisms $\rho_\star$ and $\rho_\dia$ are related by
inner automorphisms of $\gPGO^+(n)$: in view of the exact sequence
\eqref{eq:exactseq}, we have either
\[
\rho_\star\equiv\rho_\dia \bmod\Int\bigl(\gPGO^+(n)\bigr)
\quad\text{or}\quad \rho_\star\equiv\rho_\dia^{-1}
\bmod\Int\bigl(\gPGO^+(n)\bigr).
\]
We use this observation to relate the symmetric compositions $\star$
and $\dia$.

\begin{proposition}
  \label{prop:oneonecor}
  Let $\star$ and $\diamond$ be two symmetric compositions on
  $(S,n)$. If $\rho_\star\equiv\rho_\diamond\bmod
  \Int\bigl(\gPGO^+(n)\bigr)$, then there exist $f$,
  $g\in\GO^+(n)$ such that 
  \begin{equation}
  \label{eq:diastar}
  x\diamond y=f(x)\star g(y)\qquad\text{for all $x$, $y\in S$.}
  \end{equation}
  If
  $\rho_\star\equiv\rho_\diamond^{-1}\bmod\Int\bigl(\gPGO^+(n)\bigr)$,
  then there exist $f$, $g\in \GO^+(n)$ such that
  \[
  x\diamond y = g(y)\star f(x)\qquad\text{for all $x$, $y\in S$.}
  \]
\end{proposition}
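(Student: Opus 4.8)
The plan is to extract the inner automorphism relating $\rho_\star$ and $\rho_\diamond$, lift it to $\GO^+(n)$, and then feed it into Lemma~\ref{lem:fond}. First I would handle the case $\rho_\star \equiv \rho_\diamond \bmod \Int(\gPGO^+(n))$: choose $\varphi \in \GO^+(n)$ with $\rho_\diamond = \Int[\varphi^{-1}] \circ \rho_\star$. The crucial constraint is that $\rho_\diamond^3 = \Id$, which forces $[\varphi]^{-1} \cdot \rho_\star[\varphi]^{-1} \cdot \rho_\star^2[\varphi]^{-1} = 1$, equivalently $\rho_\star^2[\varphi] \cdot \rho_\star[\varphi] \cdot [\varphi] = 1$. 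This is exactly the cocycle-type condition appearing in part~(c) of Lemma~\ref{lem:fond}. So I would set $f = \varphi$ and pick $g \in \GO^+(n)$ with $[g] = \rho_\star^2[\varphi]^{-1}$; then condition~(c) of Lemma~\ref{lem:fond} holds, so by that lemma the map $x \diamond' y = f(x) \star g(y)$ is a symmetric composition on $(S,n)$ with $\rho_{\diamond'} = \Int[f^{-1}] \circ \rho_\star = \rho_\diamond$. By Theorem~\ref{thm:oneonecor}, $\diamond$ and $\diamond'$ are scalar multiples of each other, say $x \diamond y = \mu\, x \diamond' y$; absorbing the scalar $\mu$ into $f$ (replacing $f$ by $\mu f$, which stays in $\GO^+(n)$) gives \eqref{eq:diastar}.

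For the second case, $\rho_\star \equiv \rho_\diamond^{-1} \bmod \Int(\gPGO^+(n))$, I would reduce to the first case using Theorem~\ref{thm:defcorresp}, which gives $\rho_\diamond^{-1} = \rho_{\diamond^{\op}}$. Thus $\rho_\star \equiv \rho_{\diamond^{\op}} \bmod \Int(\gPGO^+(n))$, and the first case produces $f$, $g \in \GO^+(n)$ with $x \diamond^{\op} y = f(x) \star g(y)$ for all $x$, $y \in S$. Unwinding the definition of the opposite composition, $x \diamond^{\op} y = y \diamond x$, so $y \diamond x = f(x) \star g(y)$; renaming the variables gives $x \diamond y = f(y) \star g(x)$, which is of the required form after relabeling $f$ and $g$.

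The main obstacle is purely bookkeeping: making sure the hypotheses of Lemma~\ref{lem:fond} are matched precisely, in particular that the element $\varphi$ can genuinely be taken in $\GO^+(n)$ (not merely $\GO(n)$) — this is automatic because $\rho_\star$ and $\rho_\diamond$ are both honest trialitarian automorphisms of $\gPGO^+(n)$, so the inner automorphism relating them is inner in $\gPGO^+(n)$ itself and lifts to a \emph{proper} similarity — and that the scalar adjustment at the end stays within $\GO^+(n)$, which it does since scalars are central hence proper. No step here is genuinely hard; the whole proposition is an assembly of Lemma~\ref{lem:fond}, Theorem~\ref{thm:oneonecor}, and Theorem~\ref{thm:defcorresp}.
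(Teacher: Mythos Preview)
Your proposal is correct and follows essentially the same approach as the paper: lift the inner automorphism to some $f\in\GO^+(n)$, use $\rho_\diamond^3=\Id$ to obtain the cocycle relation, invoke Lemma~\ref{lem:fond} and Theorem~\ref{thm:oneonecor}, and absorb the resulting scalar. The only cosmetic differences are that the paper absorbs the scalar into $g$ rather than $f$, and for the second case it passes to $\star^{\op}$ rather than $\diamond^{\op}$; both variants are equivalent.
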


\begin{proof}
  Suppose first $\rho_\dia=\Int[f^{-1}]\circ\rho_\star$ for some
  $f\in\GO^+(n)$. Since $\rho_\dia^3=\Id$, we have
  $\rho_\star^2[f]\cdot\rho_\star[f]\cdot[f]=1$. Let $g\in\GO^+(n)$ be
  such that $[g]=\rho^2_\star[f^{-1}]$, and define
  \[
  x\rhd y=f(x)\star g(y)\qquad\text{for $x$, $y\in S$.}
  \]
  By Lemma~\ref{lem:fond}, $\rhd$ is a symmetric composition on
  $(S,n)$, and
  \[
  \rho_\rhd=\Int[f^{-1}]\circ\rho_\star=\rho_\dia.
  \]
  Therefore, Theorem~\ref{thm:oneonecor} shows that $\rhd$ is a scalar
  multiple of $\dia$. Scaling $g$, we may assume~\eqref{eq:diastar}
  holds.

  If $\rho_\star\equiv\rho_\dia^{-1}\bmod \Int\bigl(\gPGO^+(n)\bigr)$,
  then $\rho_{\star^\op}\equiv\rho_\dia \bmod
  \Int\bigl(\gPGO^+(n)\bigr)$ since $\rho_{\star^\op}=\rho_\star^{-1}$
  by Theorem~\ref{thm:defcorresp}. The first part of the proof yields
  $f$, $g\in\GO^+(n)$ such that
  \[
  x\dia y = f(x)\star^\op g(y) = g(y)\star f(x)\qquad\text{for $x$,
    $y\in S$.}
  \]
\end{proof}

When the symmetric composition $\dia$ is given by \eqref{eq:diastar},
then we must have $[g]=\rho_\star^2[f^{-1}]$, by
Lemma~\ref{lem:fond}. The map $g$ is therefore uniquely determined by
$f$ up to a scalar factor, and we say $\dia$ is a \emph{twist} of
$\star$ through the similarity $f$. Thus, by
Proposition~\ref{prop:oneonecor}, given a symmetric composition $\star$
on $(S,n)$, every symmetric composition on $(S,n)$ is a twist of
$\star$ or $\star^\op$.

Twisting has its  origin in Petersson~\cite{petersson:69},
where the following special case is considered: suppose
$f\colon\star\to\star$ is a similarity (i.e., $f$ is an automorphism
of $\star$), and $f^3=\Id_S$. Since $f$ is an automorphism, we have
$f\in\GO^+(n)$ and $\rho_\star[f]=[f]$: see
Theorem~\ref{thm:fixedpoints}. Since $f^3=\Id_S$, we have
$\rho^2_\star[f]\cdot\rho_\star[f]\cdot[f]=1$, hence we may choose
$g=f^{-1}$ in the discussion above. By Lemma~\ref{lem:fond}, the
product
\begin{equation}
  \label{eq:Petwist}
  x\star_f y = f(x)\star f^{-1}(y) \qquad\text{for $x$, $y\in S$}
\end{equation}
defines a symmetric composition.

The idea of twisting was further developed in \cite{elduque:2000a}. 

\begin{cor}
  \label{cor:improper}
  For every symmetric composition $\diamond$ on $(S,n)$ and every
  anisotropic vector $x\in S$, the similarities $\ell^\diamond_x$ and
  $r^\diamond_x$ are improper.
\end{cor}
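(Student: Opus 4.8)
\textbf{Proof proposal for Corollary~\ref{cor:improper}.}

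The plan is to reduce the general statement to the single ``known'' example of the para-octonion composition (Example~\ref{ex:cayley}), using the twisting machinery just established. First I would invoke Proposition~\ref{prop:oneonecor} (together with the remark following it) to write the given $\diamond$ as a twist of a fixed reference composition or its opposite: there exists $f\in\GO^+(n)$ and $g\in\GO^+(n)$ with $[g]=\rho_\bullet^2[f^{-1}]$ such that either $x\diamond y=f(x)\bullet g(y)$ or $x\diamond y=g(y)\bullet f(x)$, where $\bullet$ is a para-octonion composition on $(S,n)$. (We are free to take $\bullet$ on the same quadratic space, since after scaling $n$ to represent~$1$ it is a $3$-fold Pfister form, hence the norm of an octonion algebra.) In either case, for anisotropic $x$ the operator $\ell^\diamond_x$ is the composite of $\ell^\bullet_{f(x)}$ or $r^\bullet_{f(x)}$ with $g$ (possibly on one side), and similarly for $r^\diamond_x$. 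Since $f$ and $g$ are \emph{proper} similarities, Lemma~\ref{lem:eqimproper} (more precisely, the fact that composing with a proper similarity does not change the proper/improper type) shows that $\ell^\diamond_x$ and $r^\diamond_x$ are improper if and only if the corresponding operators $\ell^\bullet_z$ or $r^\bullet_z$ are improper for the anisotropic vector $z=f(x)$.

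So the whole statement comes down to checking that for the para-octonion composition $x\bullet y=\overline x\cdot\overline y$ on an octonion algebra $(\Cay,n)$, the maps $\ell^\bullet_x$ and $r^\bullet_x$ are improper for one (hence, by Lemma~\ref{lem:eqimproper}, every) anisotropic $x$. The cleanest choice is $x=1$: then $\ell^\bullet_1(y)=\overline 1\cdot\overline y=\overline y$ and $r^\bullet_1(y)=\overline y$, so both operators equal the conjugation map $y\mapsto\overline y$ of $\Cay$. Conjugation is an isometry of $n$ with multiplier~$1$; it remains to see it is improper, i.e.\ that the induced automorphism $C_0[\;\overline{\phantom{x}}\;]$ of the even Clifford algebra acts nontrivially on the center of $C_0(\Cay,n)$. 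This is a standard computation: conjugation sends a vector $v$ to $-v$ on the trace-zero hyperplane $1^\perp$ and fixes $1$, so it is the composite of the reflection along $1$ with $-\Id$; a reflection is an improper isometry (determinant $-1$, nontrivial on the center of the even Clifford algebra), and $-\Id_{\Cay}$ is proper since $\dim\Cay=8$ is even (it is a product of four commuting reflections, or directly: $C_0(-\Id)$ is the identity). Hence conjugation is improper, which is exactly condition~(a) of Lemma~\ref{lem:eqimproper} for $\bullet$.

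The main obstacle is the last point — verifying that conjugation on octonions is an improper similarity — since it is the only place where one must actually touch the Clifford-algebra structure rather than formally propagate properness through the twisting correspondence. I would handle it either by the reflection-decomposition argument above (conjugation $=$ reflection in the hyperplane $1^\perp$ composed with $-\Id$, the reflection being improper and $-\Id$ proper in even dimension), or, even more cheaply, by a dimension/continuity remark: the identity is proper and conjugation has determinant $(-1)^{7}=-1\neq(-1)^{8}=\det\Id$ on an $8$-dimensional space, so it cannot be proper because proper and improper similarities lie in different connected components of $\gGO(n)$ and the determinant distinguishes $\gGO^+(n)\subset\SL_{\pm}$ appropriately in even dimension. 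Once this one example is in hand, the corollary follows formally, and in particular it justifies the assertion made before Lemma~\ref{lem:eqimproper} that $\ell^\star_x$ and $r^\star_x$ are always improper.
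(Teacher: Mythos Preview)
Your proof is correct and follows essentially the same route as the paper: reduce to the para-octonion case via Proposition~\ref{prop:oneonecor}, observe that $\ell^\bullet_1=r^\bullet_1$ is the octonion conjugation and hence improper, then propagate via Lemma~\ref{lem:eqimproper} and the fact that $f,g\in\GO^+(n)$. The only caveat is that your determinant computation is specific to characteristic $\neq2$; the reflection decomposition $\overline{\phantom{x}}=(-\Id)\circ\tau_1$ (which you also give) works uniformly, since in characteristic~$2$ it simply reduces to the reflection $\tau_1$ itself.
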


\begin{proof}
  Let $\star$ be a para-octonion composition on $(S,n)$ (see
  Example~\ref{ex:cayley}). 
  The similarities
  $\ell_1^\star$ and $r_1^\star$, where $1$ is the identity of the octonion algebra,
   coincide with the conjugation map, which is an
  improper isometry. Therefore, by Lemma~\ref{lem:eqimproper}, the
  similarities $\ell_x^\star$ and $r_x^\star$ are improper for every
  anisotropic vector $x\in S$. Propositiom~\ref{prop:oneonecor} shows
  that there exist $f$, $g\in\GO^+(n)$ such that either
  \[
  x\diamond y = f(x)\star g(y)\quad\text{for all $x$, $y\in S$}
  \quad\text{or}\quad
  x\diamond y = g(y)\star f(x)\quad\text{for all $x$, $y\in S$.}
  \]
  Therefore, for every anisotropic vector $x\in S$ we have either
  $\ell_x^\diamond=\ell_{f(x)}^\star \circ g$ and
  $r_x^\diamond= r_{g(x)}^\star\circ f$, or
  $\ell_x^\dia=r_{f(x)}^\star\circ g$ and
  $r_x^\dia=\ell_{g(x)}^\star\circ f$. The similarities
  $\ell_x^\diamond$ and 
  $r_x^\diamond$ are therefore improper.
\end{proof}
   
\section{Spin groups and symmetric compositions}

Let $\star$ be a normalized symmetric composition on a $3$-fold
Pfister quadratic space $(S,n)$. We briefly point out in this section
how the results of the preceding sections can be modified to apply to
the group $\gSpin(n)$ instead of $\gPGO^+(n)$.

Recall that the $F$-rational points of the $F$-algebraic group
$\gSpin(n)$ are given by
\[
\Spin(n) = \{ c \in C_0(S,n)^\times \ | \ cSc\inv \subset S \quad
\text{and} \quad c\sigma(c) =1\} 
\]
(see for example \cite[35.C.]{KMRT}). The isomorphism $\alpha^\star_0$
introduced in Proposition~\ref{prop:triasimilituderevone} can be used  
to give a convenient description of $\Spin(n)$: for $c\in\Spin(n)$,
define $f$, $f_1$, $f_2\in\End_FS$ as follows:
\[
f(x)=cxc\inv\text{ for $x\in S$,}\qquad \alpha_0^\star(c)=
(f_1,f_2).
\]
The map $f$ is a proper isometry. Similarly, since $c\sigma(c)=1$ and
$\alpha_0^\star$ is an isomorphism of algebras with involution, $f_1$
and $f_2$ are isometries. We have $C_0[f]=\Int(c)\rvert_{C_0(S,n)}$, hence the
diagram $D_\star^+(f,f_1,f_2)$ commutes. Therefore, $f_1$ and $f_2$
are proper isometries. Let $\Orth^+(n)$ be the group of proper
isometries of $(S,n)$. The following result is shown
in~\cite[(35.7)]{KMRT} (assuming $\charac F\neq2$):

\begin{prop}
The map $c\mapsto (f,f_1,f_2)_\star$ defines an isomorphism
\[
\Spin(n)  \isom   \{ ({f},f_1,f_2)_\star \ | \ f_i \in
\Orth^+(n)(F),\, {f}(x \star y) =  f_1(x)\star f_2(y),\ x,y \in S\}. 
\]
Moreover any of the three relations
\[
\begin{array}{lcl}
{f}(x \star y) & =  & f_1(x)\star f_2(y)\\
f_1(x \star y) & =  & {f}_2(x)\star f(y)\\
f_2(x \star y) & =  & f(x)\star {f}_1(y)
\end{array}
\]
implies the two others. 
\end{prop}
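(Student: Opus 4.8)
The plan is to show that the map $c \mapsto (f, f_1, f_2)_\star$ is a well-defined group homomorphism into the indicated set, exhibit an inverse, and then verify the cyclic-symmetry statement as a consequence of Lemma~\ref{lem:3}. The main steps are as follows.

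First I would check that the map is well-defined and lands in the target set. Given $c \in \Spin(n)$, the element $f \colon x \mapsto cxc^{-1}$ lies in $\Orth^+(n)(F)$ because $c \in C_0(S,n)^\times$ satisfies $cSc^{-1} \subset S$ (so $f$ is a linear endomorphism of $S$), the relation $C_0[f] = \Int(c)|_{C_0(S,n)}$ shows $f$ is proper, and $c\sigma(c)=1$ together with $\sigma(x)=x$ for $x\in S$ forces $n(f(x)) = f(x)\sigma(f(x)) = cx\sigma(c^{-1})\sigma(x)\sigma(c) \cdot(\ldots)$, which after using $\sigma(c^{-1}) = \sigma(c)^{-1} = c$ reduces to $n(x)$; hence $f$ is an isometry. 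Writing $\alpha_0^\star(c) = (f_1, f_2)$, the fact that $\alpha_0^\star$ is an isomorphism of algebras with involution $(C_0(S,n),\sigma) \isom (\End_F S, \ad_n)^2$ sends the condition $c\sigma(c) = 1$ to $f_i \ad_n(f_i) = \Id$, i.e.\ $f_i$ is an isometry; and since $D_\star^+(f, f_1, f_2)$ commutes by the very definition of $C_0[f]$ combined with Proposition~\ref{prop:triasimilituderevone}, Lemma~\ref{lem:3} (statement $(d^+)$ and the final clause about properness) shows $f_1, f_2$ are proper. Then condition $(a^+)$ of Lemma~\ref{lem:3} with $g = f_1$, $h = f_2$ gives a scalar $\lambda$ with $\lambda f(x\star y) = f_1(x)\star f_2(y)$; comparing multipliers via $\lambda\mu = \mu(f_2) = 1$, $\mu\nu = \mu(f) = 1$, $\lambda\nu = \mu(f_1) = 1$ forces $\lambda = 1$, so the relation holds on the nose.

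Next I would produce the inverse and check homomorphy. Conversely, given a triple $(f, f_1, f_2)$ of proper isometries with $f(x\star y) = f_1(x)\star f_2(y)$, Lemma~\ref{lem:3} shows the diagram $D_\star^+(f, f_1, f_2)$ commutes, so $C_0[f]$ coincides with $\Int[f_1]\times\Int[f_2]$ transported through $\alpha_0^\star$; lifting $f$ to $\Spin(n)$ recovers $c$ with $\alpha_0^\star(c) = \pm(f_1, f_2)$, and the sign ambiguity is killed by insisting $f_1, f_2$ (not merely their classes) appear, so the two maps are mutually inverse bijections. That $c \mapsto (f,f_1,f_2)_\star$ respects products is immediate: if $c, c'$ give $(f, f_1, f_2)$ and $(f', f_1', f_2')$, then $cc'$ gives conjugation by $cc'$, namely $f f'$, and $\alpha_0^\star(cc') = (f_1 f_1', f_2 f_2')$ since $\alpha_0^\star$ is an algebra homomorphism.

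Finally, the cyclic-symmetry assertion: each of the three displayed relations is one of $(a^+)$, $(b^+)$, $(c^+)$ in Lemma~\ref{lem:3} for the triple $(f, f_1, f_2)$ — indeed $(a^+)$ reads $\lambda f(x\star y) = f_1(x)\star f_2(y)$, $(b^+)$ reads $\mu f_1(x\star y) = f_2(x)\star f(y)$, $(c^+)$ reads $\nu f_2(x\star y) = f(x)\star f_1(y)$ — and Lemma~\ref{lem:3} asserts these are equivalent, with all scalars forced to $1$ exactly as in the multiplier computation above since $f, f_1, f_2$ are isometries. Hence any one relation implies the other two. I expect the only mildly delicate point to be the bookkeeping that pins every scalar to $1$ and, in the converse direction, confirming that the spin lift $c$ is uniquely determined (rather than up to sign) once the \emph{isometries} $f_1, f_2$ themselves — not just their classes in $\PGO^+(n)$ — are prescribed; this is where the condition $c\sigma(c) = 1$, as opposed to $c\sigma(c) \in F^\times$, does the work.
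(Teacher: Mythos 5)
Your overall plan is sound and tracks the sketch the paper gives before the Proposition (the paper itself delegates the full proof to KMRT (35.7), so there is no detailed in-paper argument to compare against). The identification $\alpha_0^\star(c\sigma(c))=(f_1\ad_n(f_1),f_2\ad_n(f_2))$ giving that $f_1,f_2$ are isometries, the use of $C_0[f]=\Int(c)\rvert_{C_0(S,n)}$ to get properness via Lemma~\ref{lem:3}, and the derivation of the cyclic symmetry from the equivalence of $(a^+)$--$(c^+)$ are all correct and essentially what the paper intends.

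However, there is a genuine gap in the step where you pin the scalar to $1$. You invoke the multiplier relations $\lambda\mu=\mu(f_2)=1$, $\mu\nu=\mu(f)=1$, $\lambda\nu=\mu(f_1)=1$ and assert they ``force $\lambda=1$''. They do not: the first two give $\lambda=\nu$, and then $\lambda\nu=1$ gives only $\lambda^2=1$, i.e.\ $\lambda=\pm1$, which is a nontrivial ambiguity precisely because the statement is in characteristic $\neq2$. Commutativity of $D_\star^+(f,f_1,f_2)$ alone cannot do better, since that diagram only sees the classes $[f_1],[f_2]\in\PGO(n)$ and is unchanged if $f_1,f_2$ are rescaled. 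To nail $\lambda=1$ you must use the stronger hypothesis $cSc^{-1}\subset S$ directly, transported through $\alpha^\star$: writing $\alpha^\star(c)=\left(\begin{smallmatrix}f_1&0\\0&f_2\end{smallmatrix}\right)$ and $\alpha^\star(x)=\left(\begin{smallmatrix}0&\lambda_\star^{-1}r_x^\star\\ \ell_x^\star&0\end{smallmatrix}\right)$, the identity $cxc^{-1}=f(x)$ becomes $\alpha^\star(c)\alpha^\star(x)\alpha^\star(c)^{-1}=\alpha^\star(f(x))$, whose off-diagonal blocks read $f_2\ell_x^\star f_1^{-1}=\ell_{f(x)}^\star$ and $f_1 r_x^\star f_2^{-1}=r_{f(x)}^\star$, i.e.\ $f_2(x\star y)=f(x)\star f_1(y)$ and $f_1(x\star y)=f_2(x)\star f(y)$ on the nose (scalar $1$). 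From there $\lambda=1$ follows, and once $\lambda=1$ is known the remaining scalars $\mu,\nu$ are forced to $1$ exactly as you say, so the final cyclic-symmetry claim and the surjectivity argument (defining $c=(\alpha_0^\star)^{-1}(f_1,f_2)$ and verifying $c\sigma(c)=1$ and $cSc^{-1}\subset S$ from the exact relations) go through. The ``lifting $f$ to $\Spin(n)$'' phrasing in your converse step should be replaced by this explicit construction, since lifting $f\in\Orth^+(n)$ is itself only determined up to the center and would reintroduce the sign ambiguity you are trying to avoid.
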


Observe that the representation of elements of $\Spin(n)$ as triples of 
elements of $\Orth^+(n)$ depends on the choice of the composition
$\star$, hence the notation 
$({f},f_1,f_2)_\star $.

\medskip

We have an obvious trialitarian automorphism $\widehat\rho_\star$ of
$\gSpin(n)$ defined over $F$ by
\[
\widehat\rho_\star \colon ({f} ,f_1,f_2)_\star  \mapsto (f_1,{f}_2,f)_\star.
\]

The center  $\gZ$ of $\gSpin(n)$ is the scheme
$ \gmu_2^2$ with $\gmu_2(F) = \pm 1$. Viewing  $\gZ(F)$ 
as kernel of the multiplication map 
\[
\gmu_2^3(F) \to \gmu_2(F), \quad 
(\varepsilon_1, \varepsilon_2,\varepsilon_3)\mapsto
\varepsilon_1\varepsilon_2\varepsilon_3,
\]
the group scheme $\gZ$ admits a natural $\Sym_3$-action.
The exact sequence of schemes
\begin{equation}
\label{equ:exactseqspin}
1 \to \gZ \to \gSpin(n) \to \gPGO^+(n) \to 1,
\end{equation}
where the morphism $\gSpin(n) \to \gPGO^+(n)$ is induced by 
$(f,f_1,f_2)_\star \mapsto [f]$,
is  $\Sym_3$-equivariant  (see \cite[(35.13)]{KMRT}).
Thus the trialitarian action on $\gSpin(n)$
is a lift of the trialitarian action on $\gPGO^+(n)$.
 
 \medskip
 
Most of the results on trialitarian actions on $\gPGO^+(n)$ hold for
$\gSpin(n)$. 
Moreover, in a similar way we have a natural action of $\Sym_3$ on
$\gSpin(n)$. Let $\gH$ be the semidirect product of $\gSpin(n)$ and
$\Sym_3$. 

\begin{thm} \label{thm:bigclassifspin}
Two normalized symmetric compositions  $\star$ and $\dia$ on $(S,n)$
are isomorphic if and only if the trialitarian automorphisms
$\widehat\rho_\star$ and $\widehat\rho_\dia$ are conjugate 
over $F$ in $\gH$. 
\end{thm}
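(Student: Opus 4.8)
The plan is to mirror the proof of Theorem~\ref{thm:bigclassifrev}, replacing $\gPGO^+(n)$ by $\gSpin(n)$ and $\Aut\bigl(\gPGO^+(n)\bigr)$ by $\gH=\gSpin(n)\rtimes\Sym_3$. The crucial new ingredient is the description of $\Spin(n)$ as triples $(f,f_1,f_2)_\star$ with $f_i\in\Orth^+(n)$ and $f(x\star y)=f_1(x)\star f_2(y)$, together with the fact that $\widehat\rho_\star$ cyclically permutes the three components. First I would prove the forward implication: if $h\colon\star\to\dia$ is an isomorphism of \emph{normalized} symmetric compositions (so $h\in\Orth^+(n)$ by Corollary~\ref{cor:autoproper}), then conjugating a triple $(f,f_1,f_2)_\dia\in\Spin(n)$ by the element of $\gSpin(n)$ determined by $h$---concretely, sending $(f,f_1,f_2)_\dia$ to $(h^{-1}fh,\ h^{-1}f_1h,\ h^{-1}f_2h)_\star$, which is a valid $\star$-triple precisely because $h$ intertwines $\star$ and $\dia$---commutes with the cyclic permutation defining $\widehat\rho$. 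Hence $\widehat\rho_\star=c_h\circ\widehat\rho_\dia\circ c_h^{-1}$ inside $\gSpin(n)\rtimes\Sym_3$ for the appropriate inner automorphism $c_h$, giving conjugacy in $\gH$.

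For the converse, suppose $\widehat\rho_\star$ and $\widehat\rho_\dia$ are conjugate in $\gH$ by some element $\psi\in\gH(F)$. Projecting $\gH\to\gH/\gSpin(n)=\Sym_3$ shows $\widehat\rho_\star$ and $\widehat\rho_\dia$ map to conjugate order-$3$ elements of $\Sym_3$, so after modifying $\psi$ by a power of $\widehat\rho_\dia$ (which does not change the conjugate) we may assume $\psi\in\gSpin(n)(F)$, i.e.\ $\psi$ is given by a triple $(k,k_1,k_2)_\dia$. Now push everything down along the $\Sym_3$-equivariant sequence \eqref{equ:exactseqspin}: the image of $\psi$ in $\PGO^+(n)$ is $[k]$, and the relation $\widehat\rho_\star=\psi\,\widehat\rho_\dia\,\psi^{-1}$ descends to $\rho_\star=\Int[k]\circ\rho_\dia\circ\Int[k]^{-1}$. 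By Proposition~\ref{prop:class}, some scalar multiple $\lambda^{-1}k$ is a similarity $\dia\to\star$; since $\star$ and $\dia$ are both normalized, this similarity has multiplier $1$, i.e.\ is an isomorphism. (Alternatively one reads off the isomorphism directly from the triple: the defining relations of $\Spin(n)$ force $k$ to intertwine $\dia$ and $\star$ up to a scalar, and normalization kills the scalar.) This yields $\star\cong\dia$ and completes the equivalence.

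The main obstacle I anticipate is the bookkeeping in the converse direction: one must be careful that reducing $\psi$ to lie in $\gSpin(n)$ is legitimate, and that the descent along \eqref{equ:exactseqspin} genuinely transports the conjugation relation for $\widehat\rho$ to the conjugation relation for $\rho$---this uses the $\Sym_3$-equivariance of the sequence in an essential way, together with the fact that the center $\gZ$ is fixed setwise (though not pointwise) by the $\Sym_3$-action, so no ambiguity is introduced modulo the center. Once the relation $\rho_\star=\Int[k]\circ\rho_\dia\circ\Int[k]^{-1}$ is in hand, Proposition~\ref{prop:class} and Proposition~\ref{prop:normalize} do the rest with no further difficulty. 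The forward direction is essentially a formal computation with triples and should present no trouble.
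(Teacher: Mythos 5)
Your strategy---mirror the proof of Theorem~\ref{thm:bigclassifrev} using the triple description of $\Spin(n)$, with Proposition~\ref{prop:ontospin} supplying the surjectivity step---is the right one, and the paper itself leaves the remaining details to the reader after Proposition~\ref{prop:ontospin}. But both directions as you wrote them overlook the same phenomenon: an isomorphism $h\colon\star\to\dia$ of normalized symmetric compositions need \emph{not} be a proper isometry, so the conjugating element in $\gH$ cannot always be taken inside the normal subgroup $\gSpin(n)$. Your appeal to Corollary~\ref{cor:autoproper} to get $h\in\Orth^+(n)$ is a misapplication: that corollary concerns automorphisms $\star\to\star$, not isomorphisms between distinct compositions. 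Concretely, for $\star$ the para-octonion composition, octonion conjugation $x\mapsto\bar x$ is an \emph{improper} isometry that is an isomorphism $\star\to\star^{\op}$. For such $h$, your map $\Phi\colon(f,f_1,f_2)_\dia\mapsto(h^{-1}fh,\,h^{-1}f_1h,\,h^{-1}f_2h)_\star$ is indeed a well-defined automorphism of $\gSpin(n)$ intertwining $\widehat\rho_\dia$ and $\widehat\rho_\star$ (that verification is fine), but it descends to the \emph{outer} automorphism $\Int[h]^{-1}$ of $\gPGO^+(n)$, so it cannot be conjugation by any element of $\gSpin(n)$: the conjugating element of $\gH$ must have a transposition for its $\Sym_3$-component. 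Even in the proper case, the element of $\Spin(n)(F)$ realizing $\Phi$ is not ``determined by $h$'' alone: taking $(e,e_1,e_2)_\star\in\Spin(n)(F)$ as in the proof of Proposition~\ref{prop:ontospin} with $x\dia y=e(x)\star e_2^{-1}(y)$ and $e_2e_1e=\Id$, one checks that $(h,\,eh,\,e_1eh)_\star\in\Spin(n)(F)$ works, so the twist data enters.

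The converse has the dual defect: the reduction ``modify $\psi$ by a power of $\widehat\rho_\dia$ so that $\psi\in\gSpin(n)(F)$'' only succeeds when the $\Sym_3$-component of $\psi$ lies in the cyclic subgroup generated by the $3$-cycle, and fails when it is a transposition (exactly the case where $\widehat\rho_\star$ and $\widehat\rho_\dia$ have inverse $\Sym_3$-components). Fortunately this reduction is dispensable: conjugation by any $\psi\in\gH(F)$ descends, via the $\Sym_3$-equivariance of~\eqref{equ:exactseqspin}, to an $F$-automorphism of $\gPGO^+(n)$ of the form $\Int[k]$ for some $k\in\GO(n)$---proper or improper according to the $\Sym_3$-component of $\psi$---and then Proposition~\ref{prop:class} together with normalization gives the isomorphism as you say. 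This is precisely the role played by the improper similitude $f$ in the proof of Theorem~\ref{thm:bigclassifrev}, and you should import that case analysis into both directions of your argument.
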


One of the main steps in the proof for $\gPGO^+(n)$
was to show that any trialitarian automorphism is induced by a 
symmetric composition. We describe this step for $\gSpin(n)$.

\begin{prop}
  \label{prop:ontospin}
  Every trialitarian automorphism of $\gSpin(n)$ over $F$ in $\gH$ has the form
  $\widehat\rho_\dia$ for some normalized symmetric composition~$\dia$.
\end{prop}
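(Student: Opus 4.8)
The plan is to lift the surjectivity part of the proof of Theorem~\ref{thm:oneonecor} from $\gPGO^+(n)$ to $\gSpin(n)$, using the $\Sym_3$-equivariant sequence \eqref{equ:exactseqspin} to pass between the two levels. Fix a normalized symmetric composition $\star$ on $(S,n)$. An automorphism $\theta$ of $\gSpin(n)$ lying in $\gH$ is conjugation inside $\gH$ by an element $\gamma=(a,\pi)$ with $a\in\Spin(n)(F)$ and $\pi\in\Sym_3$, and for $\theta$ to be outer of order~$3$ the permutation $\pi$ must be a $3$-cycle. The two $3$-cycles act on $\gSpin(n)$ as $\widehat\rho_\star$ and $\widehat\rho_\star^{-1}=\widehat\rho_{\star^\op}$ (the $\gSpin(n)$ version of Theorem~\ref{thm:defcorresp}); since $\star^\op$ is again normalized, after replacing $\star$ by $\star^\op$ if necessary I may assume $\theta=\Int(a)\circ\widehat\rho_\star$ for some $a\in\Spin(n)(F)$.

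Next I would extract a cocycle relation. Because $\widehat\rho_\star$ has order~$3$ one computes $\theta^3=\Int\bigl(a\,\widehat\rho_\star(a)\,\widehat\rho_\star^2(a)\bigr)$, and since $\theta$ is trialitarian this is the identity, so $z:=a\,\widehat\rho_\star(a)\,\widehat\rho_\star^2(a)$ lies in the centre $\gZ(F)$. This is the sole genuinely new feature compared with the $\gPGO^+(n)$ argument, whose centre is trivial. A short conjugation computation gives $\widehat\rho_\star(z)=a^{-1}za=z$, and since $\widehat\rho_\star$ acts on $\gZ(F)=\ker\bigl(\gmu_2^3(F)\to\gmu_2(F)\bigr)$ by cyclically permuting the three coordinates, its only fixed point is $1$; hence $z=1$. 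Writing $a=(h,h_1,h_2)_\star$ with $h$, $h_1$, $h_2\in\Orth^+(n)(F)$ via the triple description of $\Spin(n)$, the relation $z=1$ reads $hh_1h_2=\Id_S$.

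Now I would construct the composition. Put $x\dia y=h^{-1}(x)\star h_2(y)$ for $x$, $y\in S$ and invoke Lemma~\ref{lem:fond} with $f=h^{-1}$, $g=h_2$. Since $\widehat\rho_\star$ lifts $\rho_\star$ along \eqref{equ:exactseqspin}, we have $\rho_\star[h]=[h_1]$ and $\rho_\star^2[h]=[h_2]$, whence $\rho_\star^2[f]\cdot\rho_\star[f]\cdot[f]=\bigl[(hh_1h_2)^{-1}\bigr]=1$ and $[g]=[h_2]=\rho_\star^2[f]^{-1}$; thus Lemma~\ref{lem:fond} applies and shows that $\dia$ is a symmetric composition with $\lambda_\dia=\mu(f)\mu(g)\lambda_\star=1$ — hence normalized — and that $\rho_\dia=\Int[f^{-1}]\circ\rho_\star=\Int[h]\circ\rho_\star$.

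It remains to identify $\theta$ with $\widehat\rho_\dia$. Projecting $\theta=\Int(a)\circ\widehat\rho_\star$ through \eqref{equ:exactseqspin} yields exactly $\Int[h]\circ\rho_\star=\rho_\dia$, and $\widehat\rho_\dia$ also lifts $\rho_\dia$. So $\theta$ and $\widehat\rho_\dia$ are two automorphisms of $\gSpin(n)$ inducing the same automorphism of $\gSpin(n)/\gZ=\gPGO^+(n)$; consequently $c\mapsto\theta\bigl(\widehat\rho_\dia^{-1}(c)\bigr)c^{-1}$ is a homomorphism $\gSpin(n)\to\gZ$, and since the simply connected semisimple group $\gSpin(n)$ has no nontrivial homomorphism into $\gmu_2^2$, it is trivial; therefore $\theta=\widehat\rho_\dia$. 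The main obstacle is the one flagged in the second paragraph — controlling the central ambiguity $a\,\widehat\rho_\star(a)\,\widehat\rho_\star^2(a)\in\gZ(F)$, which the $\gPGO^+(n)$ argument never had to face — together with the rigidity input that $\gSpin(n)$ carries no nontrivial character, which pins down the lift uniquely.
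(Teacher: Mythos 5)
Your proposal is correct, and while it follows the paper's overall skeleton (reduce modulo $\Spin(n)$ to write $\theta$ as a twist of $\widehat\rho_\star$ by some $a\in\Spin(n)(F)$, extract the relation on the triple $(h,h_1,h_2)$, build $\dia$ as a twist of $\star$), you handle the two delicate points quite differently from the paper, and in both cases somewhat more conceptually.

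First, where the paper simply asserts that $\tau^3=\Id$ gives $h_2h_1h=\Id_S$, you make explicit why the \emph{a priori} central element $z=a\,\widehat\rho_\star(a)\,\widehat\rho_\star^2(a)$ must be trivial: it is $\widehat\rho_\star$-fixed, and the cyclic $\Sym_3$-action on $\gZ=\ker(\gmu_2^3\to\gmu_2)$ has trivial fixed points. This is a genuine improvement in clarity over the paper's terse "it follows," and is exactly the new feature one must control when passing from the centerless $\gPGO^+(n)$ to $\gSpin(n)$. Second, where the paper checks directly that $x\dia y=h(x)\star h_2^{-1}(y)$ satisfies the axioms and then identifies $\tau=\widehat\rho_\dia$ by an explicit change-of-presentation computation (their displayed formulas \eqref{equ:diatria}--\eqref{equ:passage}), you outsource the verification of the composition axioms to the already-proved Lemma~\ref{lem:fond} (via its criterion~(c)), and you replace the final explicit computation by the rigidity argument that two automorphisms of $\gSpin(n)$ projecting to the same automorphism of $\gPGO^+(n)$ differ by a homomorphism $\gSpin(n)\to\gZ$, which is trivial because the simply connected semisimple group $\gSpin(n)$ has no nontrivial characters. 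This trades a direct computation for a structural input. Both approaches work; yours is shorter once Lemma~\ref{lem:fond} and the rigidity fact are in hand, while the paper's is more self-contained and produces the explicit passage formula \eqref{equ:passage}, which they presumably want for other purposes. (A very minor note: your assertion $\rho_\star[h]=[h_1]$, $\rho_\star^2[h]=[h_2]$ can be read off directly from the defining relation $h(x\star y)=h_1(x)\star h_2(y)$ of the triple via Lemma~\ref{lem:3}, without appealing to the lifting property of \eqref{equ:exactseqspin}; but the appeal is also valid.)
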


\begin{proof}
  We show how to modify the proof of the corresponding statement  in Theorem~\ref{thm:oneonecor}
  for
  $\gPGO^+(n)$.
  Let $\tau$ be a trialitarian automorphism of $\gSpin(n)$ over $F$, let
  $\star$ be a symmetric composition on $(S,n)$ and let
  $\widehat\rho_\star$ be the 
  associated trialitarian automorphism. In view of the exact
  sequence~\eqref{eq:exactseq}, 
  we have either $\tau \equiv\widehat\rho_\star$ or
  $\tau\equiv\widehat\rho_\star^{-1}$ 
  $\bmod\,\Spin(n)$. Substituting $\star^\op$ for
  $\star$ if necessary, we may assume
  $\tau\equiv\widehat\rho_\star\bmod\Spin(n)$, hence there
  exists $ (h, h_1,h_2)_\star  \in\Spin(n)$ such that
  \[
  \tau=\Int\big((h,h_1,h_2)_\star^{-1}\big)\circ\widehat\rho_\star,
  \]
  which means that for all $(f,f_1,f_2)_\star\in\Spin(n)$,
  \[
  \tau\bigl((f,f_1,f_2)_\star\bigr) = (h\inv f_1h, h_1\inv f_2 h_1,
  h_2\inv f h_2)_\star.
  \]
  It follows from $\tau^3=\widehat\rho_\star^3=\Id$ that 
  $h_2h_1 h =\Id_S$. Let
  \[
   x \dia y = h(x) \star h_2\inv(y) \quad \text{for $x$, $y \in S$}.
   \]
 One deduces easily from  $h_2h_1 h =\Id_S$ that $(x\dia y)\dia x =
 x\dia(y \dia x)= n(x) y$ 
 for  $x$, $y \in S$. Thus $\dia $ is a normalized symmetric
 composition on $(S,n)$. 
 For $(f,f_1,f_2)_\star \in \Spin(n)$ and $x$, $y\in S$ we have
 \begin{equation} \label{equ:diatria}
 \begin{array}{lcl}
 f(x \dia y) & = & f\big( h(x) \star h_2\inv(y)\big) \\
 & = & f_1h(x) \star f_2h_2\inv(y) \\
 & = & h\inv f_1h(x) \dia h_2f_2h_2\inv(y)
 \end{array}
 \end{equation}
  Let 
  \[
  \Spin(n)  =   \{ ({f},f_{1}',f'_{2})_\dia \ | \ f,\,f'_1,\,f'_2\in
  \Orth^+(n),\, {f}(x \dia y) =  f_1'(x)\star f_2'(y),\ x,y \in S\}  
  \]
  be the presentation of $\Spin(n)$ using the symmetric composition $\dia$.
  It follows from~\eqref{equ:diatria} that
  \begin{equation*} 
 f_1' = h\inv f_1h \quad \text{and} \quad f_2' = h_2\inv f_2 h_2,
 \end{equation*}
 so that the passage from the presentation of $\Spin(n)$ using
 $\star$ to the presentation of 
$ \Spin(n)$ using $\dia$ is described as
 \begin{equation} \label{equ:passage}
 (f,f_1,f_2)_\star  \mapsto (f, h\inv f_1h, h_2 f_2h_2\inv)_\dia.
 \end{equation}
 Since 
 $
 \widehat\rho_\dia(f,f'_1,f'_2)_\dia = (f'_1,f'_2,f)_\dia,
 $
 we get, using~\eqref{equ:passage},
 \[
  \widehat\rho_\dia\big((f,f_1,f_2)_\star\big) = 
  (h\inv f_1 h, hh_2 f_2 h_2\inv h\inv, h_2\inv fh_2)_\star
  \]
  so that the relation  $h_2h_1 h =\Id_S$ implies $\tau =  \widehat\rho_\dia$.
\end{proof}

\section{Classification of symmetric compositions}
\label{section:classificationsymcomp}

It follows from Theorem~\ref{thm:bigclassifrev} that the classification
of symmetric compositions up to similarity, the classification of
\emph{normalized} symmetric compositions up to isomorphism, 
and the classification of trialitarian automorphisms up to conjugation
are essentially equivalent. In this section we recall the
classification of normalized
symmetric compositions of dimension~$8$ over arbitrary fields. 

We first consider symmetric compositions over algebraically closed
fields. The norm form $n$ is then hyperbolic. Octonion algebras over
such fields are split and it is convenient to choose as a model the
Zorn algebra, which is defined in arbitrary characteristic.

\subsection{The Zorn algebra}
We denote by $\bullet$  the usual scalar product on
 $F^3=F\times F\times F$,  and by $\times$ the vector product:
for $a=(a_1,a_2,a_3)$  and $b=(b_1,b_2,b_3) \in F^3$,
\[
a\bullet b = a_1b_1+a_2b_2+a_3b_3\qquad\text{and}\qquad
a\times b= (a_2b_3-a_3b_2, a_3b_1-a_1b_3, a_1b_2-a_2b_1).
\]
The Zorn algebra is the set of matrices
\begin{equation*}
\mathfrak{Z} =
\left\{\left.
  \begin{pmatrix}
    \alpha & a\\ b&\beta
  \end{pmatrix}
\right|
\alpha,\beta\in F,\;a,b\in F^3\right\}
\end{equation*}
with the product
\[
\begin{pmatrix}
  \alpha & a\\b&\beta
\end{pmatrix}
\cdot
\begin{pmatrix}
  \gamma&c\\d&\delta
\end{pmatrix}
=
\begin{pmatrix}
  \alpha\gamma+a\bullet d&\alpha c+\delta a -b\times d\\
  \gamma b+\beta d+a\times c& \beta\delta + b\bullet c
\end{pmatrix},
\]
the norm
\[
n
\begin{pmatrix}
  \alpha&a\\b&\beta
\end{pmatrix}
=\alpha\beta - a\bullet b,
\]
and the conjugation
\[
\overline{
  \begin{pmatrix}
    \alpha&a\\b&\beta
  \end{pmatrix}
}=
\begin{pmatrix}
  \beta&-a\\-b&\alpha
\end{pmatrix},
\]
which is such that  $\xi\cdot\overline{\xi} = \overline{\xi}\cdot\xi =
n(\xi)$ for all $\xi\in\mathfrak{Z}$ (see \cite[p. 144]{zorn:30}). 

\medskip
\ 
In view of Example~\ref{ex:cayley}
 the product
\begin{equation*}
\begin{pmatrix}
  \alpha&a\\b&\beta
\end{pmatrix}
*
\begin{pmatrix}
  \gamma&c\\d&\delta
\end{pmatrix}
=
\overline{
\begin{pmatrix}
  \alpha&a\\b&\beta
\end{pmatrix}
}\cdot\overline{
\begin{pmatrix}
  \gamma&c\\d&\delta
\end{pmatrix}
}
=
\begin{pmatrix}
  \beta\delta+a\bullet d& -\beta c-\gamma a-b\times d\\
  -\delta b-\alpha d+a\times c&\alpha\gamma+b\bullet c
\end{pmatrix},
\end{equation*} 
defines a symmetric composition on the quadratic space
$(\mathfrak{Z},n)$. We call $\star$ the \emph{para-Zorn composition}. 

\medskip



We use the technique of Section~\ref{sec:twisting} to twist the
para-Zorn composition. Let $p\colon F^3\to F^3$ be the map
$(x_1,x_2,x_3) \mapsto (x_3,x_1,x_2)$, and let
\[
\pi\colon\mathfrak{Z}\to\mathfrak{Z},\qquad
\begin{pmatrix}
  \alpha&a\\b&\beta
\end{pmatrix}=
\begin{pmatrix}
  \alpha&p(a)\\ p(b)&\beta
\end{pmatrix}.
\]
The map $\pi$ is an automorphism of $\star$ of order~$3$, so we may
consider the twisted composition $\star_\pi$ as in~\eqref{eq:Petwist},
which we call the \emph{split Petersson symmetric composition}. (The
algebra $(\mathfrak Z,\star_\pi)$ is also known as the \emph{split
  pseudo-octonion algebra}.)

\begin{thm} \label{thm:petersson}
  Over an algebraically closed field, there are exactly two
  symmetric compositions up to isomorphism: the para-Zorn
  composition and the split Petersson composition.
\end{thm}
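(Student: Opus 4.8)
The plan is to classify normalized symmetric compositions on an $8$-dimensional hyperbolic quadratic space by organizing them according to their automorphism groups and using the twisting technique of Section~\ref{sec:twisting}. First I would fix the para-Zorn composition $\star$ on $(\mathfrak{Z},n)$ as a base point. By Proposition~\ref{prop:oneonecor}, every symmetric composition on $(\mathfrak{Z},n)$ is (up to scalar, hence up to isomorphism after normalizing via Proposition~\ref{prop:normalize}) a twist of $\star$ or of $\star^{\op}$; but since $\star$ is the para-octonion composition of the split octonion algebra, conjugation by the octonion conjugation map intertwines $\star$ and $\star^{\op}$ (the observation $\overline{x\star y}=\overline{y}\star\overline{x}$ in Example~\ref{ex:cayley}), so $\star^{\op}\cong\star$ and it suffices to consider twists of $\star$ alone. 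A twist is determined by a choice of $f\in\GO^+(n)$ with $\rho_\star^2[f]\cdot\rho_\star[f]\cdot[f]=1$, i.e.\ by a cohomologically trivial cocycle condition; over an algebraically closed field I expect to reduce, after adjusting $f$ by an automorphism of $\star$, to the Petersson case $g=f^{-1}$ with $f^3=\Id$, giving the composition $\star_f$ of~\eqref{eq:Petwist}. Thus the real content is: (i) every symmetric composition is isomorphic to $\star_f$ for some order-dividing-$3$ automorphism $f$ of $\star$, and (ii) up to conjugacy in $\gAut(\star)$ there are exactly two such $f$ (namely $f=\Id$, giving $\star$ back, and $f=\pi$ or a conjugate, giving the split Petersson composition), with $\pi$ and $\pi^{-1}$ conjugate in $\gAut(\star)$ so that $\star_\pi\cong\star_{\pi^{-1}}$.

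For step (i), I would argue as follows. Given a normalized $\dia$, write $\dia$ as a twist $x\dia y=f(x)\star g(y)$ with $f\in\GO^+(n)$ and $[g]=\rho_\star^2[f^{-1}]$. The element $\bar f=[f]\in\PGO^+(n)$ satisfies $\rho_\star^2(\bar f)\rho_\star(\bar f)\bar f=1$, which is exactly the condition that $\bar f$ defines a $1$-cocycle for the $\mathbf Z/3$-action $\rho_\star$ on $\gPGO^+(n)$. Over a separably closed field this cocycle is a coboundary (by surjectivity of the relevant map on points, or directly by Lang's theorem type arguments, or simply because all twisted forms are split), so there is $h\in\GO^+(n)$ with $\bar f=\rho_\star[h]^{-1}\cdot[h]$ up to the appropriate normalization. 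Feeding this into part (i)--(ii) of Lemma~\ref{lem:fond}, we get that $\dia$ is isomorphic to a composition of the form $\star_{f_0}$ where $f_0$ is an \emph{automorphism} of $\star$ (an element of $\gAut(\star)\cong(\gPGO^+(n))^{\rho_\star}$, which is the group $\mathbf G_2$ over an algebraically closed field) of order dividing~$3$, via the conjugation-by-$h$ bookkeeping already worked out in Proposition~\ref{prop:class} and the twist discussion.

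For step (ii), I would invoke the structure of $\mathbf G_2$ over an algebraically closed field: elements of order $1$ or $3$ in $\mathbf G_2(F)$ fall into exactly two conjugacy classes, the trivial one and the class of a regular order-$3$ element (for $\charac F\neq3$ this is classical semisimple conjugacy theory, and for $\charac F=3$ one uses that $\pi$ is a unipotent element of order~$3$ realized explicitly as the $3$-cycle map, again a single nontrivial class in the relevant stratum). The trivial automorphism gives $\star_{\Id}=\star$, the para-Zorn composition; a representative regular order-$3$ automorphism is $\pi$, giving $\star_\pi$, the split Petersson composition. Moreover $\pi$ and $\pi^{-1}$ are conjugate in $\gAut(\star)$ (e.g.\ by an element reversing the two blocks of $\mathfrak Z$ combined with a sign, or directly: $\pi^{-1}=p^{-1}$ acting on both $F^3$-slots, and $p,p^{-1}$ are conjugate in the copy of $\SL_3$ or $\mathbf{G}_2$ present here), hence $\star_\pi\cong(\star_\pi)^{\op}\cong\star_{\pi^{-1}}$ and the two twists $\pi,\pi^{-1}$ yield isomorphic compositions. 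It remains to check $\star\not\cong\star_\pi$: this follows because their automorphism group schemes are non-isomorphic --- by Theorem~\ref{thm:fixedpoints} the automorphism group of $\star$ is $\gAut(\star)=\mathbf{G}_2$ (this is exactly the statement that the fixed subgroup of $\rho_\star$ for a para-octonion composition is of type $\mathrm G_2$), whereas $\gAut(\star_\pi)$ is the centralizer of $\pi$ in $\mathbf{G}_2$, which is of type $\mathrm A_2$; these groups have different dimensions ($14$ versus $8$), so the compositions cannot be isomorphic.

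The main obstacle I anticipate is the cohomological vanishing in step (i): making precise, in arbitrary characteristic (including $\charac F=3$, where $\gAut(\star_\pi)$ is non-smooth and the naive point-counting can fail), that every $\rho_\star$-cocycle in $\gPGO^+(n)$ is trivial over an algebraically closed field, and then extracting from it an honest automorphism $f_0$ of $\star$ rather than merely a similarity. I expect this to come down to the fact that all inner twisted forms of the split group $\gPGO^+(n)$ over an algebraically closed field are again split --- equivalently, $H^1(F,\gPGO^+(n))=1$ for $F$ algebraically closed, which holds since $\gPGO^+(n)$ is connected --- combined with a careful reduction using Lemma~\ref{lem:fond}(i)--(ii) to normalize the twisting similarity $f$ against an automorphism of $\star$. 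Everything after that is the finite-group bookkeeping of order-$3$ conjugacy classes in $\mathbf G_2$, which is standard.
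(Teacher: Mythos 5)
The paper does not give an internal proof of this theorem: it simply cites Petersson (\emph{Satz 2.7}, for $\charac F \neq 2,3$) and Elduque--P\'erez (for arbitrary characteristic). Your approach---running the twisting/cohomology machinery from Sections~5--7 of the paper as an actual classification argument---is therefore a genuinely different route, and it is an appealing idea, but as written it has gaps that I do not think can be papered over.

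The central problem is the cohomological step. You claim that, over a separably closed field, the cocycle condition $\rho_\star^2[f]\cdot\rho_\star[f]\cdot[f]=1$ forces $[f]$ to be a coboundary, citing Lang's theorem, surjectivity on points, or the triviality of twisted forms. None of these apply: this is \emph{group} cohomology for the action of a cyclic group of order $3$ on $\gPGO^+(n)(F)$ through the algebraic automorphism $\rho_\star$, not Galois cohomology of $\gPGO^+(n)$ over a field extension. Indeed, if $H^1(\mathbf{Z}/3,\PGO^+(n))$ with the $\rho_\star$-action were trivial, then by Lemma~\ref{lem:fond}(i)--(ii) together with Proposition~\ref{prop:class} \emph{every} symmetric composition would be isomorphic to $\star$, which contradicts the very statement you are trying to prove (the para-Zorn and split Petersson compositions are not isomorphic). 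So the first cohomology cannot vanish; what you would actually need is that every cocycle is cohomologous to one taking values in $\gAut(\star)=(\gPGO^+(n))^{\rho_\star}$, which is a genuinely nontrivial claim (of Steinberg/Borel--de Siebenthal flavour: reducing $\rho_\star$-twisted conjugacy classes in the coset $\rho_\star\Int\bigl(\PGO^+(n)\bigr)$ to a $\rho_\star$-stable maximal torus). Your ``up to the appropriate normalization'' is exactly where this gap lives, and it is not filled.

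There is also a problem in step (ii). Over an algebraically closed field of characteristic $\neq 3$, the group $\mathbf G_2$ has \emph{two} nontrivial conjugacy classes of elements of order $3$ (with Kac coordinates $(0,1,0)$ and $(1,0,1)$, centralizers of type $A_2$ resp.\ $A_1\times\gGm$), not one. Neither of them is regular: an order-$3$ element of $\mathbf G_2$ cannot have centralizer equal to the maximal torus, since the affine marks are $1,3,2$. So ``exactly two classes, the trivial one and the regular one'' is false, and even if it were, the correct equivalence relation on twists $\star_{f_0}$ is not $\mathbf G_2$-conjugacy of $f_0$ but $\rho_\star$-twisted conjugacy in $\PGO^+(n)$---a coarser relation which is precisely what collapses the extra $G_2$-class and gives two rather than three compositions. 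That collapse still needs an argument. Finally, you correctly flag characteristic $3$ as a problem, but it is not merely a technical obstacle: $\pi$ is unipotent there, the fixed-point group scheme of the Petersson composition is non-smooth, and the structure theory you invoke (semisimple conjugacy classes, centralizers of finite-order elements via Kac coordinates) simply is not available. This is precisely why the paper defers to the Elduque--P\'erez classification for arbitrary characteristic.
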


\begin{proof}
  The claim is a result of Petersson \cite[Satz 2.7]{petersson:69}
  if the field has characteristic different from $2$ and $3$, and is due to
  Elduque-P\'erez \cite{EP:96} in arbitrary characteristic.   
\end{proof}

\medskip

Over arbitrary fields we consider  two kinds
of symmetric compositions, which we call type I and type II.
Symmetric compositions  of \emph{type} I are
forms of the para-Zorn composition, i.e., $\star$ is of \emph{type} I
if and only if $\star$ is isomorphic to the para-Zorn composition
after scalar extension to an algebraic closure. Similarly, we say that
$\star$ is of \emph{type} II if it is a form of the split
Petersson composition. Thus, Theorem~\ref{thm:petersson}
shows that every symmetric composition is either of type~I or of type~II.

\subsection{Type I}
The classification of symmetric compositions of type~I is particularly
simple.

\begin{thm}
  \label{thm:TypeI}
  Let $(S,n)$ be a $3$-fold Pfister quadratic space over an arbitrary
  field $F$. Up to isomorphism, there is a unique normalized symmetric
  composition of type~I on $(S,n)$, given by the para-octonion composition.
\end{thm}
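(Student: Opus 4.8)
The plan is to establish existence and uniqueness separately. Existence is immediate: by Example~\ref{ex:cayley}, any octonion algebra $(\Cay,n)$ carries the para-octonion composition $x\star y=\overline x\cdot\overline y$, which is normalized; and since $(S,n)$ is a $3$-fold Pfister quadratic space, there is an octonion algebra with norm form isometric to $n$, so we may transport the para-octonion composition onto $(S,n)$. It remains to check that this composition is of type~I, i.e.\ that over an algebraic closure it becomes isomorphic to the para-Zorn composition; but over an algebraically closed field the octonion algebra is split, hence isomorphic to the Zorn algebra, and the para-octonion composition is by construction the para-Zorn composition in that model.

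For uniqueness, let $\dia$ be any normalized symmetric composition of type~I on $(S,n)$, and let $\star$ denote the para-octonion composition. Since both are of type~I, after extension to a separable (or algebraic) closure $F_{\sep}$ they become isomorphic; by Theorem~\ref{thm:bigclassifrev} (or directly by Proposition~\ref{prop:class}), this means $\rho_{\dia}$ and $\rho_{\star}$ are conjugate in $\gAut(\gPGO^+(n))(F_{\sep})$. The natural strategy is a Galois-cohomology descent argument: the isomorphism classes of normalized symmetric compositions of a fixed type that become isomorphic to $\star$ over $F_{\sep}$ are classified by $H^1(F,\gAut(\star))$, and by Theorem~\ref{thm:fixedpoints} the group $\gAut(\star)$ is the fixed subgroup $\gPGO^+(n)^{\rho_\star}$, which for the para-octonion composition is the automorphism group of the octonion algebra, a group of type~$\mathrm G_2$. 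Since $n$ is a fixed $3$-fold Pfister form, the octonion algebra with that norm is unique up to isomorphism (an octonion algebra is determined by its norm form), so the relevant cohomology set is a single point and $\dia\cong\star$.

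The main obstacle is making the descent rigorous in a characteristic-free way, in particular pinning down $\gAut(\star)$ for the para-octonion composition and invoking the right uniqueness statement for octonion algebras. Concretely, one must show that an $F$-automorphism of the para-octonion composition on $(\Cay,n)$ is the same thing as an automorphism of the octonion algebra $\Cay$ (this follows because the para-octonion multiplication determines $\cdot$ and $\overline{\phantom x}$, the conjugation being recoverable from $n$ and $\star$, e.g.\ via $\overline x\cdot 1 = \overline x$ together with the expression of $1$ from Lemma~\ref{lem:2.1}), and then appeal to the fact that an octonion algebra is classified by its norm form, so two octonion algebras with isometric norms are isomorphic. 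Granting this, the identification $\gAut(\star)=\gAut(\Cay)$ of type~$\mathrm G_2$, plus the vanishing of the obstruction coming from the uniqueness of $\Cay$, forces $\dia\cong\star$ and completes the proof.
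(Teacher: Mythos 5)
Your route is essentially the paper's: reduce to the facts that (i) a form of the split para-octonion composition is a para-octonion composition, and (ii) an octonion algebra is determined up to isomorphism by its norm form. The paper simply cites \cite[\S 34.A]{KMRT} and \cite[(33.19)]{KMRT} for these; you sketch a Galois-cohomology argument for the descent step (i), which is a reasonable expansion. Two points need tightening. First, $H^1\bigl(F,\gAut(\star)\bigr)\cong H^1(F,\mathrm{G}_2)$ is \emph{not} a single point in general --- it classifies all octonion algebras over $F$; what your argument actually requires is that the kernel of the induced map $H^1\bigl(F,\gAut(\star)\bigr)\to H^1\bigl(F,\gGO(n)\bigr)$ (equivalently, the fiber over the class of $n$) is trivial, and that triviality is precisely fact (ii). Second, Lemma~\ref{lem:2.1} yields no expression for the identity element $1$ of $\Cay$, so your proposed recovery of the conjugation is not justified as written. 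The standard way to see $\gAut(\star)=\gAut(\Cay)$ is to characterize $1$ intrinsically as the para-unit, i.e.\ the unique $e\in S$ with $n(e)=1$, $e\star e=e$ and $\ell^\star_e=r^\star_e$; then $\ell^\star_e$ is the conjugation and $x\cdot y=\overline{x}\star\overline{y}$ recovers the octonion product. Modulo these repairs your argument agrees with the paper's.
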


\begin{proof}
  Forms of the split para-octonion algebra are para-octonion algebras
  (see for example \cite[\S 34 A]{KMRT}), hence symmetric compositions
  of type~I are para-octonion compositions as defined in
  Example~\ref{ex:cayley}. Isomorphisms of octonion algebras are
  isomorphisms of the corresponding para-octonion algebras, and
  octonion algebras with isometric norms are isomorphic (see for
  example \cite[(33.19)]{KMRT}).
\end{proof}

The automorphism group of a symmetric composition of type~I is the
automorphism group of the corresponding octonion algebra; it is a
simple algebraic group of type~$\mathrm{G}_2$. Accordingly, symmetric
compositions of type~I are also called \emph{symmetric compositions of
  type~$\mathrm{G}_2$}. 

\subsection{Type II}

The classification of symmetric compositions of type~II has a
completely different flavor in characteristic~$3$. We first discuss
the case where the characteristic is different from~$3$, and
distinguish two subcases, depending on whether the base field contains
a primitive cube root of unity or not.

Suppose first $F$ is a field of characteristic different from~$3$
containing a primitive cube root of unity $\omega$. Let $A$ be a
central simple $F$-algebra of degree~$3$. For the reduced
characteristic polynomial of $a\in A$, we use the notation
\[
X^3-\Trd(a)X^2+\Srd(a)X-\Nrd(a)\,1,
\]
so $\Trd$ is the reduced trace map on $A$, $\Srd$ is the reduced
quadratic trace map, and $\Nrd$ is the reduced norm. Let $A^0\subset
A$ be the kernel of $\Trd$. We define a multiplication $\star$ on
$A^0$ by
\begin{equation}
  \label{eq:Okubo}
  x\star\, y= \frac{yx -\omega xy}{1-\omega}- \textstyle{\frac{1}{3}}\Trd(xy)\,1
\end{equation}
and a quadratic form $n$ by
\begin{equation}
  \label{eq:normOkubo}
  n(x)=-{\textstyle\frac13}\Srd(x).
\end{equation}

The following result can be found for instance in \cite[(34.19),
(34.25)]{KMRT}: 

\begin{prop}
  \label{prop:Okubo1}
  The quadratic space $(A^0,n)$ is hyperbolic, and $\star$ is a
  normalized symmetric composition on $(A^0,n)$.
\end{prop}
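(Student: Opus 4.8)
The statement has two essentially independent parts: that $(A^0,n)$ is a hyperbolic quadratic space, and that $\star$ is a symmetric composition on $(A^0,n)$ with multiplier~$1$. Two preliminary observations will be used throughout: first, $\dim_F A^0=8$; second, polarizing the reduced quadratic trace (whose associated symmetric bilinear form is $(x,y)\mapsto\Trd(x)\Trd(y)-\Trd(xy)$) gives
\[
b_n(x,y)=\tfrac13\Trd(xy)\qquad\text{for }x,\,y\in A^0 .
\]
Since $\Trd(1)=3\neq0$ (as $\charac F\neq3$), the line $F\cdot1$ is anisotropic for the nonsingular trace bilinear form of $A$, so $A^0=(F\cdot1)^{\perp}$ is nonsingular; thus $(A^0,n)$ is a nonsingular $8$-dimensional quadratic space, and it suffices to produce in it a totally isotropic subspace of dimension~$4$ to conclude that it is hyperbolic.

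To do this I would use that $A$ is a cyclic algebra (every central simple algebra of degree~$3$ is cyclic), and that, $F$ containing a primitive cube root of unity, its maximal étale cubic subalgebra may be written $K=F[t]/(t^3-b)$ with $b\in F^\times$: so $A=K\oplus Ke\oplus Ke^2$ with $\sigma(t)=\omega t$, $e^3=a\in F^\times$ and $ek=\sigma(k)e$ for $k\in K$, whence $A^0=K^0\oplus Ke\oplus Ke^2$ where $K^0=\Ker(\Trd|_K)$. I claim $W:=Ke\oplus Ft$ is totally isotropic of dimension~$4$. Indeed $(ke)^3=N_{K/F}(k)\,a\in F$, so the reduced characteristic polynomial of $ke$ is $X^3-N_{K/F}(k)a$, giving $\Srd(ke)=0$ and $n(ke)=0$; similarly $t^3=b\in F$ gives $\Srd(t)=0$ and $n(t)=0$; and $b_n(ke,t)=\tfrac13\Trd(k\sigma(t)e)=0$ because $k\sigma(t)e\in Ke$ has reduced trace zero. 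Since $n\big(\sum_ic_iw_i\big)=\sum_ic_i^2n(w_i)+\sum_{i<j}c_ic_jb_n(w_i,w_j)$, the form $n$ vanishes on $W$. Hence $(A^0,n)$ is hyperbolic.

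For the composition axioms I would compute directly. First, $\Trd(x\star y)=\frac{\Trd(xy)-\omega\Trd(xy)}{1-\omega}-\tfrac13\cdot3\,\Trd(xy)=0$, so $x\star y\in A^0$. Condition~(2): for $x$, $z\in A^0$ one has $\Trd(x)=\Trd(z)=0$, so by cyclicity of the reduced trace both $\Trd\big((x\star y)z\big)$ and $\Trd\big(x(y\star z)\big)$ reduce to $\frac{\Trd(yxz)-\omega\Trd(xyz)}{1-\omega}$, and $\Trd(yxz)=\Trd(xzy)$; hence $b_n(x\star y,z)=b_n(x,y\star z)$. The only substantial point is condition~(1), the multiplicativity $n(x\star y)=n(x)n(y)$. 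Writing $n(x\star y)=-\tfrac13\Srd(x\star y)$ and using $\Srd(w)=-\tfrac12\Trd(w^2)$ for $w\in A^0$, together with $1+\omega^2=-\omega$ and $(1-\omega)^2=-3\omega$, a routine expansion shows this identity is equivalent to the trace identity
\[
2\,\Trd\big((xy)^2\big)+4\,\Trd(x^2y^2)-2\,\Trd(xy)^2=\Trd(x^2)\,\Trd(y^2)\qquad\text{for }x,\,y\in A^0 .
\]
This is a polynomial identity in the structure constants and the coordinates of $x,y$, so it may be checked after extending scalars to a splitting field, where $A^0\cong\mathfrak{sl}_3$ and it is a consequence of the polarized Cayley--Hamilton (Newton) identities for $3\times3$ matrices. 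This yields condition~(1) with $\lambda_\star=1$, so $\star$ is a normalized symmetric composition. (When $\charac F=2$, which can occur, one uses $\Srd$ in place of $-\tfrac12\Trd(w^2)$ throughout; cf.\ \cite[(34.19)]{KMRT}.)

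The genuine obstacle is the multiplicativity of $n$: the well-definedness of $\star$, condition~(2), and the nonsingularity statement are pure trace bookkeeping, whereas condition~(1) is the Okubo identity, whose verification is the one nontrivial calculation---carried out either by checking the displayed trace identity on generic $3\times3$ matrices, or, as the authors do, by invoking \cite[(34.19), (34.25)]{KMRT}.
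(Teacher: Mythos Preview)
Your argument is correct, and it is considerably more explicit than the paper's own treatment: the paper does not prove this proposition at all but simply refers to \cite[(34.19), (34.25)]{KMRT}. So the comparison is between a citation and your direct computation. Your hyperbolicity argument via the symbol-algebra presentation $A=(a,b)_\omega$ and the totally isotropic $4$-plane $Ke\oplus Ft$ is clean and valid in every characteristic $\neq3$ (a small wording quibble: there is no \emph{the} maximal \'etale cubic subalgebra, but you only need one of Kummer type, and Wedderburn's cyclicity theorem together with $\omega\in F$ supplies it). Your verification of $x\star y\in A^0$ and of condition~(2) is correct. For condition~(1), your reduction to the displayed trace identity on $\mathfrak{sl}_3$ is accurate and the identity does hold; the only soft spot is that the reduction uses $\Srd(w)=-\tfrac12\Trd(w^2)$, so it genuinely breaks when $\charac F=2$, and your parenthetical does not say how to repair it beyond citing \cite{KMRT}---which is exactly what the paper itself does, so you are no worse off. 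If you want a uniform argument, it is slicker to avoid the trace identity entirely: the polarized Cayley--Hamilton relation
\[
x^2y+xyx+yx^2 \;=\; 3n(x)\,y+\Trd(xy)\,x+\Trd(x^2y)\cdot1\qquad(x,y\in A^0)
\]
yields $(x\star y)\star x=x\star(y\star x)=n(x)\,y$ by direct substitution (no division by~$2$), and then applying this twice---once with $a=x\star y$, $b=x$ and once with $a=y$, $b=x$---gives $n(x)n(y)\,x=n(x\star y)\,x$, hence $n(x\star y)=n(x)n(y)$ in every characteristic $\neq3$.
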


When the algebra $A$ is split, the composition $\star$ is isomorphic
to the split Petersson composition.
Symmetric compositions as in Proposition~\ref{prop:Okubo1} are called
\emph{Okubo compositions}. The automorphism group of an Okubo
composition associated to a central simple algebra $A$ is
$\gPGL_1(A)$. Therefore, these compositions are also called
\emph{symmetric compositions of type~${}^1\!\mathrm{A}_2$}.

\medskip

Suppose next $F$ is a field of characteristic different from~$3$ that
does not contain a primitive cube root of unity. Let $\omega$ be a
primitive cube root of unity in some separable closure of $F$, and let
$K=F(\omega)$, a separable quadratic extension of $F$. Let $B$ be a
central simple $K$-algebra of degree~$3$ with a unitary involution
$\tau$ leaving $F$ fixed. We let $\SSym(B,\tau)^0$ denote the
$F$-vector space of $\tau$-symmetric elements of reduced trace
zero. Formula~\eqref{eq:Okubo} defines a multiplication on
$\SSym(B,\tau)^0$, and the form $n$ of \eqref{eq:normOkubo} is a
quadratic form on $\SSym(B,\tau)^0$.

The following result is proved in \cite[(34.35)]{KMRT}:

\begin{prop}
  \label{prop:Okubo2}
  The quadratic space $(\SSym(B,\tau)^0,n)$ is a $3$-fold Pfister
  quadratic space that becomes hyperbolic over $K$, and $\star$ is a
  normalized symmetric composition on this space.
\end{prop}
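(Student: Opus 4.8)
The plan is to deduce Proposition \ref{prop:Okubo2} from the corresponding result over the quadratic closure $K=F(\om)$, namely Proposition \ref{prop:Okubo1}, by a descent argument. First I would record the algebraic identities underlying the statement: for $x$, $y\in\SSym(B,\tau)^0$ one must check that $x\star y$ lies again in $\SSym(B,\tau)^0$, i.e.\ that it is $\tau$-symmetric and has reduced trace zero. The trace-zero part is immediate from the subtraction of $\frac13\Trd(xy)\,1$ in \eqref{eq:Okubo}, together with the fact that $\Trd(yx-\om xy)=(1-\om)\Trd(xy)$. For $\tau$-symmetry one uses that $\tau$ is a unitary (conjugate-linear) involution fixing $F$, so $\tau(\om)=\om^2=\om^{-1}$; applying $\tau$ to the numerator $yx-\om xy$ and using $\tau(x)=x$, $\tau(y)=y$ gives $xy-\om^{-1}yx$, and one verifies $\tau\bigl(\frac{yx-\om xy}{1-\om}\bigr)=\frac{xy-\om^{-1}yx}{1-\om^{-1}}=\frac{yx-\om xy}{1-\om}$ after clearing denominators. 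Thus $\star$ is a well-defined $F$-bilinear multiplication on $\SSym(B,\tau)^0$.

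Next I would identify this structure with a twisted form. Extending scalars to $K$, we have $B\tens_F K\cong B\times B^{\op}$ (or $B$ itself, depending on conventions) and $\tau$ becomes the exchange involution, so that $\bigl(\SSym(B,\tau)^0,\star\bigr)\tens_F K$ is isomorphic to $(A^0,\star)$ for $A=B$ viewed as a $K$-algebra; this is precisely the setting of Proposition \ref{prop:Okubo1}, which gives that $(A^0,n)$ is an $8$-dimensional hyperbolic quadratic space over $K$ and $\star$ is a normalized symmetric composition on it. Since the formation of $n$ via \eqref{eq:normOkubo} and the conditions (1)--(2) in Definition \ref{def:symcomp} are all defined by polynomial identities compatible with scalar extension, and since the multiplier $\lambda_\star=1$ is preserved, it follows that over $F$ the form $n$ is nonsingular of dimension $8$ and $\star$ is a normalized symmetric composition on $(\SSym(B,\tau)^0,n)$: a composition that becomes normalized symmetric after a faithfully flat base change already is one.

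It then remains to show that $(\SSym(B,\tau)^0,n)$ is a $3$-fold Pfister quadratic space. Here I would invoke the general fact recalled in Section \ref{section:classificationsymcomp} (following \cite[(33.18), (33.29)]{KMRT}): the norm form of any $8$-dimensional symmetric composition that represents $1$ is a $3$-fold Pfister form. Since $\star$ is normalized, $n(x\star y)=n(x)n(y)$, and one checks $n$ represents $1$ (for instance because $n$ is isotropic — it becomes hyperbolic over $K$ — and a nonsingular isotropic form represents every value, in particular $1$; alternatively one exhibits an explicit trace-zero symmetric element of norm $1$). Applying the cited result gives that $n$ is $3$-fold Pfister, and we already noted it becomes hyperbolic over $K$ by Proposition \ref{prop:Okubo1}.

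The main obstacle I anticipate is the bookkeeping in the descent step: one must be careful that the isomorphism $B\tens_F K\cong B\times B^{\op}$ is set up so that the $\om$ appearing in \eqref{eq:Okubo} (a scalar in $K$, not fixed by $\tau$) is transported correctly, and in particular that the formula \eqref{eq:Okubo} is genuinely $F$-bilinear on $\SSym(B,\tau)^0$ despite involving $\om\notin F$ — which is exactly why the $\tau$-symmetry computation above, showing the output lands back in the $F$-subspace, is essential rather than formal. Everything else is either a direct consequence of Proposition \ref{prop:Okubo1} via faithfully flat descent or a citation to \cite{KMRT}.
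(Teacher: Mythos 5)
The paper does not actually prove this proposition; it simply cites \cite[(34.35)]{KMRT}. Your descent argument from Proposition~\ref{prop:Okubo1} is therefore a genuinely different route, and a reasonable one: the verification that $x\star y$ lands back in $\SSym(B,\tau)^0$ (trace-zero and $\tau$-symmetry computations) is correct, the identification $\SSym(B,\tau)^0\tens_F K\cong B^0$ via $x\tens\lambda\mapsto\lambda x$ is the right Galois-descent move, and the conditions of Definition~\ref{def:symcomp} do descend from $K$ to $F$ because they are polynomial identities and $F\hookrightarrow K$ is injective; the nonsingularity of $b_n$ and $\dim_F\SSym(B,\tau)^0=8$ also descend correctly. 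What you get beyond the citation is an explicit reduction to the split case, at the cost of carrying out the bookkeeping you flag.

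There is, however, one genuine error in the last step, where you argue that $n$ represents~$1$. Your primary justification --- that $n$ is isotropic over $F$ because it becomes hyperbolic over $K$ --- is false: a form anisotropic over $F$ can perfectly well become hyperbolic over a quadratic extension (e.g.\ $\langle 1,1\rangle$ over $\mathbb{R}$, hyperbolic over $\mathbb{C}$), and Pfister forms over $F$ split by $K$ are typically anisotropic over $F$. Your fallback (``exhibit an explicit element of norm $1$'') is left unfinished. The clean repair uses only normalization: pick any $x\in\SSym(B,\tau)^0$ with $n(x)\neq 0$ (such $x$ exists since $b_n$ is nonsingular and the space is nonzero); then $n(x\star x)=n(x)^2$, so $u=n(x)^{-1}(x\star x)$ satisfies $n(u)=1$. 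With that substitution, the appeal to \cite[(33.18), (33.29)]{KMRT} gives the $3$-fold Pfister conclusion as you intended.
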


Normalized symmetric compositions as in Proposition~\ref{prop:Okubo2}
are also called Okubo compositions. The automorphism group of an Okubo
composition associated to a unitary involution $\tau$ on a central
simple $K$-algebra $B$ is $\gPGU(B,\tau)$. Therefore, these
compositions are also called \emph{symmetric compositions of
  type~${}^2\!\mathrm{A}_2$}.

\begin{theorem} \label{thm:typeAtwo}
  Let $(S,n)$ be a $3$-fold Pfister quadratic space over a field $F$
  of characteristic  different from~$3$. Let $\omega$ be a primitive
  cube root of unity in a separable closure of $F$.
    \begin{enumerate}
     \item
        If $\omega\in F$, then every
        normalized symmetric composition of type~II on $(S,n)$ is
        isomorphic to the Okubo composition associated to some central
        simple $F$-algebra of degree~$3$, uniquely determined up to
        isomorphism. Such compositions exist if and only if $n$ is
        hyperbolic. 
    \item
        If $\omega\notin F$, let $K=F(\omega)$.
        Every normalized symmetric composition of type~II on $(S,n)$
        is isomorphic to the Okubo composition associated to some
        central simple $K$-algebra of degree~$3$ with unitary
        involution, uniquely determined up to isomorphism. Such
        compositions exist if and only if $n$ is split by $K$.
    \end{enumerate}
  \end{theorem}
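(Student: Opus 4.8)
The plan is a Galois-descent argument, reducing the statement to the classification of Okubo algebras contained in \cite[\S34]{KMRT}. Let $\diamond$ be a normalized symmetric composition of type~II on $(S,n)$; by definition and Theorem~\ref{thm:petersson} it is a form of the split Petersson composition $\star_\pi$. Since $\charac F\neq3$ the automorphism group scheme $\gAut(\star_\pi)$ is smooth---it is an adjoint group of type~$\mathrm{A}_2$---so the torsor $\Iso(\diamond,\star_\pi)$, being nonempty over an algebraic closure, acquires a point already over a separable closure; thus isomorphism classes of normalized type~II compositions on a space $F_{\mathrm{sep}}$-isomorphic to the underlying space $(\,\cdot\,,n_0)$ of $\star_\pi$ are parametrized by $H^1\bigl(F,\gAut(\star_\pi)\bigr)$, where $n_0$ denotes the (hyperbolic) norm of $\star_\pi$.

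The crux is the identification of $\gAut(\star_\pi)$ over $F$ (equivalently, by Theorem~\ref{thm:fixedpoints}, the subgroup of $\gPGO^+(n_0)$ fixed by $\rho_{\star_\pi}$), which I would take from \cite[\S34]{KMRT}. Over $F$ the split Petersson composition is the split Okubo composition: the one of \eqref{eq:Okubo} attached to $M_3(F)$ when $\omega\in F$, with $\gAut(\star_\pi)\cong\gPGL_3$, and the one attached to $\bigl(M_3(K),\tau_h\bigr)$ for a hyperbolic hermitian form when $K=F(\omega)$, with $\gAut(\star_\pi)\cong\gPGU(M_3(K),\tau_h)$. Hence $H^1\bigl(F,\gAut(\star_\pi)\bigr)$ is the set of isomorphism classes of central simple $F$-algebras of degree~$3$ in the first case, and of central simple $K$-algebras of degree~$3$ with a $K/F$-unitary involution in the second---the outer-form structure of $\gAut(\star_\pi)$ pinning down the extension $K$. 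Tracing a cocycle through \eqref{eq:Okubo}--\eqref{eq:normOkubo} shows that twisting $\star_\pi$ by the torsor of $A$, resp.\ of $(B,\tau)$, returns the Okubo composition of Proposition~\ref{prop:Okubo1}, resp.\ \ref{prop:Okubo2}; this yields both that every normalized type~II composition is an Okubo composition and that its underlying algebra-with-structure is determined up to isomorphism. Establishing that $\gAut(\star_\pi)$ is connected and equal, \emph{over $F$}, to the stated group---so that the two sub-cases correspond to an inner versus a $K$-quasi-split outer form of $\mathrm{A}_2$---is the step I expect to cost the most effort; the remainder is formal Galois cohomology of unitary groups, cf.\ \cite[\S29--30]{KMRT}.

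Finally I would settle which $3$-fold Pfister spaces occur. Necessity follows from Propositions~\ref{prop:Okubo1} and~\ref{prop:Okubo2}: for $\omega\in F$ the norm of an Okubo composition is hyperbolic, and for $\omega\notin F$ it is a $3$-fold Pfister form hyperbolic over $K$ (the latter also by the case $\omega\in F$ applied over $K$). For sufficiency, when $\omega\in F$ and $n$ is hyperbolic each $(A^0,n)$ is hyperbolic, hence isometric to $(S,n)$, so every degree-$3$ algebra occurs; when $\omega\notin F$ and $n$ is split by $K$ one must exhibit a degree-$3$ central simple $K$-algebra with unitary involution whose associated Pfister form is $n$, i.e.\ prove that $(B,\tau)\mapsto\bigl(\SSym(B,\tau)^0,n\bigr)$ surjects onto the $3$-fold Pfister forms split by $K$. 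One can do this by writing $n\cong\langle\!\langle d\rangle\!\rangle\tens p$ with $K=F(\sqrt d)$ (or its Artin--Schreier analogue in characteristic~$2$) and realizing the $2$-fold Pfister form $p$ through a suitable rank-$3$ hermitian form over $K/F$, or cohomologically by computing the image of $H^1\bigl(F,\gAut(\star_\pi)\bigr)$ under the norm-form map to $H^1\bigl(F,\gPGO(n_0)\bigr)$. This last surjectivity---the compatibility of the cohomological bookkeeping for the composition with that for its norm---is the second, more technical, ingredient, and once it and \cite[\S34]{KMRT} are in hand the theorem follows by organizing the pieces.
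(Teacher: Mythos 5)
The paper gives no argument for this theorem, only citations to Elduque--Myung and to \cite[(34.37)]{KMRT}, so there is no in-paper proof to compare against line by line; your Galois-descent sketch is a sound reconstruction of the framework those references use. The shape is right: smoothness of $\gAut(\star_\pi)$ in characteristic $\neq 3$ lets you pass from the algebraic to the separable closure, so that $H^1\bigl(F,\gAut(\star_\pi)\bigr)$ classifies $F$-forms of the triple (vector space, quadratic form, composition), and identifying $\gAut(\star_\pi)$ over $F$ as $\gPGL_3$ (if $\omega\in F$) or as the quasi-split unitary group of $\bigl(M_3(K),\tau_h\bigr)$ (if $\omega\notin F$) converts that $H^1$-set into degree-$3$ central simple $F$-algebras, respectively degree-$3$ central simple $K$-algebras with unitary involution. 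You correctly flag the two places where the real content lies: (i) that over a field without $\omega$ the split Petersson composition is the Okubo composition of $(M_3(K),\tau_h)$ rather than of $M_3(F)$ --- this is precisely what makes $\gAut(\star_\pi)$ an \emph{outer} form of $\mathrm{A}_2$ attached to the particular quadratic extension $K$, and it is not merely formal; and (ii) that $H^1\bigl(F,\gAut(\star_\pi)\bigr)$ also varies the quadratic form, so to get the statement on a fixed $(S,n)$ and the ``exists iff hyperbolic / split by $K$'' clauses one must analyse the fibers of the induced map to $H^1$ of the orthogonal group of $n_0$. One small slip: the relevant target for that last map is $H^1$ of the orthogonal group (isometry classes of forms of $n_0$), not of $\gPGO(n_0)$, since the theorem fixes $n$ up to isometry rather than similarity. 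Both (i) and (ii) are exactly what \cite[(34.19), (34.25), (34.35)--(34.37)]{KMRT} supply in explicit form, so your sketch matches the cited approach rather than offering a genuinely different route.
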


\begin{proof}
    See Elduque-Myung~\cite[p.~2487]{EM93} or 
    \cite[(34.37)]{KMRT}.
\end{proof}

\medskip

Now, suppose the characteristic of $F$ is~$3$.
Normalized symmetric compositions of type~II over $F$ are extensively
discussed in \cite{EP:96}, 
\cite{elduque:97}, \cite{elduque:99} and
\cite{elduque:2000a}.  They can be viewed as a specialization of symmetric
compositions of type II over fields containing a primitive cube root
of unity. To introduce them we first consider symbol algebras of degree~$3$. 
Any central simple algebra $A$ of degree $3$
over a field containing a cubic primitive root of unity $\om$ is
a symbol algebra, i.e., $A$ has generators $x$, $y$ such that $ x^3 =
a$, $y ^3 =
b$ and $yx = \om xy $ for some $a$, $b \in F^\times$.  We consider the
Okubo composition $\star$ associated with
$A$.  The following basis of $A^0$:
\begin{equation}\label{equ:basisalgebra}
  \begin{array}{llllllll}
    e_1 = x & e_2 = y & e_3 = \om^2xy & e_4 = \om xy \inv \\
    f_1 = x\inv & f_2 = y\inv & f_3 = \om^2x\inv y\inv & f_4 = \om x\inv y
  \end{array}
\end{equation}
is hyperbolic for the form $n$ and the multiplication table with
respect to this basis of the product $\star$ is given by\\[-2ex]
\begin{equation*} 
{\begin{array}{c|cc|cc|cc|cc}
      &          e_1   &    f_1 &        e_2 &    f_2  &     e_3 &     f_3 &     e_4  & f_4  \\
      \hline
      e_1   &  a f_1 &   0                   & 0 & -e_4  &  0 &  -f_2   & -af_3   & 0   \\
      f_1    &  0       &    a\inv e_1  & - f_4 &  0    &  -e_2 &  0  & 0  & -a\inv e_3 \\
      \hline
      e_2   &  -e_3   &    0      &   b f_2& 0         & -b e_4 &    0       &   -e_1  & 0           \\
      f_2   &   0        &    -f_3    &   0 &  b\inv e_2 &0   & - b\inv f_4& 0 &  -f_1      \\
      \hline
      e_3   &  -a f_4   &    0          &   0 & -e_1 & abf_3 & 0 & 0 & -  b f_2 \\
      f_3   &  0   & - a\inv  e_4    & -f_1 & 0  &  0 & a\inv b \inv e_3   &  - b\inv e_2 & 0 \\
      \hline
      e_4   &   0   &   - f_2  & 0      &      -b\inv e_3 &  -af_1 &   0&    a\inv b f_4 &  0          \\
      f_4 & -e_2 & 0 & -bf_3 & 0& 0 & - a\inv e_1 &0 & ab\inv e_4
    \end{array} }
\end{equation*}

This multiplication table does not involve
$\om$, hence specialising gives  a symmetric
 composition $\dia_{a,b}$ over arbitrary
fields, also of characteristic $3$.

\begin{prop}
  1) The symmetric composition $\dia_{1,1}$ is isomorphic to the split
  Petersson composition.\\
  2) Let $\overline F$ be an algebraic closure of $F$ and let $c$, $d
  \in \overline F$ be such that $c^3 = a$ and $d^3 =b$.  The symmetric
  compositions $\dia_{a,b}$ and $\dia_{1,1}$ are isomorphic over $F(c,d)$.
  Thus the symmetric composition $\dia_{a,b}$ splits over
  $F(c,d)$.
\end{prop}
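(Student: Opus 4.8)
The strategy is to treat the two parts separately, reducing each to the known description of Okubo compositions over a field of characteristic different from~$3$ containing a primitive cube root of unity~$\om$. For~(1), over such a field, say $\mathbb{Q}(\om)$, the symbol algebra with $a=b=1$ is the split algebra $M_3$ (realize $x$ as a diagonal matrix with eigenvalues $1$, $\om$, $\om^2$ in a suitable order and $y$ as a cyclic permutation matrix), so by the remark following Proposition~\ref{prop:Okubo1} the Okubo composition attached to it is isomorphic to the split Petersson composition; and in the basis \eqref{equ:basisalgebra} that Okubo composition is, by construction, $\dia_{1,1}$. Hence $\dia_{1,1}$ is isomorphic to the split Petersson composition over $\mathbb{Q}(\om)$. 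The substantive point is to descend this to an arbitrary base field, where in characteristic~$3$ there is no primitive cube root of unity to fall back on. For that I would use that $\dia_{1,1}$ (through its multiplication table with $a=b=1$, whose structure constants are $0$ and $\pm1$) and the split Petersson composition $\star_\pi$ of Theorem~\ref{thm:petersson} (through the Zorn algebra and the permutation~$\pi$) are both defined over $\mathbb{Z}$, on the same hyperbolic $8$-dimensional quadratic space; it then suffices to exhibit a single linear map with coefficients in the prime field intertwining the two multiplication tables, which is a direct, if tedious, verification. Equivalently, one may invoke the uniqueness up to isomorphism of the split pseudo-octonion algebra over an arbitrary field, \cite{EP:96}. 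I expect this descent, and in particular the characteristic~$3$ case, to be the only delicate point in~(1).

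For~(2), I would introduce formal indeterminates $t$, $s$, work over $R=\mathbb{Z}[t^{\pm1},s^{\pm1}]$ and its overring $R'=\mathbb{Q}(\om)(t,s)$, and set $a=t^3$, $b=s^3$. Over $R'$ the composition $\dia_{a,b}$ is the Okubo composition on the symbol algebra $(t^3,s^3)_\om$ with standard generators $x$, $y$. Put $\xi=t^{-1}x$ and $\eta=s^{-1}y$; then $\xi^3=\eta^3=1$ and $\eta\xi=\om\xi\eta$, so $\xi$, $\eta$ are standard generators of a copy of $(1,1)_\om$, and $\xi\mapsto t^{-1}x$, $\eta\mapsto s^{-1}y$ defines an algebra isomorphism $\phi\colon(1,1)_\om\isom(t^3,s^3)_\om$ over $R'$. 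Since the Okubo product \eqref{eq:Okubo} and the norm \eqref{eq:normOkubo} are functorial in the algebra, $\phi$ restricts to an isomorphism of the symmetric compositions on the trace-zero parts, i.e., after identifying these with the underlying space of $\dia_{1,1}$, resp. of $\dia_{a,b}$, through the basis \eqref{equ:basisalgebra}, an isomorphism $\dia_{1,1}\isom\dia_{a,b}$ over $R'$.

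The key observation is that, written out on the bases \eqref{equ:basisalgebra}, $\phi$ is diagonal: a short computation using $\phi(\xi)=t^{-1}x$, $\phi(\xi\eta)=(ts)^{-1}xy$, $\phi(\xi\eta^{-1})=t^{-1}s\,xy^{-1}$, and so on, shows that $\phi$ multiplies $e_1$, $e_2$, $e_3$, $e_4$, $f_1$, $f_2$, $f_3$, $f_4$ by $t^{-1}$, $s^{-1}$, $(ts)^{-1}$, $t^{-1}s$, $t$, $s$, $ts$, $ts^{-1}$ respectively. In particular all entries of the matrix of $\phi$ lie in $R$, as do the structure constants of $\dia_{1,1}$ (namely $0$, $\pm1$) and of $\dia_{a,b}$ (Laurent monomials in $t^3$, $s^3$). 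The assertion that this fixed matrix intertwines $\dia_{1,1}$ and $\dia_{a,b}$ is therefore a finite list of identities in $R$; having been checked over $R'$ and $R$ being a subring of $R'$, it holds over $R$, hence over every $R$-algebra. Specializing $t\mapsto c$, $s\mapsto d$ yields the isomorphism $\dia_{1,1}\isom\dia_{a,b}$ over $F(c,d)$ for \emph{every} base field~$F$, including those of characteristic~$3$. Combined with~(1) applied over $F(c,d)$, this shows that $\dia_{a,b}$ becomes the split Petersson composition over $F(c,d)$, i.e., splits there. Apart from the bookkeeping of the basis \eqref{equ:basisalgebra} under~$\phi$, there is no difficulty in~(2).
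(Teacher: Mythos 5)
Your argument is correct and, at bottom, the same as the paper's one-line justification (``comparing multiplication tables''); what you add is a useful way to \emph{organize} the table comparison so that the characteristic~$3$ case comes for free. For~(2) in particular, the device of working over $R=\gZ[t^{\pm1},s^{\pm1}]\subset R'=\mathbb{Q}(\omega)(t,s)$ with $a=t^3$, $b=s^3$, checking that the diagonal map $e_i\mapsto\lambda_ie_i$, $f_i\mapsto\lambda_i^{-1}f_i$ with $\lambda_1=t^{-1}$, $\lambda_2=s^{-1}$, $\lambda_3=(ts)^{-1}$, $\lambda_4=t^{-1}s$ intertwines $\dia_{1,1}$ and $\dia_{a,b}$ as a polynomial identity in $R$, and then specializing $t\mapsto c$, $s\mapsto d$, is a clean and rigorous way to avoid any separate argument in characteristic~$3$. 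The computed scalings are correct (e.g.\ $\phi(\om^2\xi\eta)=(ts)^{-1}e_3$, $\phi(\om\xi\eta^{-1})=t^{-1}s\,e_4$), and the fact that $\phi$ preserves $n$ is automatic from the pairing $b_n(e_i,f_j)=\delta_{ij}$.

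Two small caveats. First, for~(1) the detour through $\mathbb{Q}(\omega)$ and $M_3$ is not load-bearing: since an isomorphism defined over $\mathbb{Q}(\omega)$ need not descend, your proof still ultimately rests on exhibiting (or at least asserting the existence of) an explicit change of basis over the prime field identifying the two multiplication tables, which is exactly what the paper calls ``comparing multiplication tables''; you state this but do not carry it out. Second, the fallback ``invoke the uniqueness up to isomorphism of the split pseudo-octonion algebra over an arbitrary field'' should be phrased with care: in characteristic~$3$ there exist \emph{non}-split Okubo compositions with hyperbolic norm form (precisely the $\dia_{a,b}$ with $a$ or $b$ a non-cube), so ``hyperbolic norm'' alone does not force isomorphism with the split pseudo-octonion algebra, and some further input (associated symbol trivial, or an explicit isomorphism) is needed. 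Since your primary argument is the explicit table comparison, this does not affect correctness, but the citation as a shortcut would need a more precise statement.
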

\begin{proof}
  1) and 2) follow by comparing multiplication tables.
\end{proof}

\begin{theorem} \label{thm:classsymIII}
  Let $(S,n)$ be a $3$-fold Pfister quadratic space over a field $F$
  of characteristic~$3$. Every normalized symmetric composition of
  type~II on $(S,n)$ is isomorphic to a symmetric composition of the
  form $\dia_{a,b}$ for some  $a$, $b \in F^\times$.
\end{theorem}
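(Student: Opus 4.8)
The plan is to realize an arbitrary type~II composition in characteristic~$3$ as a twisted $F$-form of the split Petersson composition $\dia_{1,1}$ and to show, by the characteristic-$3$ structure theory of Okubo algebras, that the attached torsor is induced from the infinitesimal diagonal subgroup $\gmu_3\times\gmu_3$ of $\gAut(\dia_{1,1})$ coming from the fine grading; Kummer theory for $\gmu_3$ then manufactures the pair $(a,b)$.

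To set things up, let $\star$ be a normalized symmetric composition of type~II on $(S,n)$ with $\charac F=3$. By definition $\star_{\overline F}\cong\dia_{1,1}$, so the pair $\bigl((S,n),\star\bigr)$ is an $F$-form of $\bigl((\mathfrak Z,n),\dia_{1,1}\bigr)$ and is classified by a class $\xi\in H^1_{\mathrm{fppf}}\bigl(F,\gAut(\dia_{1,1})\bigr)$; the flat topology is forced here because $\gAut(\dia_{1,1})$ is not smooth in characteristic~$3$ (cf.\ the introduction, Theorem~\ref{thm:fixedpoints}, and \cite{EP:96}). The basis \eqref{equ:basisalgebra} of $(\mathfrak Z,\dia_{1,1})$, in which the relation $yx=\om xy$ specializes to $yx=xy$, displays a fine grading of $\dia_{1,1}$ by $(\mathbf Z/3)^2$ with one-dimensional homogeneous components $F\cdot x^iy^j$; the automorphisms that scale $x$ and $y$ by cube roots of unity form a subgroup $\gmu_3\times\gmu_3\hookrightarrow\gAut(\dia_{1,1})$. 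I would check, directly from the multiplication table preceding the statement, that for $a,b\in F^\times$ the composition $\dia_{a,b}$ is the form of $\dia_{1,1}$ obtained by fppf descent along the (purely inseparable, faithfully flat) cover $F(a^{1/3},b^{1/3})$ with descent datum given by the rescalings $x\mapsto a^{1/3}x$, $y\mapsto b^{1/3}y$; in other words $[\dia_{a,b}]$ is the image of $\bigl(aF^{\times3},bF^{\times3}\bigr)$ under the natural map $H^1_{\mathrm{fppf}}(F,\gmu_3\times\gmu_3)\to H^1_{\mathrm{fppf}}\bigl(F,\gAut(\dia_{1,1})\bigr)$, where $H^1_{\mathrm{fppf}}(F,\gmu_3)\cong F^\times/F^{\times3}$ via the Kummer sequence $1\to\gmu_3\to\gGm\xrightarrow{3}\gGm\to1$, which is exact for the flat topology in every characteristic (and $H^1_{\mathrm{fppf}}(F,\gGm)=\mathrm{Pic}(F)=0$). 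In particular $\dia_{a,b}$ is always of type~II, which also recovers the splitting statement of the preceding proposition.

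With this in hand, the theorem is precisely the assertion that $H^1_{\mathrm{fppf}}(F,\gmu_3\times\gmu_3)\to H^1_{\mathrm{fppf}}\bigl(F,\gAut(\dia_{1,1})\bigr)$ is onto: every type~II composition in characteristic~$3$ admits a fine $(\mathbf Z/3)^2$-grading defined over $F$, equivalently a pair of symbol generators $x,y\in S$ over $F$ reproducing the table \eqref{equ:basisalgebra} for suitable $a,b\in F^\times$. This is the exact counterpart in characteristic~$3$ of the classical fact that over a field containing a primitive cube root of unity every central simple algebra of degree~$3$ is a symbol algebra (compare Theorem~\ref{thm:typeAtwo}(1)), and it is the content of the classification of Okubo algebras in characteristic~$3$; for it I would invoke Elduque and P\'erez \cite{EP:96} (see also \cite{elduque:97}, \cite{elduque:99}, \cite{elduque:2000a}). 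Granting this reduction, everything else is the Kummer-theory bookkeeping sketched above together with the comparison of multiplication tables, and only surjectivity --- not a classification of the $\dia_{a,b}$ up to isomorphism --- is needed. The one genuine obstacle, and the place where characteristic~$3$ is essential (it is exactly what makes $\gAut(\dia_{1,1})$ non-smooth), is establishing that the $\gAut(\dia_{1,1})$-torsor of $\star$ reduces to the diagonal $\gmu_3\times\gmu_3$.
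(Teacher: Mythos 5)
Your proposal is correct in outline, but it takes a route that is genuinely different from the paper's, so a comparison is in order. The paper's proof is a one-line citation: it invokes the classification of Okubo algebras in characteristic~$3$ from \cite[p.~291]{elduque:99} (a concrete, generators-and-relations description) and matches Elduque's tables against the table defining $\dia_{a,b}$. You instead wrap the problem in fppf descent: $\star$ is a torsor twist of $\dia_{1,1}$ under the (non-smooth) group scheme $\gAut(\dia_{1,1})$, the $(\gZ/3)^2$-grading embeds $\gmu_3\times\gmu_3\hookrightarrow\gAut(\dia_{1,1})$, the classes $\dia_{a,b}$ are exactly the images of $(a,b)\in (F^\times/F^{\times3})^2$ under $H^1_{\mathrm{fppf}}(F,\gmu_3^2)\to H^1_{\mathrm{fppf}}\bigl(F,\gAut(\dia_{1,1})\bigr)$, and the theorem becomes surjectivity of this map. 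That reformulation is sound: the spreading-out argument (an $\overline F$-point of the finite-type $\Iso$-scheme descends to a finite extension) makes $H^1_{\mathrm{fppf}}$ the right classifying object, the Kummer identification is standard, and the correspondence $(a,b)\leftrightarrow\dia_{a,b}$ is precisely the content of the paper's preceding proposition asserting $\dia_{a,b}\cong\dia_{1,1}$ over $F(a^{1/3},b^{1/3})$.

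What your approach buys is a conceptual account of \emph{why} the parameters $(a,b)$ live in $F^\times/F^{\times3}$ and \emph{why} the non-smoothness of $\gAut(\dia_{1,1})$ forces the flat topology; it also isolates cleanly the one nontrivial step --- reduction of the torsor to the diagonalizable subgroup $\gmu_3\times\gmu_3$, i.e.\ existence of a fine $(\gZ/3)^2$-grading over $F$ --- and correctly remarks that only surjectivity, not injectivity, is needed. What it does not buy is a shortcut around that step: like the paper, you ultimately defer the surjectivity to Elduque's structure theory (\cite{EP:96}, \cite{elduque:99}). So both proofs are, at bottom, citations to the same body of work; the paper's is shorter and computational (compare multiplication tables), yours is longer and structural. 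If you wanted to make your version self-contained you would have to actually extract from \cite{elduque:99} the statement that every Okubo algebra in characteristic~$3$ possesses a pair of symbol generators over the base field, and verify that this is exactly a reduction of structure group to $\gmu_3\times\gmu_3$; as written, that translation is asserted rather than carried out, which puts your proof at the same level of completeness as the paper's.
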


\begin{proof}
  The claim follows from~\cite[p. 291]{elduque:99} and a comparison of 
  multiplication tables.
\end{proof}

Conditions for isomorphism $\dia_{a,b} \isom
\dia_{a',b'}$ are discussed 
in~\cite{elduque:99}.

\medskip

The automorphism groups (as group schemes) of the compositions
$\dia_{a,b}$ are not smooth in 
characteristic $3$ and are, as far as we know, not studied in the
literature.  Their groups of rational points are described in
\cite{elduque:99}.
 
Observe that symmetric compositions of type~II in characteristic~$3$
are not necessarily split over separably 
closed fields, in contrast to symmetric compositions of type~I and to
symmetric compositions of type~II in characteristic different from~$3$.


\end{document}